\titleclass{\subsubsubsection}{straight}[\subsection]
\newcounter{subsubsubsection}[subsubsection]
\renewcommand\thesubsubsubsection{\thesubsubsection.\arabic{subsubsubsection}}
\renewcommand\theparagraph{\thesubsubsubsection.\arabic{paragraph}} 
\newcommand{\RN}[1]{%
  \textup{\uppercase\expandafter{\romannumeral#1}}%
}
\newtheorem{Theorem}{Theorem}[section]
\newtheorem{Corollary}{Corollary}[section]
\newtheorem{Lemma}{Lemma}[section]
\newtheorem{Definition}{Definition}[section]
\newtheorem{Remark}{Remark}[section]
\newtheorem{Notation}{Notation}[section]
\definecolor{thistle}{rgb}{0.85, 0.75, 0.85}
\definecolor{blue-green}{rgb}{0.0, 0.87, 0.87}
\def\bF{\mathbb{F}}
\def\Fq{\mathbb{F}_q}
\def\GL{\mathrm{GL}}
\def\PG{\mathrm{PG}}
\def\PGL{\mathrm{PGL}}
\definecolor{amber(sae/ece)}{rgb}{1.0, 0.49, 0.0}
	\definecolor{darkcyan}{rgb}{0.0, 0.55, 0.55}
	\definecolor{darkseagreen}{rgb}{0.56, 0.74, 0.56}
	\definecolor{salmon}{rgb}{1.0, 0.55, 0.41}
\definecolor{carminepink}{rgb}{0.92, 0.3, 0.26}
	\definecolor{electricyellow}{rgb}{1.0, 1.0, 0.0}
	\definecolor{chromeyellow}{rgb}{1.0, 0.65, 0.0}
	\definecolor{babyblue}{rgb}{0.54, 0.81, 0.94}
 \def\bF{\mathbb{F}}
\def\Fq{\mathbb{F}_q}
\def\GL{\mathrm{GL}}
\def\F2h{\mathbb{F}_{2^h}}
\def\PG{\mathrm{PG}}
\def\PGL{\mathrm{PGL}}
\def\cL{\mathcal{L}}
\def\cV{\mathcal{V}}
\def\cZ{\mathcal{Z}}
\def\cC{\mathcal{C}}
\def\cH{\mathcal{H}}
\def\cH{\mathcal{H}}
\title{A classification of planes intersecting the Veronese surface over finite fields of even order}
\author{Nour Alnajjarine\footnote{Sabanc\i~University, Istanbul, Turkey; \texttt{nour@sabanciuniv.edu}} \;  and \;
Michel Lavrauw\footnote{Sabanc\i~University, Istanbul, Turkey; \texttt{mlavrauw@sabanciuniv.edu}} }
\begin{document}
\maketitle
\begin{abstract}

In this paper we contribute towards the classification of partially symmetric tensors in $\bF_q^3\otimes S^2\bF_q^3$, $q$ even, by classifying planes which intersect the Veronese surface $\cV(\Fq)$ in at least one point, under the action of $K\leq \PGL(6,q)$, $K\cong \PGL(3,q)$, stabilising the Veronese surface. We also determine a complete set of geometric and combinatorial invariants for each of the orbits.\\\par
\textbf{Keywords:}\hspace{0.2cm}Veronese Surface,\hspace{0.2cm} Tensors,\hspace{0.2cm} Ranks,\hspace{0.2cm} Nets of Conics,\hspace{0.2cm}  Finite Fields.

\end{abstract}

\section{Introduction}

Denote by $W = S^n \mathbb{F}^m$ the subspace of symmetric tensors in the $n$-fold tensor product space $V=\mathbb{F}^m\otimes...\otimes \mathbb{F}^m$ defined over a field $\mathbb{F}$. 
Decomposing tensors is a notoriously difficult problem, and even more so is classifying tensors up to (natural) equivalence. 

A tensor  $A\in \bF^3\otimes \bF^3\otimes \bF^3$ has a first contraction space  $C_1(A)\leq \bF^3\otimes \bF^3$, and $A$ is called {\it partially symmetric} if $C_1(A)\leq S^2\bF^3$, in other words $A\in \bF^3\otimes S^2\bF^3$.
In this paper we classify certain types of partially symmetric tensors in $\bF^3\otimes S^2\bF^3$ over finite fields $\bF$ of even order. The original motivation for studying 3-fold tensor products over finite fields comes from the theory of semifields and non-Desarguesian projective planes, see \cite{Lavrauw2011}.
The geometric approach which is used in this paper was originated in previous work by the second author and J. Sheekey \cite{canonical}, in which tensors in $\bF^2\otimes \bF^3\otimes \bF^3$ were classified, and by the second author and T. Popiel \cite{lines}, in which it was determined which line-orbits in $\PG(\bF^3\otimes \bF^3)$, arising from this classification, have a symmetric representation. Although a complete classification of tensors in $\bF^3\otimes \bF^3\otimes \bF^3$ seems, at this point in time, out of reach, some progress has been made by reversing the previous approach, i.e. by starting with the study of partially symmetric representations of tensors in $\bF^3\otimes \bF^3\otimes \bF^3$. Classifying such tensors is equivalent to classifying orbits of subspaces of $\PG(5,q)$ under the group $K\cong \PGL(3,q)$ leaving the Veronese surface $\cV(\bF_q)$ invariant. The action of $K$ can be obtained by lifting the natural action of $\PGL(3,q)$ on $\PG(2,q)$ through the Veronese map from $\PG(2,q)$ to $\PG(5,q)$ (see e.g. Harris \cite[Lecture 10]{harris}). It is known that $K$ is the setwise stabiliser of $\cV(\bF_q)$ in $\PGL(6,q)$ unless $q=2$. 
If $q=2$, then $\PGL(3,2)$ is strictly contained in the setwise stabiliser of $\cV(\mathbb{F}_2)$ equivalent to $\text{Sym}_7$. 
\par 

For all $q$, points, lines, solids and hyperplanes are completely classified in $\PG(5,q)$. Indeed, $K$-orbits of points and hyperplanes are easily determined (see Section \ref{pre}), lines were classified in \cite{lines}, and solids in \cite{solidsqeven}. Moreover, planes in $\PG(5,q)$ intersecting the Veronese surface $\cV(\Fq)$ in at least one point, $q$ odd, are classified in \cite{nets,LaPoSh2021}.

Therefore, to  complete the
classification of subspaces of $\PG(5,q)$, we need to classify planes which are disjoint from $\cV(\Fq)$ for all $q$ and planes meeting $\cV(\Fq)$ for $q$ even. In this paper, we investigate the latter classification by considering cubic curves associated with planes in $\PG(5,q)$. In
particular, we compute for each (type of) plane $\pi \subseteq \PG(5,q)$ its point-orbit distribution represented by the $4$-tuple $[r_1,r_{2n},r_{2s},r_3]$, where $r_i$ is the number of rank-$i$ points in $\pi$ for $i \in\{1,3\}$, $r_{2n}$ is the number of rank-$2$ points in $\pi$ meeting the nucleus plane $\mathcal{N}$ and $r_{2s}$ is the number of the remaining rank-$2$ points in $\pi$. In general, we distinguish between orbits using point-orbit distributions, line-orbit distributions and inflexion points defined in Section  \ref{pre}. While some of the arguments used in our classification come from the classification of planes meeting $\cV(\Fq)$, $q$ odd  \cite{nets}, a unified proof for all $q$ seems impossible due to the fundamental differences in the geometry of the Veronese surface in even characteristic.

Historically, the study of subspaces in the space of the Veronese surface began more than a century ago, through the study of linear systems of conics, which can also be interpreted as the classification of $K$-orbits of subspaces of $\PG(5,\bF)$.
More precisely, classifying solids, planes and lines of $\PG(5,\mathbb{F})$  correspond to classifying {\it pencils}, {\it nets} and {\it webs} of conics, namely $1$-, $2$- and $3$-dimensional systems, respectively. This problem was completely solved over $\mathbb{R}$ and $\mathbb{C}$ by Jordan and Wall who classified pencils and nets in \cite{jordan1,jordan2,wall}. We also refer to \cite{AbEmIa}, which includes an interesting historical account on nets of conics in the appendices A and B.

For finite fields, the narrative is more complicated. For pencils of conics, first studied by Dickson for $q$ odd, and later by Campbell for $q$ even, we refer to \cite{lines} and \cite{solidsqeven} for historical comments, and a discussion of the results. 
Nets over finite fields of odd order, were studied in 1914 by Wilson \cite{wilson}. For $q$ even, these nets were studied in 1928 by Campbell \cite{campbell2}. While both works definitely have their merits non of these gives a complete classification of nets of conics. For the odd characteristic case, we refer to \cite[Section 9]{nets} for an explanation of some of the shortcomings of Wilson's treatment. For the even characteristic case Zanella \cite{zanella} already pointed out an error in Campbell's work, by providing a construction of nets with $q^2+q+1$ non-singular conics for all $q$, disproving a statement in \cite{campbell2} that such nets only exist when $q \equiv 1\:\:(mod\:\: 3)$. Also, some of the arguments in \cite{campbell2} (such as \cite[pp. 482]{campbell2}) are based on Campbell's claim (see \cite{campbell}) that pencils with $q$ non-singular conics and a unique pair of conjugate imaginary lines do not exist, which is incorrect, see \cite[Section 7]{solidsqeven}. Another issue with Campbell's paper (and there are more) is the fact that some of the representatives of his “classes" actually represent non-equivalent nets, depending on a parameter. An example of this is the net represented by equation (32) on page 489 of \cite{campbell2}. As a general rule, when referring to \cite{campbell2}, one should keep in mind that it distinguishes a number of non-equivalent nets of conics, but it certainly does not determine the actual equivalence classes.

\par 

The paper is structured as follows. The proof of our main result, Theorem \ref{mainTheoremplanes}, is given in Sections~\ref{atleast3}--\ref{sigma14primeorbit}. In particular, we consider in Section \ref{atleast3} planes intersecting the Veronese surface $\cV(\Fq)$ in at least three points. In Section \ref{2rankone}, we classify  planes meeting $\cV(\Fq)$ in exactly two points. In Section \ref{qwe}, we discuss planes spanned by points of rank at most $2$ which meet $\cV(\Fq)$ in a unique point. Finally, we investigate in Section \ref{sigma14primeorbit} planes which are not spanned by points of rank at most $2$ and meet $\cV(\Fq)$ in a unique point. Note that, the case $q=2$ requires special treatment, and is handled in Section~\ref{q2planes}. We finish in Section \ref{comparison22} by a comparison with the similar classification over finite fields of odd characteristic. In particular, we prove that the correspondence, between nets of rank $1$ and planes meeting the Veronese surface, which holds for $q$ odd, fails when $q$ is even.

\begin{Theorem} \label{mainTheoremplanes} 
Let $q$ be an even prime power. 
There are exactly 15 orbits of planes having at least one rank-1 point in $\PG(5,q)$ under the induced action of $\PGL(3,q) \leqslant \PGL(6,q)$ defined in Section \ref{pre}. 
Representatives of these orbits are given in Table~\ref{tableplanes}, the notation of which is defined in Section \ref{pre}. 
\end{Theorem}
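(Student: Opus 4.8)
The plan is to classify the $K$-orbits of planes $\pi \subseteq \PG(5,q)$ with $\pi \cap \cV(\Fq) \neq \emptyset$ by stratifying according to the number of rank-$1$ points in $\pi$, i.e. $|\pi \cap \cV(\Fq)|$, and within each stratum organising the argument around the associated net of conics and its point-orbit distribution $[r_1,r_{2n},r_{2s},r_3]$. First I would set up the correspondence between planes of $\PG(5,q)$ and nets of conics in $\PG(2,q)$, recalling from Section~\ref{pre} the $K$-orbits of points (ranks $1,2,3$, with rank-$2$ points split by incidence with the nucleus plane $\mathcal{N}$) and of lines (from \cite{lines}), and noting that $K$ acts on $\cV(\Fq) \cong \PG(2,q)$ as $\PGL(3,q)$ in its natural action, so that configurations of rank-$1$ points in $\pi$ are governed by the possible configurations of $1$, $2$, $3$, or more points/a line/a conic in $\PG(2,q)$ up to projectivity.

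The core of the proof is then the case analysis carried out in Sections~\ref{atleast3}--\ref{sigma14primeorbit}. I would handle $|\pi \cap \cV| \geq 3$ first: here one can often pin down $\pi$ by choosing coordinates so that the rank-$1$ points are standard Veronese images (e.g. of a triangle, or of collinear points, or of points on a conic), then decompose the remaining freedom using the stabiliser in $K$ of that configuration, and identify the finitely many resulting planes, computing $[r_1,r_{2n},r_{2s},r_3]$, the line-orbit distribution, and the inflexion points of the associated cubic curve to separate orbits. The case $|\pi\cap\cV| = 2$ is treated next, then $|\pi\cap\cV|=1$ split into the subcase where $\pi$ is spanned by points of rank $\leq 2$ (Section~\ref{qwe}) and the subcase where it is not (Section~\ref{sigma14primeorbit}); in the latter the cubic-curve invariant becomes the main tool since the naive coordinate reductions run out. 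Throughout, one must (a) show every plane meeting $\cV$ is $K$-equivalent to one of the listed representatives in Table~\ref{tableplanes}, and (b) show the $15$ representatives are pairwise inequivalent, the latter being immediate once the tabulated invariants are shown to differ, except possibly for pairs sharing all listed invariants, which would require an ad hoc argument.

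The main obstacle I anticipate is exactly the even-characteristic geometry of $\cV(\Fq)$: the nucleus plane $\mathcal{N}$ and the fact that the conics have nuclei introduces phenomena with no odd-characteristic analogue (as the authors emphasise, and as reflected in the failure of the rank-$1$-nets correspondence discussed in Section~\ref{comparison22}), so the odd-$q$ arguments from \cite{nets} cannot simply be transcribed. Concretely, the delicate points will be: controlling how a plane through a single rank-$1$ point can interact with $\mathcal{N}$ and with the tangent lines/contact structure of $\cV$; ensuring the coordinate normalisations are valid in characteristic $2$ (no division by $2$, quadratic forms vs. bilinear forms behaving differently); and handling $q=2$ separately (Section~\ref{q2planes}), where small-field coincidences and the larger stabiliser $\mathrm{Sym}_7 \supsetneq \PGL(3,2)$ of $\cV(\mathbb{F}_2)$ can merge or split orbits. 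I expect the separation of orbits with equal point-orbit distributions — where one must bring in the finer line-orbit distribution and the number and configuration of inflexion points of the cubic curve — to be the most calculation-heavy part, but the genuinely hard conceptual step is the $|\pi\cap\cV|=1$, not-spanned-by-low-rank case, where establishing that the list is complete requires a careful argument rather than a direct normalisation.
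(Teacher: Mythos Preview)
Your overall plan matches the paper's approach closely: the same stratification by $|\pi\cap\cV|$, the same split of the one-point case into ``spanned by rank $\leq 2$'' versus ``not spanned'', the same toolkit of point-orbit distributions, line-orbit distributions, and inflexion points, and the same separate treatment of $q=2$.

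However, you have the difficulty located in the wrong place. The case $|\pi\cap\cV|=1$ with $\pi$ \emph{not} spanned by points of rank $\leq 2$ is in fact the easiest: one fixes a line of type $o_9$ through the rank-$1$ point, writes down a generic third generator, and observes that the associated cubic must be a triple line, which immediately forces the three free parameters to vanish and yields the single orbit $\Sigma_{15}$. The genuinely hard part is inside the \emph{spanned} case, specifically the configuration where $q_1\notin l_2\cup l_3$ and $\pi\cap\mathcal N=\emptyset$ (case $(d\text{-}ii)$ in Section~\ref{qwe}). There the planes $\pi_c$ depend on a parameter $c$, and showing that the $K$-orbits are exactly $\Sigma_{12},\Sigma_{13},\Sigma_{14}$ (plus the sporadic $\Sigma_{14}'$ at $q=4$) requires ideas you do not mention: Glynn's characteristic-$2$ Hessian to locate inflexion points of the cubic, a bijection between planes in $\Sigma_{14}$ and lines of type $o_{14}$ via their inflexion lines, and then a field-extension argument passing to $\bF_{q^2}$ or $\bF_{q^3}$ to reduce $\Sigma_{12}$ and $\Sigma_{13}$ to the $\Sigma_{14}$ analysis and match them with line orbits $o_{15}$ and $o_{17}$. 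This is where completeness of the list is delicate, not in the not-spanned case.

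A smaller point: you propose to ``set up the correspondence between planes of $\PG(5,q)$ and nets of conics''. The paper does not use this as a working tool, and in Section~\ref{comparison22} explicitly shows that the rank-$1$-nets correspondence (which drives the odd-$q$ classification in \cite{nets}) fails for $q$ even. The proof here works directly with the cubic curve $\mathscr C(\pi)$ cut out on $\pi$ by the secant variety, not via the dual net.
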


 \floatpagestyle{empty} 

\begin{table}[!htbp]
\begin{center}
\footnotesize{

\begin{tabular}[!htbp]{ l ll l} 
 \hline
$K$-orbits of planes & Representatives   &Point-orbit distributions &Conditions\\ \hline
\vspace{0.2cm}

$ \Sigma_1$& $\begin{bmatrix} x&y&.\\y&z&.\\.&.&.          \end{bmatrix}$ & $[q+1,1,q^2-1,0]$&  
 \\ \vspace{0.2cm}

$ \Sigma_2$&	$\begin{bmatrix} x&.&.\\.&y&.\\.&.&z          \end{bmatrix}$ & 
$[3,0,3q-3,q^2-2q+1]$&
\\ \vspace{0.2cm}

$\Sigma_{3}$  &$\begin{bmatrix} x&.&z\\.&y&.\\z&.&.        \end{bmatrix}$ &$[2,1,2q-2,q^2-q]$ &
 \\\vspace{0.2cm}
$\Sigma_{4}$  &$\begin{bmatrix} x&.&z\\.&y&z\\z&z&.        \end{bmatrix}$ &$[2,1,2q-2,q^2-q]$ &
\\ \vspace{0.2cm}

$\Sigma_5 $  & $\begin{bmatrix} x&.&z\\.&y&z\\z&z&z       \end{bmatrix}$ &  $[2,0,2q-2,q^2-q+1]$&
\\\vspace{0.2cm}

       $\Sigma_{6}$ & $\begin{bmatrix} x&.&.\\.&y+c z&z\\.&z&y        \end{bmatrix}$  & $[1,0,q+1,q^2-1]$&
       $Tr(c^{-1})=1$ \\\vspace{0.2cm}

  \vspace{0.2cm}

  $ \Sigma_7$&	$\begin{bmatrix} x&y&z\\y&.&.\\z&.&.          \end{bmatrix}$ & $[1,q+1,q^2-1,0]$&
   \\ \vspace{0.2cm}
   $ \Sigma_8$&	$\begin{bmatrix} x&y&.\\y&.&z\\.&z&.          \end{bmatrix}$ & $[1,q+1,q-1,q^2-q]$& 
   \\ \vspace{0.2cm}
   $ \Sigma_9$&	$\begin{bmatrix} x&y&.\\y&z&z\\.&z&.          \end{bmatrix}$ & $[1,1,2q-1,q^2-q]$&
   \\ \vspace{0.2cm}

 $\Sigma_{10}$ & $\begin{bmatrix} x&y&.\\y&z&.\\.&.&z          \end{bmatrix}$ &$[1,1,2q-1,q^2-q]$ &
 \\\vspace{0.2cm}

    $ \Sigma_{11}$& $\begin{bmatrix} x&y&.\\y&z&z\\.&z&x+z         \end{bmatrix}$	
     & $[1,1,q-1,q^2]$&
     \\ \vspace{0.2cm}
     
  $\Sigma_{12}$ &$\begin{bmatrix} x&y&c x\\y&y+z&.\\c x&.&c^2x+z \end{bmatrix}$  &$[1,0,q+1,q^2-1]$ &$Tr(c)=1 $,  $(*)$\\
  \vspace{0.2cm}

     $\Sigma_{13}$ & $\begin{bmatrix} x&y&cx\\y&y+z&.\\c x&.&c^2x+z \end{bmatrix}$ &$[1,0,q-1,q^2+1]$ &$Tr(c)=0$,  $(**)$ \\\vspace{0.2cm}

     $\Sigma_{14}$ & $\begin{bmatrix} x&y&c x\\y&y+z&.\\c x&.&c^2x+z \end{bmatrix}$ &$[1,0,q\mp 1,q^2\pm1]$ &$Tr(c)=Tr(1)$, $q\neq 4$, $(***)$ \\\vspace{0.2cm}

     $\Sigma_{14}'$ & $\begin{bmatrix} x+z&z&z\\z&y+z&z\\ z&z&y \end{bmatrix}$ &$[1,0,q- 1,q^2+1]$ &$q=4$ \\\vspace{0.2cm}

    $ \Sigma_{15}$&	$\begin{bmatrix} x&y&z\\y&z&.\\z&.&.         \end{bmatrix}$ & $[1,1,q-1,q^2]$&
    \\  \hline

   \end{tabular}}

 \caption{\label{tableplanes}$K$-orbits of planes in $\PG(5,q)$ meeting $\cV(\Fq)$ in at least one point and their point-orbit distributions, where $q\neq 2$ and $c$ is: $(*)$ not admissible if $q=2^{2m+1}$,  $(**)$ not admissible if $q=2^{2m}$ and $(***)$ admissible if $q>4$.  The point-orbit distribution in $\Sigma_{14}$ is given with respect to $q=2^{2m}$ and $q=2^{2m+1}$ respectively.    }

\end{center}
\end{table}

\section{Preliminaries}\label{pre}
In this section, we review some definitions and theory needed in our proofs, most of which can be found in \cite{berlekamp,glynn,harris,hirsch,lines}. 
We also refer the reader to Section 4.6 in \cite{galois geometry} for an overview of the interesting properties of the Veronese surface over finite fields. 
The {\it Veronese surface} $\cV(\bF_q)$ is a 2-dimensional algebraic variety in $\PG(5,q)$ defined as the image of the Veronese map
\[
\nu: \PG(2,q)\rightarrow \PG(5,q) \quad \text{given by} \quad
(u_0,u_1,u_2) \mapsto (u_0^2,u_0u_1,u_0u_2,u_1^2,u_1u_2, u_2^2).
\]
Alternatively, $\cV(\Fq)$ is the set of points  $(u_{00},u_{01},u_{02},u_{11},u_{12},u_{22})\in \PG(5,q)$ for which the rank of the symmetric matrix defined by $[u_{ij}]$ is $1$. 
We use the same notation as in \cite{nets,LaPoSh2021}, where a point $P=(y_0,y_1,y_2,y_3,y_4,y_5)$ of $\PG(5,q)$ is represented by a symmetric $3 \times 3$ matrix
\[
M_P=\begin{bmatrix} y_0&y_1&y_2\\
y_1&y_3&y_4\\
y_2&y_4&y_5  \end{bmatrix},
\]
and in which a dot represents a zero.
This representation can be extended to any subspace of $\PG(5,q)$. 
For example, the plane spanned by the first three points of the standard frame of $\PG(5,q)$ is represented by 
\begin{equation} \label{egSolid}
\begin{bmatrix} x&y&z\\y&\cdot&\cdot\\z&\cdot&\cdot \end{bmatrix}:=\left\{
\begin{bmatrix} x&y&z\\y&0&0\\z&0&0\end{bmatrix}:(x,y,z)\in \PG(2,q)
\right\}.
\end{equation}
The {\it rank} of a point $P\in\PG(5,q)$ is defined as  the rank of its associated matrix $M_P$. 
The points of rank $1$ are precisely those belonging to $\mathcal{V}(\Fq)$. 
Setting the determinant of the matrix representation of a plane $\pi$ in $\PG(5,q)$ equal to zero, defines a cubic curve $\mathscr{C}(\pi)$ in $\PG(2,q)$. The $\bF_q$-rational points on $\mathscr{C}(\pi)$ correspond to points of rank at most $2$ in $\pi$. 
Throughout the paper, the indeterminates of the coordinate rings of $\PG(2,q)$ and $\PG(5,q)$ are denoted by $(X,Y,Z)$ and $(Y_0,\ldots,Y_5)$ respectively, and $\cZ(f)$ denotes the zero locus of a form $f$. 

\begin{Notation}\label{groups}
We denote by $C_k$, $\operatorname{Sym}_k$ and $E_q$, a cyclic group of order $k$, a symmetric group on $k$ letters and, an elementary abelian group of order~$q$, respectively. Moreover, $A\rtimes B$ is a semidirect product with normal subgroup $A$ and subgroup $B$.
\end{Notation}

\subsection{Quadratic and cubic equations}

Over finite fields of characteristic 2, solutions of quadratic and cubic equations depend on the trace map $\operatorname{Tr}~:~\mathbb{F}_{2^h}\rightarrow \mathbb{F}_{2}$ defined by $$\operatorname{Tr}(x)=x+ x^2+x^{2^2}+...+x^{2^{h-1}}.
$$  
We will make frequent use of the following well-known lemma, see e.g. \cite{roots}.
\begin{Lemma}\label{quadratic}
The polynomial $f(X)=\alpha X^2 + \beta X + \gamma \in \mathbb{F}_{2^h}[X]$ with $\alpha \neq 0$ has exactly one root in $\mathbb{F}_{2^h}$ if and only if $\beta = 0$, two distinct roots in $\mathbb{F}_{2^h}$ if and only if $\beta \neq 0$ and $\operatorname{Tr}(\frac{\alpha \gamma}{\beta^2})=0$, and no roots in $\mathbb{F}_{2^h}$ otherwise.
\end{Lemma}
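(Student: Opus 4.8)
The plan is to reduce the general quadratic to the special form $T^2 + T + \delta = 0$ via the standard substitution that works in characteristic $2$, and then to invoke the additive (semilinear) nature of the Artin--Schreier map $\wp: x \mapsto x^2 + x$. First I would dispose of the case $\beta = 0$: since the Frobenius $x \mapsto x^2$ is a bijection on $\mathbb{F}_{2^h}$, the equation $\alpha X^2 + \gamma = 0$ has the unique solution $X = (\gamma/\alpha)^{2^{h-1}}$ (the square root of $\gamma/\alpha$), giving exactly one root. This handles the first clause.

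For $\beta \neq 0$, substitute $X = \frac{\beta}{\alpha} T$. Dividing through by $\beta^2/\alpha$, the equation $\alpha X^2 + \beta X + \gamma = 0$ becomes $T^2 + T + \frac{\alpha\gamma}{\beta^2} = 0$. Writing $\delta = \frac{\alpha\gamma}{\beta^2}$, it remains to show that $T^2 + T + \delta = 0$ has two distinct roots in $\mathbb{F}_{2^h}$ when $\operatorname{Tr}(\delta) = 0$ and none otherwise. Note the two roots, if they exist, are always distinct: if $T_0$ is a root then so is $T_0 + 1 \neq T_0$, and these are the only two since the polynomial has degree $2$. So the only real content is the solvability criterion.

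To prove solvability $\iff \operatorname{Tr}(\delta) = 0$: the map $\wp: \mathbb{F}_{2^h} \to \mathbb{F}_{2^h}$, $\wp(T) = T^2 + T$, is $\mathbb{F}_2$-linear with kernel $\{0,1\} = \mathbb{F}_2$, hence its image is an $\mathbb{F}_2$-subspace of index $2$, i.e. a hyperplane. On the other hand $\operatorname{Tr}: \mathbb{F}_{2^h} \to \mathbb{F}_2$ is a nonzero $\mathbb{F}_2$-linear functional, so its kernel is also a hyperplane. The inclusion $\operatorname{image}(\wp) \subseteq \ker(\operatorname{Tr})$ follows from the telescoping computation $\operatorname{Tr}(T^2 + T) = \sum_{i=0}^{h-1}\big(T^{2^{i+1}} + T^{2^i}\big) = T^{2^h} + T = 0$, using $T^{2^h} = T$. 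Since both sides are hyperplanes, equality holds: $\operatorname{image}(\wp) = \ker(\operatorname{Tr})$. Therefore $T^2 + T = \delta$ is solvable in $\mathbb{F}_{2^h}$ precisely when $\operatorname{Tr}(\delta) = 0$, which combined with the previous paragraph completes the proof.

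I do not anticipate a genuine obstacle here; the lemma is classical and the one point requiring slight care is verifying that the substitution $X = (\beta/\alpha)T$ is legitimate (it is, since $\alpha, \beta \neq 0$) and that it transforms the equation into exactly the stated normalized form with parameter $\alpha\gamma/\beta^2$. Everything else is a routine dimension count for $\mathbb{F}_2$-linear maps on $\mathbb{F}_{2^h}$.
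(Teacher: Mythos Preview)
Your proof is correct and complete. The paper itself does not supply a proof of this lemma: it states the result as well-known and refers the reader to \cite{roots}, so there is no in-paper argument to compare against. What you have written is exactly the standard justification one would expect --- the reduction via $X=(\beta/\alpha)T$ to the Artin--Schreier form $T^2+T+\delta=0$ with $\delta=\alpha\gamma/\beta^2$, followed by the dimension count showing $\operatorname{image}(\wp)=\ker(\operatorname{Tr})$ --- and it would serve as a self-contained replacement for the citation.
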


\begin{Remark} Solutions to a cubic equation $c(X)=0$ defined by $$c(X)=X^3+a_1X^2+a_2X+a_3\in \F2h[X]$$ can be retrieved by solving a cubic equation of the form  \begin{equation}g(\theta)=\theta^3+b\theta+a=0.\end{equation}
For instance, if $a_2\neq a_1^2$, we can apply the substitution $$X=(a_2+a_1^2)^{\frac{1}{2}} \theta+a_1,$$ to obtain the cubic polynomial $g(\theta)=\theta^3+\theta+a$ and $$a=\frac{a_3+a_2a_1}{(a_2+a_1^2)^{\frac{3}{2}} }.$$
Notice that, as the product of the three roots of $g(\theta)$ is equal to $a\in \F2h$, it follows that $g(\theta)$ is either irreducible over $ \F2h$, has all its roots in $ \F2h$, or exactly one root in $ \F2h$. 
 \end{Remark}
 
For cubic equations, we will make use of the next theorem, from \cite{berlekamp}, which gives the number of solutions in $\bF_{q}$, $q=2^h>2$, of $g(\theta)=\theta^3+\theta+a=0$ in terms of a condition on the constant term $a \in \bF_q$, which is called {\it admissible} if 
$$a=\frac{v+v^{-1}}{(1+v+v^{-1})^3}$$ for some $v \in \Fq\setminus \mathbb{F}_4$.

\begin{Theorem}\label{cubicequation}
Let $q=2^h>2$ and $a \in \bF_q\setminus\{0\}$. The cubic equation $\theta^3+\theta+a=0$ has
\begin{itemize}
\item three solutions in $\Fq$ if and only if $q\neq 4$, $\operatorname{Tr}(a^{-1})=\operatorname{Tr}(1)$ and $a$ is admissible, 
\item a unique solution in $\Fq$ if and only if $\operatorname{Tr}(a^{-1})\neq \operatorname{Tr}(1)$,
\item no solutions in $\Fq$ if and only if $\operatorname{Tr}(a^{-1})=\operatorname{Tr}(1)$ and $a$ is not admissible.
\end{itemize}
\end{Theorem}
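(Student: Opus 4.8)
The plan is to determine the factorisation type of $g(\theta)=\theta^{3}+\theta+a$ over $\Fq$, $q=2^{h}$. First I would note that $g$ is separable, since $g'(\theta)=\theta^{2}+1=(\theta+1)^{2}$ forces any repeated root to equal $1$, while $g(1)=a\neq0$. Hence $g$ has $0$, $1$ or $3$ distinct roots in $\Fq$, according as $g$ is irreducible over $\Fq$, factors as a linear times an irreducible quadratic, or splits completely. As the splitting field of $g$ over the finite field $\Fq$ has cyclic Galois group $G$, and the cyclic subgroups of $\operatorname{Sym}_{3}$ are exactly the trivial group, $C_{2}$ and the alternating group $A_{3}\cong C_{3}$, these three cases correspond to $G=1$, $G\cong C_{2}$ and $G\cong C_{3}$ respectively; in particular $g$ has a unique root in $\Fq$ if and only if $G\not\leq A_{3}$.

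Next I would invoke the classical cubic resolvent. Writing $\theta_{1},\theta_{2},\theta_{3}$ for the roots of $g$ in its splitting field, put
\[
u=\theta_{1}\theta_{2}^{2}+\theta_{2}\theta_{3}^{2}+\theta_{3}\theta_{1}^{2},\qquad
v=\theta_{1}^{2}\theta_{2}+\theta_{2}^{2}\theta_{3}+\theta_{3}^{2}\theta_{1}.
\]
Each $3$-cycle fixes $u$ and $v$, whereas each transposition interchanges them. Using $\theta_{1}+\theta_{2}+\theta_{3}=0$, $\theta_{1}\theta_{2}+\theta_{1}\theta_{3}+\theta_{2}\theta_{3}=1$ and $\theta_{1}\theta_{2}\theta_{3}=a$, a direct computation gives $u+v=a$ and $uv=a^{2}+1$. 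Since $u+v=a\neq 0$ in characteristic $2$ we have $u\neq v$, so $G\leq A_{3}$ if and only if $u\in\Fq$, equivalently (as $v=u+a$) if and only if both $u,v\in\Fq$; that is, if and only if $t^{2}+at+(a^{2}+1)\in\Fq[t]$ splits over $\Fq$. By Lemma~\ref{quadratic} this happens if and only if $\operatorname{Tr}\!\big((a^{2}+1)/a^{2}\big)=\operatorname{Tr}(1)+\operatorname{Tr}(a^{-2})=\operatorname{Tr}(1)+\operatorname{Tr}(a^{-1})=0$. This already gives the middle bullet: $g$ has exactly one root in $\Fq$ precisely when $\operatorname{Tr}(a^{-1})\neq\operatorname{Tr}(1)$.

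It remains to handle the case $\operatorname{Tr}(a^{-1})=\operatorname{Tr}(1)$, in which $G\leq A_{3}$ and so $g$ has $0$ or $3$ roots, the two possibilities being $G\cong C_{3}$ ($g$ irreducible) and $G=1$ ($g$ splits); equivalently, $g$ has three roots if and only if it has at least one root. Here admissibility enters. If $a$ is admissible, say $a=(v+v^{-1})/(1+v+v^{-1})^{3}$ with $v\in\Fq\setminus\mathbb{F}_{4}$, then $v\neq 0$ and $1+v+v^{-1}\neq 0$, and a direct substitution shows that $\theta=v^{-1}/(1+v+v^{-1})$ is a root of $g$ in $\Fq$. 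Conversely, let $\theta\in\Fq$ be a root of $g$; then $\theta\notin\{0,1\}$ because $a\neq0$. Rearranging $\theta=v^{-1}/(1+v+v^{-1})$ gives $v^{2}+v+(1+\theta^{-1})=0$, which by Lemma~\ref{quadratic} has a solution $v\in\Fq$ exactly when $\operatorname{Tr}(1+\theta^{-1})=0$. Since $a=\theta^{3}+\theta=\theta(\theta+1)^{2}$, the identity $1/\big(\theta(\theta+1)^{2}\big)=1/\theta+1/(\theta+1)+1/(\theta+1)^{2}$ together with $\operatorname{Tr}(x^{2})=\operatorname{Tr}(x)$ yields $\operatorname{Tr}(a^{-1})=\operatorname{Tr}(\theta^{-1})$; in the present case this gives $\operatorname{Tr}(1+\theta^{-1})=\operatorname{Tr}(1)+\operatorname{Tr}(a^{-1})=0$, so such a $v\in\Fq$ exists. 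Moreover $v^{2}+v+1=\theta^{-1}\neq0$ rules out $v\in\mathbb{F}_{4}\setminus\mathbb{F}_{2}$, while $v\in\mathbb{F}_{2}$ would force $\theta=1$; hence $v\in\Fq\setminus\mathbb{F}_{4}$ and $a$ is admissible. Thus, when $\operatorname{Tr}(a^{-1})=\operatorname{Tr}(1)$, $g$ splits over $\Fq$ if and only if $a$ is admissible, and otherwise $g$ has no root in $\Fq$; since admissibility requires $\Fq\setminus\mathbb{F}_{4}\neq\emptyset$ (so $q\neq4$), this matches the first and third bullets.

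The step I expect to be the main obstacle is the characteristic-$2$ resolvent bookkeeping: obtaining $uv=a^{2}+1$ cleanly, which is the one genuinely computational point (the usual discriminant shortcut is useless here, since $u+v$ and $u-v$ coincide in characteristic $2$, so $uv$ has to be read off an independent symmetric function), and then managing the two-way passage between a root $\theta$ of $g$ and the parameter $v$ — in particular the trace identity $\operatorname{Tr}(a^{-1})=\operatorname{Tr}(\theta^{-1})$ and the verification that the recovered $v$ avoids $\mathbb{F}_{4}$. The Galois-theoretic framework (separability, cyclicity of $G$ over $\Fq$, exclusion of $\operatorname{Sym}_{3}$) is routine by comparison.
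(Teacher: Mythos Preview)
The paper does not prove this theorem; it is quoted from \cite{berlekamp} without argument. Your proposal therefore supplies strictly more than the paper does, and the argument is correct and self-contained: the separability check, the cyclicity of the Galois group over a finite field (ruling out $\operatorname{Sym}_3$), the characteristic-$2$ quadratic resolvent $t^{2}+at+(a^{2}+1)$ with the trace criterion from Lemma~\ref{quadratic}, and the two-way passage between a root $\theta$ and the admissibility parameter $v$ all go through as you describe. The identities $uv=a^{2}+1$, $\operatorname{Tr}(a^{-1})=\operatorname{Tr}(\theta^{-1})$, and the verification that the recovered $v$ lies in $\Fq\setminus\mathbb{F}_{4}$ are all correct.

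One cosmetic slip: in your first paragraph the ``respectively'' matching of the three factorisation types with $G=1$, $C_{2}$, $C_{3}$ is in the wrong order (irreducible corresponds to $G\cong C_{3}$, splitting to $G=1$). This does not affect the argument, since the only fact you actually use is the correct statement that $g$ has a unique root in $\Fq$ if and only if $G\not\leq A_{3}$.
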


\subsection{The Veronese surface in $\PG(5,q)$}

The Veronese surface $\cV(\Fq)$ contains $q^2 + q + 1$ conics, defined as the image of lines in $\PG(2,q)$ under the Veronese map $\nu$, where any two points $P$, $Q$ of $\mathcal{V}(\Fq)$ lie on a unique such conic given by 
\[
\mathcal{C}(P,Q):= \nu(\langle\nu^{-1}(P),\nu^{-1}(Q)\rangle).
\] 

 Since conics of $\cV(\Fq)$ correspond to lines of $\PG(2, q)$ via $\nu$, any two of these conics have a unique point in common. Also, the conics of $\PG(2, q)$ correspond to the hyperplane sections of $\cV(\Fq)$. If $\cC$ is a repeated line, then the corresponding hyperplane of $\PG(5, q)$ meets $\cV(\Fq)$ in a conic, if $\cC$ is a pair of real lines, then the corresponding hyperplane meets $\cV(\Fq)$ in two conics, if $\cC$ is a pair of conjugate imaginary lines, then the corresponding hyperplane meets $\cV(\Fq)$ in a point, if $\cC$ is a non-singular conic, then the corresponding hyperplane meets $\cV(\Fq)$ in a {\it normal rational curve}. For $q\neq 2$,  planes of $\PG(5, q)$ which meet $\cV(\Fq)$ in a conic are called {\it conic planes}.\par
 If $q$ is even, then all tangent lines to a conic $\mathcal{C}$ in $\mathcal{V}(\Fq)$ are concurrent, meeting at the {\it nucleus} of $\cC$. 
The set of all nuclei of conics in $\mathcal{V}(\Fq)$ coincides with the set of points of a plane in $\PG(5,q)$ known as the {\it nucleus plane} of $\cV(\bF_q)$ which is defined as the subspace $\mathcal{N}=\mathcal{Z}(Y_0, Y_3, Y_5)$ of
$\PG(5, q)$. In the matrix representation, points contained in the nucleus plane correspond to symmetric $3\times 3$ matrices with zeros on the main diagonal. 
Every rank-$2$ point $R$ of $\PG(5,q)$ defines a unique conic $\mathcal{C}(R)$ in $\cV(\bF_q)$. 
If $R$ lies on the secant $\langle P,Q \rangle$ with $P,Q \in \mathcal{V}(\Fq)$ then $\cC(R)=\cC(P,Q)$. 
If $R$ is contained in the nucleus plane, then $\cC(R)$ is the unique conic  contained in $\mathcal{V}(\Fq)$ with nucleus $R$.\par

Tangent lines of $\cV(\Fq)$ are tangent lines to the conics in $\cV(\Fq)$. 
Since $\cV(\Fq)$ is a non-singular 2-dimensional variety, all tangent lines of $\cV(\Fq)$ at a point $P\in \cV(\Fq)$ are contained in {\it the tangent plane of $\cV(\Fq)$ at $P$}.

\begin{Lemma}\label{lem:nuclei_pencil}
Let $\cL$ be a pencil of lines in $\PG(2,q)$, $q$ even, and denote by $\nu(\cL)$ the set of corresponding conics on $\cV(\bF_q)$ under the Veronese map. The set of nuclei of the conics in $\nu(\cL)$ is a set of $q+1$ collinear points in the nucleus plane of $\cV(\bF_q)$.
\end{Lemma}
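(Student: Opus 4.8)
The plan is to work directly with coordinates in $\PG(2,q)$ and $\PG(5,q)$. Up to the action of $K\cong\PGL(3,q)$, which acts transitively on pencils of lines in $\PG(2,q)$ (since $\PGL(3,q)$ is transitive on points of $\PG(2,q)$, hence on pencils, which are the lines through a point), I may assume that $\cL$ is the pencil of lines through the point $(0,0,1)$, i.e. the lines $\ell_{(\lambda,\mu)}=\cZ(\mu X - \lambda Y)$ for $(\lambda,\mu)\in\PG(1,q)$, which can be parametrised as $\ell_t=\{(t,1,s)\,:\,s\in\Fq\}\cup\{(1,0,s):s\in\Fq\}$ together with the ``vertical'' line $\cZ(Y)$. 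First I would compute, for a single such line $\ell$, the conic $\nu(\ell)\subseteq\cV(\Fq)$ and its nucleus. A line $\cZ(aX+bY)$ of $\PG(2,q)$ maps under $\nu$ to the conic consisting of points $(u_0^2,u_0u_1,u_0u_2,u_1^2,u_1u_2,u_2^2)$ with $au_0+bu_1=0$; in the matrix representation this is the set of rank-$1$ symmetric matrices $uu^{\mathsf T}$ with $u$ in the line, which spans the plane of symmetric matrices of the form (in suitable coordinates) $\left[\begin{smallmatrix} b^2 x& -ab x& b y\\ -ab x& a^2 x& -a y\\ b y& -a y& z\end{smallmatrix}\right]$ — but since $q$ is even, $-1=1$, and this matrix has zero diagonal precisely when $x=z=0$ and $b^2 x=a^2 x=0$, forcing the nucleus to be the point with matrix $\left[\begin{smallmatrix} 0& 0& b\\ 0& 0& a\\ b& a& 0\end{smallmatrix}\right]$, i.e. $(0,0,b,0,a,0)$ in the $(Y_0,\dots,Y_5)$-coordinates.

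The key step is then to observe that as the line $\cZ(aX+bY)$ ranges over the pencil $\cL$ (so $(a,b)$ ranges over $\PG(1,q)$, with the third coordinate direction fixed because the common point of the pencil is $(0,0,1)$), the nucleus $(0,0,b,0,a,0)$ traces out exactly the set $\{(0,0,b,0,a,0):(a,b)\in\PG(1,q)\}$, which is a line of $\PG(5,q)$, hence $q+1$ collinear points. Moreover every such point has zero diagonal, so it lies in $\cN=\cZ(Y_0,Y_3,Y_5)$. This gives both assertions at once: the nuclei are collinear, they are $q+1$ in number (distinct because the map $(a,b)\mapsto(0,0,b,0,a,0)$ is injective on $\PG(1,q)$), and they lie in the nucleus plane. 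To phrase it cleanly I would fix the normal form of the pencil first, do the one-line computation of a nucleus, and then let the parameter vary; the collinearity is immediate from the linear dependence of the coordinate vectors on $(a,b)$.

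The only genuine subtlety — and the step I expect to need the most care — is justifying the reduction to a normal form: I must make sure that $K$ (acting on $\PG(5,q)$ as the lift of $\PGL(3,q)$) genuinely carries the nucleus of $\nu(\ell)$ to the nucleus of $\nu(g\ell)$ for $g\in\PGL(3,q)$, i.e. that the notion of nucleus is $K$-equivariant. This follows because $g\in\PGL(3,q)$ maps the conic $\nu(\ell)$ to $\nu(g\ell)$ and maps tangent lines to tangent lines (as $\nu$ is defined over $\Fq$ and $g$ is a collineation fixing $\cV(\Fq)$), so it maps the common point of the tangents — the nucleus — to the common point of the images. Hence it suffices to verify the statement for one representative pencil, which is the explicit computation above. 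An alternative, coordinate-free phrasing: each conic $\mathcal{C}$ in $\nu(\cL)$ has a well-defined nucleus $R(\mathcal C)\in\cN$ with $\cC(R(\mathcal C))=\mathcal C$, and since distinct conics in $\nu(\cL)$ give distinct nuclei, $|\{R(\mathcal C):\mathcal C\in\nu(\cL)\}|=q+1$; collinearity then follows from the explicit parametrisation. Either route reduces to the same short linear-algebra verification.
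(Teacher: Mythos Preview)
Your proposal is correct and follows essentially the same approach as the paper: reduce by transitivity of $\PGL(3,q)$ on pencils to a single representative pencil, then compute the nuclei explicitly and observe that they parametrise a line in $\cN$. The only differences are cosmetic---you use the pencil through $(0,0,1)$ rather than $(1,0,0)$, and you spell out the equivariance of the nucleus under $K$, whereas the paper's proof is a two-line computation that takes these points for granted.
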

\begin{proof}
By transitivity, it suffices to compute the nuclei of the pencil $\cL$ of lines through the point with coordinates $(1,0,0)$. The nuclei of the conics in $\nu(\cL)$ are the $q+1$ points of the line $\cZ(Y_0,Y_3,Y_4,Y_5)$ in the nucleus plane of $\cV(\bF_q)$.
\end{proof}

\subsection{The group action}
We are interested in the action on subspaces of $\PG(5,q)$ of the group $K\leqslant \PGL(6,q)$ defined as the lift of $\PGL(3,q)$ through the Veronese map $\nu$. 
Explicitly, if $\phi_A\in \PGL(3,q)$ is represented by the matrix $A \in \GL(3,q)$ then we define the corresponding projectivity $\alpha(\phi_A)\in \PGL(6,q)$ through its action on the points of $\PG(5,q)$ by 
\[
\alpha(\phi_A): P\mapsto Q \quad \text{where} \quad M_Q=AM_PA^T,
\] 
where $M_Q$ and $M_P$ are the matrix representations of $Q$ and $P$. Then $K:=\alpha(\PGL(3,q))$ is isomorphic to $\PGL(3,q)$ and leaves $\cV(\bF_q)$ invariant. 

\begin{Remark}\label{remq=2}
\textnormal{
If $q > 2$ then $K \cong \PGL(3,q)$ is the {\it full} setwise stabiliser of $\cV(\bF_q)$ in $\PGL(6,q)$. 
If $q=2$ then the full setwise stabiliser of $\cV(\bF_q)$ is $\text{Sym}_7$.
}
\end{Remark}

\subsection{Points, lines and hyperplanes of $\PG(5,q)$}

 The rank distribution of a subspace $W$ of $\PG(5,q)$ is the $3$-tuple $[r_1,r_2,r_3]$, where $r_i$ is the number of rank-$i$ points in $W$. Clearly, rank-distributions are $K$-invariants. The next definition introduces stronger $K$-invariants, first introduced in \cite{LaPoSh2021} and more formally in \cite{solidsqeven}, which play a fundamental role in classifying subspaces of $\PG(5,q)$.
 
 \begin{Definition}\label{orbitdist}
\textnormal{
Let $W_1,W_2,\dots,W_m$ denote a chosen ordering of the distinct $K$-orbits of $r$-spaces in $\PG(n,q)$. 
The \textit{$r$-space orbit distribution} of a subspace $W$ of $\PG(n,q)$ is the list $[w_1,w_2,\ldots,w_m]$, where $w_i$ is the number of elements of $W_i$ incident with $W$.
}
\end{Definition}
There are four $K$-orbits of points in $\PG(5,q)$: the orbit $\cV(\bF_q)$ of rank-$1$ points, which has size $q^2+q+1$, the orbit of rank-$3$ points, which has size $q^5-q^2$, and two orbits of rank-$2$ points. For $q$ even, the orbits of rank-$2$ points comprise the $q^2+q+1$ points of the nucleus plane $\mathcal{N}$, and the $(q^2-1)(q^2+q+1)$ points contained in conic planes but not in $\mathcal{N}\cup \cV(\bF_q) $. Therefore, the point-orbit distribution of a subspace $W$ of $\PG(5,q)$, $q$ even, is the $4$-tuple $[r_1,r_{2n},r_{2s},r_3]$, where $r_i$, $i\in \{1,3\}$, is the number of rank-$i$ points in $W$, $r_{2n}$ is the number of rank-$2$ points in $W \cap \mathcal{N}$, and $r_{2s}$ is the number of the remaining  rank-$2$ points in $W$. Similarly, we obtain the {\it line-}, {\it plane-}, {\it solid-}, and {\it hyperplane-orbit distributions} of $W$ for $r=1,2,3,4$ respectively. This data turns out to be very useful in determining the $K$-orbit of a subspace of $\PG(5,q)$. For example, if $q$ is even and $W$ is a solid of $\PG(5,q)$, then its line-orbit distribution completely determines its $K$-orbit \cite{solidsqeven}. In \cite{LaPoSh2021} it was shown that the line-orbit distributions are a complete invariant for the $K$-orbits of planes meeting the Veronese surface $\cV(\bF_q)$, $q$ odd.
The line orbits themselves were determined (for all $q$) in \cite{lines}, based on the canonical forms of tensors of format $(2,3,3)$ from \cite{canonical}.
\begin{Theorem}\label{lines} 
There are 15 $K$-orbits of lines in $\PG(5,q)$, $q$ even, as described in Table \ref{tableoflines}.
\end{Theorem}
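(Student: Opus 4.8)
## Proof Proposal for Theorem \ref{lines}

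The plan is to translate the problem into the classification of pencils of conics and then into the known classification of tensors of format $(2,3,3)$. A line $\ell\subseteq\PG(5,q)$ is the projectivisation of a $2$-dimensional subspace $U=\langle M_0,M_1\rangle$ of the space $S^2\Fq^3$ of symmetric $3\times 3$ matrices, and the action of $K$ on $\ell$ is induced by $M\mapsto AMA^T$ with $A\in\GL(3,q)$, while replacing the spanning pair $(M_0,M_1)$ corresponds to the natural $\GL(2,q)$-action. Encoding $U$ as the tensor $e_1\otimes M_0+e_2\otimes M_1\in\Fq^2\otimes\Fq^3\otimes\Fq^3$, the $K$-orbit of $\ell$ is refined by the $\GL(2,q)\times\GL(3,q)\times\GL(3,q)$-orbit of this tensor, i.e.\ by its canonical form from \cite{canonical}; conversely, such a canonical form is relevant here exactly when its orbit meets $\Fq^2\otimes S^2\Fq^3$, that is, when it admits a \emph{partially symmetric} representation. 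The first step, carried out in \cite{lines}, is to list the $(2,3,3)$ canonical forms, decide which of them admit a symmetric representative, and determine for each such form the resulting $K$-orbit(s) of symmetric pencils; this already produces a finite list of candidate orbits.

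Next I would separate these candidates using $K$-invariants read off from the pencil. The coarsest is the rank distribution $[r_1,r_2,r_3]$ of the points of $\ell$, which is controlled by the binary cubic $\Phi(\lambda,\mu)=\det(\lambda M_0+\mu M_1)$. Either $\Phi\equiv 0$, so that every point of $\ell$ has rank at most $2$ and $\ell$ lies in the determinantal cubic threefold $\cZ(\det)$; or $\Phi\not\equiv 0$, in which case $\ell$ meets that threefold precisely in the $\Fq$-points of $\cZ(\Phi)\subseteq\ell\cong\PG(1,q)$, so in $0$, $1$, $2$ or $3$ points, and the factorisation type of $\Phi$ (a cube, a square times a line, a line times an irreducible quadratic, three distinct lines, or an irreducible cubic) is itself an invariant. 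Since $\cV(\Fq)$ contains no line, $\ell$ meets $\cV(\Fq)$ in $0$, $1$ or $2$ points, and for $q$ even one must in addition record how $\ell$ meets the nucleus plane $\mathcal{N}$ and which conic or conics of $\cV(\Fq)$ are cut out by the rank-$2$ points of $\ell$; Lemma~\ref{lem:nuclei_pencil} governs the cases where $\ell$ lies in $\mathcal{N}$ or consists of collinear nuclei. Feeding these data (together with the incidence of $\ell$ with conic planes and tangent planes of $\cV(\Fq)$) back against the candidate list identifies exactly $15$ orbits and, for each, pins down the representative recorded in Table~\ref{tableoflines}.

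The main obstacle is the family of lines with $\Phi\equiv 0$, i.e.\ those contained entirely in the determinantal cubic threefold: here one is classifying pencils of singular symmetric $3\times 3$ matrices, and rank and incidence data alone do not distinguish all of them — one needs the finer structure of such a pencil (a common kernel vector, a common point of the associated conics, a shared totally isotropic direction, and so on), which for $q$ even behaves quite differently from the odd case and requires separate treatment of how the rank-$2$ points sit with respect to $\mathcal{N}$. A secondary difficulty is the behaviour for small $q$: one must verify that the $(2,3,3)$ canonical forms, and the refinements derived from them, remain exhaustive and pairwise $K$-inequivalent for every even $q$, checking by hand the handful of potentially exceptional small fields. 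Once these points are settled the theorem, including the description in Table~\ref{tableoflines}, follows.
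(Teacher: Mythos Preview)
The paper does not give its own proof of Theorem~\ref{lines}; the result is quoted from \cite{lines}, with the remark that the classification there is ``based on the canonical forms of tensors of format $(2,3,3)$ from \cite{canonical}''. Your proposal is precisely a sketch of that method: encode a line as a pencil of symmetric $3\times 3$ matrices, view this as a partially symmetric tensor in $\Fq^2\otimes\Fq^3\otimes\Fq^3$, use the $(2,3,3)$ canonical forms from \cite{canonical} to obtain a coarse list, and then refine within each tensor orbit that admits a symmetric representative using rank data and incidence with the nucleus plane. So your plan coincides with the approach of the cited source, which is all the present paper invokes.

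One small point of phrasing: you write that ``the $K$-orbit of $\ell$ is refined by the $\GL(2,q)\times\GL(3,q)\times\GL(3,q)$-orbit''. The containment goes the other way: since $K$ acts via $M\mapsto AMA^T$, the $K$-orbits are (possibly proper) subsets of the full tensor orbits, so it is the $K$-classification that refines the tensor classification, not the reverse. This is consistent with the rest of your outline (you correctly allow one tensor orbit to split into several $K$-orbits), so it is only the wording that is inverted.
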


Hyperplanes of $\PG(5,q)$ correspond to conics of $\PG(2,q)$ through the Veronese map $\nu$. In particular, we have four $K$-orbits of hyperplanes defined as follows: $\cH_{1}$, $\cH_{2r}$ and $\cH_{2i}$ denote the $K$-orbits of hyperplanes corresponding to the $\PGL(3,q)$-orbits of double lines, pairs of real lines and pairs of conjugate imaginary lines in $\PG(2,q)$, respectively, and $\cH_3$ denotes the $K$-orbit of hyperplanes corresponding to the $\PGL(3,q)$-orbit of non-singular conics in $\PG(2,q)$.

\FloatBarrier
\begin{table}[!htbp]
\begin{center}
\footnotesize{
\begin{tabular}[h]{ l l} 
 \hline
Orbits& Point-OD's\\ \hline
$o_5$&$[2,0,q-1,0]$\\
$o_6$& $[1,1,q-1,0]$\\
$o_{8,1}$ & $[1,0,1,q-1]$\\
$o_{8,2}$ & $[1,1,0,q-1]$\\
$o_9$ & $[1,0,0,q]$\\
$o_{10}$ & $[0,0,q+1,0]$\\
$o_{12,1}$ & $[0,q+1,0,0]$\\ 
$o_{12,2}$ & $[0,1,q,0]$\\ 
$o_{13,1}$ & $[0,1,1,q-1]$\\ 
$o_{13,2}$ & $[0,0,2,q-1]$\\ 
$o_{14}$ & $[0,0,3,q-2]$\\ 
$o_{15}$ & $[0,0,1,q]$\\ 
$o_{16,1}$ & $[0,1,0,q]$\\
$o_{16,2}$ & $[0,0,1,q]$\\  
$o_{17}$ & $[0,0,0,q+1]$\\ \hline

 \end{tabular}}
 \caption{\label{tableoflines}$K$-orbits of lines in $\PG(5,q)$, $q$ even.}
\end{center}
\end{table}

The following criterion for a conic  to be non-singular in $\PG(2,q)$, $q$ even, is well-known (see e.g. \cite{hirsch}).  

\begin{Lemma}\label{lem:non-singular}
The conic $\mathcal{C}=\cZ(h)$, where $h=a_{00}X^2+a_{01}XY+a_{02}XZ+a_{11}Y^2+a_{12}YZ+a_{22}Z^2\in \F2h[X,Y,Z]$ is non-singular if and only if $a_{00}a_{12}^2+a_{11}a_{02}^2+a_{22}a_{01}^2+a_{01}a_{02}a_{12}\neq 0$. 
\end{Lemma}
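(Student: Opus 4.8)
The plan is to characterise the singular points of $\mathcal{C}=\cZ(h)$ directly and to show that such a point exists precisely when the quantity $\Delta:=a_{00}a_{12}^2+a_{11}a_{02}^2+a_{22}a_{01}^2+a_{01}a_{02}a_{12}$ vanishes. Recall that $P\in\PG(2,q)$ is a singular point of the conic $\cZ(h)$ if and only if $h(P)=0$ together with $\partial_X h(P)=\partial_Y h(P)=\partial_Z h(P)=0$. Since $\F2h$ has characteristic $2$, the square terms contribute nothing to the differential, so
\[
\partial_X h=a_{01}Y+a_{02}Z,\qquad \partial_Y h=a_{01}X+a_{12}Z,\qquad \partial_Z h=a_{02}X+a_{12}Y,
\]
and the common zero locus of these three linear forms is the kernel of the alternating matrix
\[
B=\begin{bmatrix} 0 & a_{01} & a_{02}\\ a_{01} & 0 & a_{12}\\ a_{02} & a_{12} & 0\end{bmatrix},
\]
whose rank, being that of an alternating bilinear form, is $0$ or $2$.

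First I would treat the generic case $\operatorname{rank}(B)=2$. Then $\ker B$ is a single point, namely the nucleus $N=(a_{12},a_{02},a_{01})$ of $\mathcal{C}$, and a direct substitution yields $h(N)=\Delta$: the three mixed monomials $a_{01}XY$, $a_{02}XZ$, $a_{12}YZ$ each evaluate to $a_{01}a_{02}a_{12}$ at $N$, hence sum to $3a_{01}a_{02}a_{12}=a_{01}a_{02}a_{12}$ in characteristic $2$, while the square terms give $a_{00}a_{12}^2+a_{11}a_{02}^2+a_{22}a_{01}^2$. Therefore $\cZ(h)$ has a singular point if and only if $N\in\cZ(h)$, i.e. if and only if $\Delta=0$; equivalently, $\cZ(h)$ is non-singular if and only if $\Delta\neq 0$. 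Next I would dispose of the degenerate case $\operatorname{rank}(B)=0$, i.e. $a_{01}=a_{02}=a_{12}=0$: here $\Delta=0$ automatically, and since $\F2h$ is perfect, $h=a_{00}X^2+a_{11}Y^2+a_{22}Z^2=(\sqrt{a_{00}}\,X+\sqrt{a_{11}}\,Y+\sqrt{a_{22}}\,Z)^2$ is a repeated line (or the zero form), which is singular — again consistent with $\Delta=0$. Combining the two cases proves the equivalence.

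I would finish with two short remarks. First, when $\Delta\neq 0$ the form $h$ is necessarily nonzero and irreducible over $\overline{\mathbb{F}_{2^h}}$, since a product of two linear forms always has a singular point (their common zero, or the whole line if they coincide); thus $\Delta\neq 0$ genuinely corresponds to a smooth oval conic. Second, the singular locus is cut out by forms defined over $\F2h$, and in both cases above $\ker B$ already contains $\F2h$-rational points on which $h$ is tested, so there is no discrepancy between being singular over $\F2h$ and over its algebraic closure. The computations are entirely routine; the only points demanding care are the vanishing of the derivatives of the square terms in characteristic $2$, the collapse $3a_{01}a_{02}a_{12}=a_{01}a_{02}a_{12}$ when evaluating $h$ at the nucleus, and not overlooking the rank-$0$ degeneracy of $B$. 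Accordingly I anticipate no real obstacle: the main modelling choice is simply to phrase non-singularity via the non-existence of a singular point and reduce everything to evaluating $h$ at the nucleus $N$.
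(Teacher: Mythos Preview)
Your argument is correct. The paper does not supply its own proof of this lemma: it is stated as a well-known criterion with a reference to Hirschfeld's \emph{Projective geometries over finite fields}, so there is no in-paper proof to compare against. Your derivation---computing the partial derivatives (which in characteristic $2$ lose the square terms), identifying the unique candidate singular point as the nucleus $N=(a_{12},a_{02},a_{01})$ when the alternating matrix $B$ has rank $2$, evaluating $h(N)=\Delta$, and then separately handling the rank-$0$ degeneracy via perfectness of $\F2h$---is exactly the standard route and is carried out cleanly.
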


\subsection{Inflexion points of planes in $\PG(5,q)$}

Recall that with a plane $\pi$ in $\PG(5,q)$ we associated a planar cubic $\mathscr{C}(\pi)$.
As we will see later, studying the cubic curves associated with planes in $\PG(5,q)$ is a useful $K$-invariant to differentiate between non-equivalent planes. In this section we introduce some terminology which will be used in the sequel.

\begin{Remark}
Let us already point out that the study of these cubic curves is not sufficient to completely characterize each orbit. For instance, the representatives of the orbits $\Sigma_8$ and $\Sigma_9$ in Table \ref{tableplanes} share the same cubic curve $\mathcal{Z}(XZ^2)$. In this case, the fact that these two orbits are distinct can be seen by computing their intersection with the nucleus plane $\mathcal{N}$.
\end{Remark}

Let $\mathscr{C}$ be a cubic curve in $\PG(2,q)$ defined by $\mathscr{C}=\mathcal{Z}(f)$, where 
\begin{equation} \label{cubiccurve1}
\begin{split}
f(X,Y,Z)=a_{000}X^3+a_{011}XY^2+a_{022}XZ^2+a_{100}X^2Y+a_{111}Y^3\\
+a_{122}YZ^2+a_{200}X^2Z+a_{211}Y^2Z+a_{222}Z^3+a_{012}XYZ.
\end{split}
\end{equation}
 Then, $\mathscr{C}$ can be represented by $\mathscr{C}(A,a_{012})$ where  \begin{equation}\label{matrix}
A=\begin{bmatrix}a_{000}&a_{011}&a_{022}\\ a_{100}&a_{111}&a_{122}\\a_{200}&a_{211}&a_{222} \end{bmatrix}.\end{equation}

\begin{Definition}

The set of tangent lines to a cubic curve  $\mathscr{C}(A,a_{012})$ defined over a finite field of characteristic 2, known as the cubic envelope of $\mathscr{C}(A,a_{012})$, is the dual of the cubic curve defined by 
$${\mathscr{C}(\Phi(A),a_{012}^2)},$$
where 
\begin{equation}\Phi(A)= Adj(A)^T+a_{012}\begin{bmatrix}0&a_{022}&a_{011}\\ a_{122}&0&a_{100}\\a_{211}&a_{200}&0 \end{bmatrix}\end{equation} and $Adj(A)^T$ is the transpose of the adjoint matrix of $A$.

\end{Definition}

\begin{Definition} \label{defofinflexion}
An inflexion point of a cubic curve is a point of the curve whose tangent
meets the curve algebraically in a triple intersection.
\end{Definition}

The next lemma gives a characterisation of inflexion points of cubic curves defined over finite fields of characteristic $2$.

\begin{Lemma} \label{hessian} \cite[Theorem 3.5]{glynn}\\
Let  $\mathscr{C}(A,a_{012})$ be a cubic curve defined over a finite field of characteristic 2 with $a_{012}\neq0$. Then, the inflexion points of $\mathscr{C}(A,a_{012})$ are the non-singular points of $\mathscr{C}(A,a_{012})$ which lie on the cubic curve 
$\mathscr{C}(\Phi^2(A),a_{012}^4)$. The curve $\mathscr{C}(\Phi^2(A),a_{012}^4)$ is also known as the Hessian of $\mathscr{C}(A,a_{012})$.
\end{Lemma}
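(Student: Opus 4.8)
The plan is to prove a local criterion characterising inflexion points in characteristic $2$, and then to translate it into a polynomial condition on the point and match it against the cubic form attached to $\mathscr{C}(\Phi^2(A),a_{012}^4)$; this is in essence the argument of Glynn~\cite[Theorem 3.5]{glynn}, which I now sketch.

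\textbf{Step 1: a local criterion.} Let $P=(p)$ be a non-singular point of $\mathscr{C}=\cZ(f)$. Expanding $f$ along a line $\langle p,q\rangle$ and using that in characteristic $2$ the binomial coefficients $\binom{3}{1}$ and $\binom{3}{2}$ collapse, one obtains $f(p+tq)=f(p)+t\,(\nabla f(p)\!\cdot\! q)+t^{2}\,(\Delta_pf)(q)+t^{3}f(q)$, where $\Delta_p f:=\sum_i p_i\,\partial_i f$ is the first polar conic of $P$. Since $P\in\mathscr{C}$, and by the Euler identity $\sum_i X_i\,\partial_i f = 3f = f$ the point $p$ lies on the tangent line $T_P\mathscr{C}=\cZ(\nabla f(p)\!\cdot\! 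X)$, the expansion along $T_P\mathscr{C}$ reduces to $f(p+tq)=t^{2}(\Delta_pf)(q)+t^{3}f(q)$ for $q$ a direction on $T_P\mathscr{C}$. Hence $P$ is an inflexion point iff $(\Delta_pf)(q)=0$ for that tangent direction, i.e. iff $T_P\mathscr{C}$ is a component of the polar conic $\Delta_P:=\cZ(\Delta_pf)$.

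\textbf{Step 2: structure of the polar conic.} A direct computation, or Lemma~\ref{lem:non-singular} applied to $\Delta_pf=\sum_i (A^{T}p)_i X_i^{2}+a_{012}(p_2X_0X_1+p_1X_0X_2+p_0X_1X_2)$, shows that the non-singularity condition for $\Delta_pf$ is, up to the nonzero factor $a_{012}^{2}$, the non-vanishing of $f(p)$; and in characteristic $2$ the gradient of a ternary conic always vanishes at exactly one point, which here is $p$ itself. Thus when $P\in\mathscr{C}$ the conic $\Delta_P$ is degenerate with singular point $P$, that is, a pair of (possibly coincident, possibly $\overline{\bF}_q$-conjugate) lines through $P$. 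The criterion of Step~1 then reads: $P$ is an inflexion point iff $T_P\mathscr{C}$ coincides with one of these two lines.

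\textbf{Step 3: coordinatisation and identification with the Hessian.} Expressing ``$L$ divides the conic $Q$'' as solvability of the linear system obtained by matching coefficients of $Q=L\cdot L'$, one eliminates $L'$ to get explicit polynomial conditions on $p$ for ``$T_P\mathscr{C}$ is a component of $\Delta_P$''; using $\nabla f(p)=A\,p^{(2)}+a_{012}\,(p_1p_2,p_0p_2,p_0p_1)$ and reducing modulo $f$, these conditions collapse, up to a nonzero scalar, to a single cubic form in $p$. This cubic is then identified with the one defining $\mathscr{C}(\Phi^{2}(A),a_{012}^{4})$: by the definition of the cubic envelope, the Gauss map $p\mapsto\nabla f(p)$ carries $\mathscr{C}(A,a_{012})$ onto the line-set dual to $\mathscr{C}(\Phi(A),a_{012}^{2})$, so iterating realises the ``envelope of the envelope'' as $\mathscr{C}(\Phi^{2}(A),a_{012}^{4})$, and the condition of Step~2 is exactly the incidence of $P$ with this doubly-dual cubic. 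Combining Steps~1--3 gives that the inflexion points of $\mathscr{C}(A,a_{012})$ are precisely its non-singular points lying on $\mathscr{C}(\Phi^{2}(A),a_{012}^{4})$, which is therefore the correct notion of Hessian in this setting.

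\textbf{Main obstacle.} The genuinely characteristic-$2$ difficulty is Step~3. The classical Hessian determinant of a ternary cubic is a symmetric matrix with zero diagonal and so vanishes identically, which is why one must pass to the $\Phi^{2}$-construction; the laborious part is verifying that the elimination really does reduce, modulo $f$ and up to a nonzero factor, to the single cubic $\mathscr{C}(\Phi^{2}(A),a_{012}^{4})$ — in particular that iterating the adjugate-twisted operator $\Phi$ is compatible with composing Gauss maps and that no spurious components are introduced by the choice of auxiliary point on $T_P\mathscr{C}$.
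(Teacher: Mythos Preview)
The paper does not prove this lemma: it is quoted verbatim from \cite[Theorem~3.5]{glynn} with no accompanying argument, so there is no proof in the paper to compare your proposal against.

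That said, your sketch is essentially Glynn's own argument and is sound where it is detailed. Steps~1 and~2 are correct: the Taylor expansion in characteristic~$2$ does give $f(p+tq)=f(p)+t\,\nabla f(p)\!\cdot\!q+t^{2}(\Delta_pf)(q)+t^{3}f(q)$ with $\Delta_pf=\sum_i p_i\partial_if$, the Euler relation $\sum X_i\partial_if=f$ places $P$ on both its tangent and its polar conic, and the discriminant computation via Lemma~\ref{lem:non-singular} really does collapse to $a_{012}^{2}f(p)$, so that for $P\in\mathscr{C}$ the polar conic is a line-pair through $P$. Your inflexion criterion ``$T_P\mathscr{C}$ is a component of $\Delta_P$'' is therefore exactly right. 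Step~3 is, as you honestly flag, the part where the actual work lies: the identification of the resulting condition with $\mathscr{C}(\Phi^{2}(A),a_{012}^{4})$ is a coefficient-by-coefficient verification that you do not carry out, and your appeal to ``iterating the Gauss map'' is suggestive rather than a proof. This is not a gap so much as an acknowledged omission of the bookkeeping that Glynn performs; if you want a self-contained proof you would need to fill in that elimination explicitly.
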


\begin{Remark}
 For fields of characteristic different from two, points of inflexion are defined as points of the intersection of the cubic with the classical Hessian (the determinant of the $3\times 3$ matrix of second derivatives), which is zero over characteristic two fields.
\end{Remark}

\begin{Definition}
We define inflexion points of a plane $\pi$ in $\PG(5,q)$ to be  inflexion points of its associated cubic curve in $\PG(2,q)$ defined as the determinant of the matrix representation of $\pi$, as explained above.
\end{Definition}

\section{The classification}

In this section we classify the $K$-orbits of planes which meet the Veronese surface $\cV(\Fq)$, $q=2^h$, in at least one point. The classification is divided into different parts, depending on the point-orbit distribution $[r_1,r_{2n},r_{2s},r_3]$. Figure \ref{fig:tree} gives an overview of the structure of the discussion, and can be interpreted as a decision tree for the classification, including geometric and combinatorial invariants for each of the orbits. The notation $\pi$ is used for a plane representing the $K$-orbit, $\mathscr{C}$ denotes the cubic curve associated with $\pi$, $\ell_i$ denotes a line of type $o_i$, $C$ denotes a non-singular conic, $T_6(C)$ denotes a tangent line to $C$ of type $o_6$, $\epsilon$ denotes the number of inflexion points of $\mathscr{C}$ and a representative $\pi$ has property $\Im$ if it meets a conic plane of $\cV(\Fq)$ in a line.

\begin{figure}[h]
\centering
\includegraphics[scale=0.3]{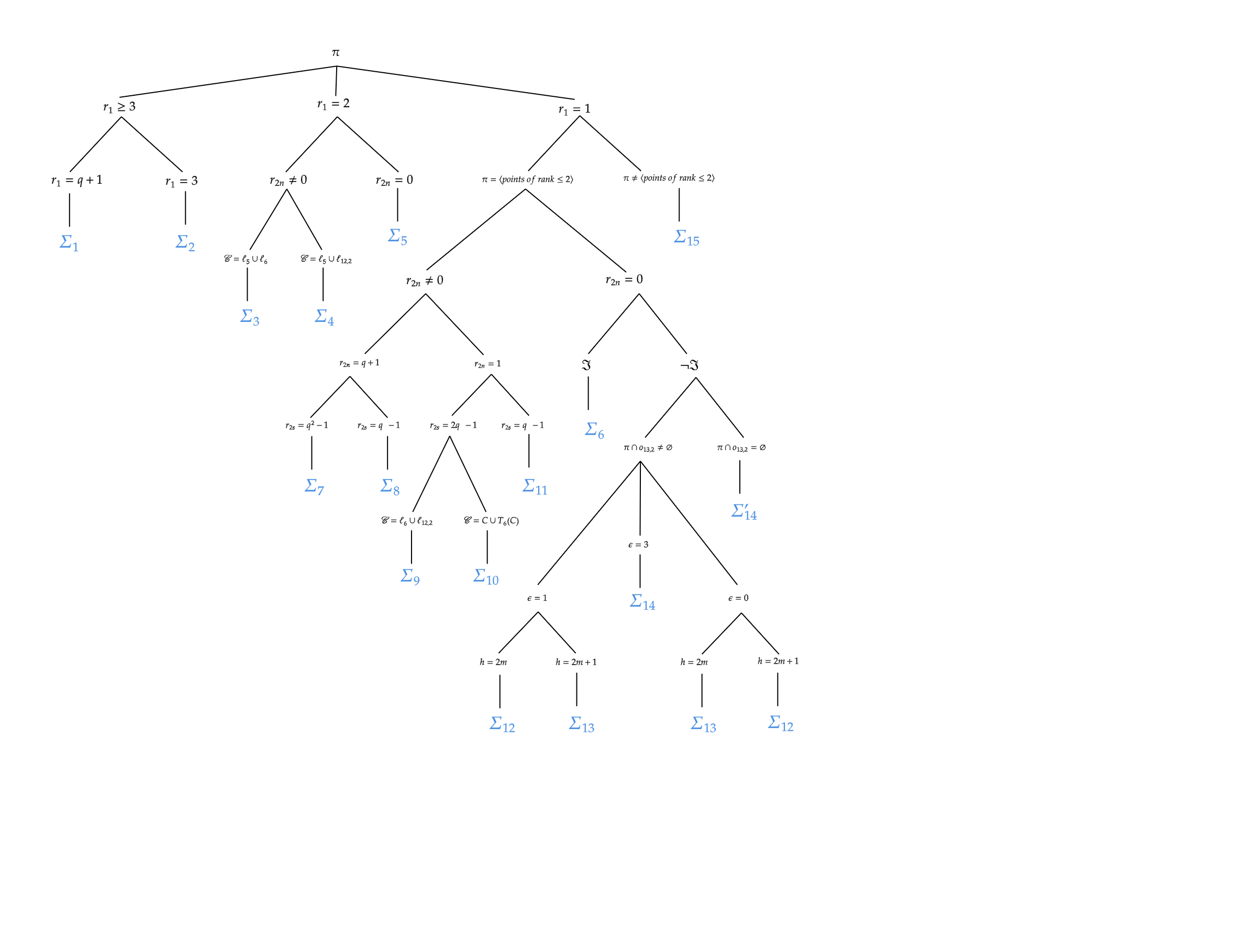}
\vspace{-5cm}\caption{Decision tree for $K$-orbits of planes meeting the Veronese variety in at least one point.}
\label{fig:tree}
\end{figure}

\subsection{Some non-existence results}

We start by proving non-existence results for planes with certain rank distributions. These results are obtained by giving bounds on the number of rank-2 points in planes of $\PG(5,q)$ meeting $\cV(\Fq)$ in one or two points.

\begin{Lemma}\label{aux}
There is no plane in $\PG(5,q)$ with rank distribution $[1,0,q^2+q]$.
\end{Lemma}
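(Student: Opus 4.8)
The plan is to assume, for contradiction, that such a plane $\pi$ exists, so that $\pi$ contains exactly one rank-$1$ point $P\in\cV(\Fq)$, no rank-$2$ points whatsoever, and $q^2+q$ rank-$3$ points. First I would translate the rank condition into the language of the associated cubic curve $\mathscr{C}(\pi)=\cZ(\det M_\pi)$: the $\Fq$-rational points of $\mathscr{C}(\pi)$ are exactly the points of rank at most $2$ in $\pi$, so the hypothesis forces $\mathscr{C}(\pi)$ to have exactly one $\Fq$-rational point, namely the one corresponding to $P$. Thus $\mathscr{C}(\pi)$ is a plane cubic over $\Fq$ with a single rational point. I would then use the standard fact about rational points on plane cubics over finite fields (a consequence of the Hasse--Weil bound together with the analysis of singular cubics) to see how restrictive this is. An absolutely irreducible cubic has at least $q+1-2\sqrt q>1$ points for $q\ge 4$, and a reducible/non-reduced cubic always contains a line or a conic over $\Fq$ (or a pair of conjugate lines, whose single rational point is their intersection), and any line or conic over $\Fq$ contributes at least $q+1$ rational points; so in all cases $\mathscr{C}(\pi)$ has either $1$ point (only when it is, up to coordinates, a conjugate-line-pair situation, or an irreducible cuspidal-type curve with a unique rational point) or at least $q$ points. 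The small cases $q=2,4$ would be checked separately, but since the lemma is stated for all even $q$ and the $q=2$ case is handled elsewhere, I would be careful to phrase the curve-counting argument so that it applies uniformly or to dispatch the exceptional $q$ by hand.

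The delicate point is that the curve $\mathscr{C}(\pi)$ can genuinely have a unique rational point (e.g. $\cZ(X^3+\cdots)$ of additive/cuspidal type, or an irreducible cubic meeting a conjugate line-pair), so the cubic-curve count alone does not immediately yield a contradiction; one must exclude these by remembering that the rational point $P$ of $\mathscr{C}(\pi)$ has rank $1$, not merely rank $\le 2$. So the core of the argument is geometric rather than just enumerative: I would show that a plane $\pi$ meeting $\cV(\Fq)$ in a single point $P$ necessarily contains rank-$2$ points. The natural mechanism is the tangent plane $T_P$ to $\cV(\Fq)$ at $P$: $T_P$ is a plane, every point of $T_P$ other than $P$ has rank $2$ (these are the points on tangent lines to conics of $\cV$ through $P$), and $T_P\cap\pi$ is a subspace through $P$ of dimension $\dim T_P+\dim\pi-\dim\langle T_P,\pi\rangle\ge 2+2-5=-1$; so if $\pi\ne T_P$ and $\pi\not\subseteq T_P$ this intersection could be just $\{P\}$, and a more refined incidence count is needed. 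A cleaner route: through $P$ there pass $q+1$ conics of $\cV(\Fq)$ and $q+1$ tangent lines (one per conic), all lying in $T_P$; the secant lines and tangent lines of $\cV$ at $P$ sweep out rank-$2$ points, and one shows by a dimension/counting argument that $\pi$, being a plane through $P$ inside $\PG(5,q)$, must contain a rank-$2$ point — for instance by intersecting $\pi$ with a suitably chosen hyperplane meeting $\cV$ in a conic through $P$, forcing a whole conic's worth or at least one further rank-$\le 2$ point in $\pi$, which then cannot be rank $1$ by hypothesis, hence is rank $2$.

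Concretely, the step I expect to be the main obstacle is ruling out the "unique-rational-point cubic" configurations geometrically, i.e. proving that no plane of $\PG(5,q)$ through exactly one point of $\cV(\Fq)$ can avoid rank-$2$ points entirely. I would attack this by a direct classification of planes $\pi$ with $|\pi\cap\cV(\Fq)|=1$ up to the action of $K$ near $P$: fixing $P=\nu(1,0,0)$ with matrix representation $E_{11}$, the stabiliser of $P$ in $K$ acts on the planes through $P$, and one can normalise the two remaining generators of $\pi$ modulo $\langle E_{11}\rangle$ to a short list of canonical pairs of symmetric matrices; for each, compute $\det M_\pi$ as a binary cubic form and check that it acquires a rational zero away from $P$ or that $\pi$ meets the nucleus plane $\mathcal N$ (whose points all have rank $2$). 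The bookkeeping here is the bulk of the work, but it is finite and mechanical. Assembling these pieces — curve-count to bound the rational points, plus the local analysis at $P$ to force the existence of a rank-$2$ point — contradicts the assumed distribution $[1,0,q^2+q]$, completing the proof.
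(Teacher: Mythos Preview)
Your exploration eventually lands on the right endgame --- normalise at the rank-$1$ point and compute the cubic --- but the route is circuitous and you have not spotted the shortcut that makes the computation almost trivial. The Hasse--Weil branch and the tangent-plane branch are, as you yourself concede, inconclusive on their own; both paragraphs can be dropped without loss.

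The paper's argument is one reduction plus one short computation. The reduction: if $\pi$ has rank distribution $[1,0,q^2+q]$ then every line of $\pi$ through the rank-$1$ point $Q_1$ has point-orbit distribution $[1,0,0,q]$ and hence lies in the line-orbit $o_9$ (Table~\ref{tableoflines}). So one may take $\langle Q_1,Q_2\rangle$ to be the standard representative of $o_9$, with $Q_1=(1,0,0,0,0,0)$ and $Q_2=(0,0,1,1,0,0)$, and choose a third generator $Q_3=(0,1,0,a,b,c)$ of rank~$3$ (forcing $c\neq 0$). The computation: the associated cubic is then $XF(Y,Z)+G(Y,Z)$ with $F=(b^2+ac)Y^2+cYZ$ a binary quadratic. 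For each $(y_0:z_0)\in\PG(1,q)$ with $F(y_0,z_0)\neq 0$ one solves uniquely for $X$ and obtains a point of rank $\le 2$ in $\pi$ different from $Q_1$; since no such point exists, $F$ must vanish identically, whence $b=c=0$, contradicting $\operatorname{rank}(Q_3)=3$.

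The ingredient your plan misses is precisely the use of the already-established line classification. Invoking $o_9$ replaces your ``short list of canonical pairs of symmetric matrices'' by a \emph{single} canonical pair, and the structural observation that the cubic then has the shape $XF(Y,Z)+G(Y,Z)$ with $\deg F\le 2$ replaces any case-by-case enumeration: a non-zero binary quadratic $F$ vanishes at at most two points of $\PG(1,q)$, so at least $q-1>0$ values of $(Y:Z)$ would produce forbidden rank-$\le 2$ points. That is the whole proof --- no Hasse--Weil, no tangent-plane incidences, no listing of canonical forms. Your final paragraph would \emph{work} if carried out, but it reproduces by hand a piece of the line classification that is already available; it is worth looking first at what the ambient classification of lower-dimensional subspaces buys you before embarking on a fresh normalisation.
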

\begin{proof}
Let $Q_1$ be the unique rank-1 point in a plane $\pi\subset \PG(5,q)$ having no rank-2 points. By inspecting the point-orbit distributions of lines of $\PG(5,q)$ from Table \ref{tableoflines}, we conclude that all lines through $Q_1$ in $\pi$ must be of type $o_9$. Therefore, we may assume without loss of generality that $\pi=\langle Q_1,Q_2,Q_3\rangle$, where $\langle Q_1(1,0,0,0,0,0),Q_2(0,0,1,1,0,0)\rangle$ is the representative of the line orbit $o_9$ in \cite[Table 2]{lines} and $Q_3$ is a point of rank 3 with homogeneous coordinates $(0,a,0,b,c,d) \in \Fq^6$. As $Q_3$ has rank three, it follows that $a, d\neq 0$. Thus, we may take  $Q_3$ as the point $(0,1,0,a,b,c)$ for some $a,b,c \in \Fq$ with $c \neq 0$ and the representative of $\pi$ becomes 
 $$\begin{bmatrix} x&y&z\\y&ay+z&by\\z&by&cy \end{bmatrix}.$$
The cubic curve $\mathscr{C}(\pi)$ then has the form $XF(Y,Z)+G(Y,Z)$, where $$F(Y,Z)=b^2Y^2+acY^2+cYZ,\:\: G(Y,Z)=aYZ^2+cY^3+Z^3.$$ 
Since $F$ defines a quadric on $\cZ(F)$ in $\PG(1,q)$ and each point of $\PG(1,q)$ not on $\cZ(F)$
 corresponds to a point in $\pi$ of rank $2$, it follows that $F$ must be identically zero. Therefore, $b=c=0$, a contradiction.

\end{proof}

\begin{Lemma}\label{2n}
There is no plane in $\PG(5,q)$ with rank distribution $[2,r_2<q,r_3]$.
\end{Lemma}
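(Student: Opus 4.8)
The plan is to argue exactly as in Lemma~\ref{aux}: suppose $\pi$ is a plane with two rank-1 points $Q_1,Q_2$, put it in normal form, write out the associated cubic $\mathscr{C}(\pi)$, and count $\bF_q$-rational points on a suitable conic extracted from that cubic. First, I would use the transitivity of $K$ on pairs of points of $\cV(\bF_q)$ (any two lie on a unique conic of $\cV(\bF_q)$) to fix $Q_1=\nu(1,0,0)$ and $Q_2=\nu(0,0,1)$, so $Q_1=(1,0,0,0,0,0)$ and $Q_2=(0,0,0,0,0,1)$; the line $\langle Q_1,Q_2\rangle$ is then a secant, of type $o_5$ or similar, and in particular contains further rank-$2$ points. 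A third point $Q_3$ of rank $3$ spanning $\pi$ with $Q_1,Q_2$ can be taken with matrix $\begin{bmatrix} 0 & a & b \\ a & e & d \\ b & d & 0\end{bmatrix}$ (a generic rank-3 completion), and after normalizing by the stabilizer of the pair $(Q_1,Q_2)$ in $K$ (which still acts, scaling the first and third coordinate directions and adding multiples of $Q_1,Q_2$) one reduces the free parameters. The representative $\pi$ then has matrix
\[
\begin{bmatrix} x & \alpha y & \beta y \\ \alpha y & e\,y & \gamma y \\ \beta y & \gamma y & z\end{bmatrix}
\]
for suitable reduced constants, and $\mathscr{C}(\pi)=\cZ(\det)$ has the shape $xz\,F_1(y) + z\,F_2(y) + x\,F_3(y) + F_4(y)$ where, crucially, the coefficient of the monomials $XZ\cdot(\text{linear in }Y)$ comes only from $x z\cdot(ey)$-type terms.

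The key step is the following counting argument, copied from Lemma~\ref{aux}. Write $\mathscr{C}(\pi)$ grouping by powers of $X$: it has the form $X\,P(Y,Z)+Q(Y,Z)$ where $P$ is a binary quadratic form and $Q$ a binary cubic (this happens because the $(1,1)$-entry is the only entry depending on $x$, so $\det$ is affine-linear in $x$). For every point $(y:z)\in\PG(1,q)$ with $P(y,z)\neq 0$, there is a unique $x$ solving $xP(y,z)+Q(y,z)=0$, giving a point of rank $\le 2$ in $\pi$; since $Q_1,Q_2$ are the only rank-$1$ points, all but at most two such points have rank exactly $2$. Hence $r_2 \ge |\{(y:z)\in\PG(1,q): P(y,z)\neq 0\}| - 2$. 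A nonzero binary quadratic form over $\bF_q$ vanishes at $0$, $1$, or $2$ points of $\PG(1,q)$, so if $P\not\equiv 0$ then $P$ is nonzero at $\ge q-1$ of the $q+1$ points, giving $r_2 \ge q-3$; a sharper bookkeeping — using that the two rank-$1$ points $Q_1,Q_2$ already sit at specific $(y:z)$, namely $(y:z)=(0:1)$ and $(1:0)$, and checking whether $P$ vanishes there — should push this to $r_2\ge q-1 > q$ being contradicted only when... — actually the clean statement is that $P\equiv 0$ is forced, and then the $Y$-free and higher-$Y$ coefficients collapse, forcing $Q_3$ to have rank $<3$, a contradiction, exactly as in Lemma~\ref{aux}.

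The main obstacle, and the only place real work is needed, is the normalization: I must pin down the correct canonical form(s) for $\pi=\langle Q_1,Q_2,Q_3\rangle$ with $Q_1,Q_2\in\cV(\bF_q)$ and $Q_3$ of rank $3$, keeping track of which parameters survive the residual group action, so that the binary quadratic form $P(Y,Z)$ appearing as the coefficient of $X$ in $\det$ is displayed explicitly and one can see directly that $P\equiv 0$ forces degeneracy. There may be a small case split according to whether the secant line $\langle Q_1, Q_2\rangle$ is entirely contained in $\pi$ with all its rank-$2$ points generic, or whether $Q_3$ can be chosen to make some off-diagonal entries vanish — but since we only need the \emph{lower bound} $r_2\ge q$ to derive the contradiction, most of the fine structure of the orbit is irrelevant and the argument should go through uniformly. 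I expect the proof to be essentially one short paragraph once the normal form is fixed, mirroring Lemma~\ref{aux} almost verbatim.
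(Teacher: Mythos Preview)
Your plan has a genuine gap in the counting step, and it also overcomplicates matters compared to the paper's argument.

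The paper's proof is one line of geometry plus one computation: the secant $\langle Q_1,Q_2\rangle$ is entirely of rank~$\le 2$, so it already contributes $q-1$ rank-$2$ points, giving $r_2\ge q-1$ immediately. If $r_2<q$ then $r_2=q-1$ and \emph{all} rank-$\le 2$ points lie on that secant; hence the cubic $\mathscr C(\pi)$ has its $\bF_q$-rational locus contained in a single line, forcing it to degenerate. Normalising with $Q_1=\nu(e_1)$, $Q_2=\nu(e_2)$ and a rank-$3$ point $Q_3$, the cubic is computed explicitly and one reads off that this degeneracy forces two entries of $M_{Q_3}$ to vanish, dropping its rank below~$3$.

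Your route via $\det = X\,P(Y,Z)+Q(Y,Z)$ can be made to work, but as written the bookkeeping is wrong. Your bound $r_2 \ge |\{(y{:}z):P(y,z)\ne 0\}|-2$ only counts points over directions with $P\ne 0$, and it \emph{completely omits} the secant line $y=0$: that line corresponds to $(y{:}z)=(0{:}1)$, which is a \emph{common} zero of both $P$ and $Q$ (check: $P(Y,Z)=Y(eZ+\gamma^2Y)$ and $Q(Y,Z)=Y^2(\alpha^2Z+e\beta^2Y)$), so the whole line lies on the cubic and carries $q-1$ further rank-$2$ points you never added in. Moreover, neither $Q_1=(1{:}0{:}0)$ nor $Q_2=(0{:}0{:}1)$ is among the points you are counting (the first has $(y,z)=(0,0)$; the second sits at $(y{:}z)=(0{:}1)$ where $P$ vanishes), so the ``$-2$'' is spurious. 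With the secant included you get $r_2 \ge |\{P\ne 0\}| + (q-1) \ge (q-1)+(q-1)=2q-2\ge q$ whenever $P\not\equiv 0$, and $P\equiv 0$ (i.e.\ $e=\gamma=0$) indeed drops the rank of $Q_3$. So the argument closes --- but only after you put back exactly the secant observation that the paper uses from the start. The sentence ``$r_2\ge q-1>q$'' in your sketch is simply false and signals that the counting was not under control.
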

\begin{proof}
Let $Q_1, Q_2 \in \pi\cap \mathcal{V}(\Fq)$. Since points on the line $\langle Q_1,Q_2\rangle$ have rank at most $2$, it follows that $\pi$ has at least $q-1$ rank-$2$ points. Assume by way of contradiction that $r_2<q$. Then $r_2=q-1$, and all rank-$2$ points in $\pi$ lie on the line $\langle Q_1,Q_2\rangle$. Consequently, the cubic curve $\mathscr{C}$ defining points of rank at most $2$ in $\pi$ is the triple line $\langle Q_1,Q_2\rangle$. Assume, without loss of generality, that $\pi=\langle Q_1,Q_2,Q_3\rangle$ where $Q_1=\nu(e_1)$, $Q_2=\nu(e_2)$ and $Q_3$ is the point of rank 3 with corresponding matrix $$M_{Q_3}=\begin{bmatrix}
0&a&b\\a&0&c\\b&c&d
\end{bmatrix},$$ for some $a,b,c,d \in \Fq$. By the above it follows that the cubic curve $\mathscr{C}=\mathcal{Z}( dXYZ+c^2XZ^2+a^2dZ^3+b^2YZ^2) $ associated with $\pi$ is a triple line. But this implies that $c=d=0$, which is a contradiction since the rank of $M_{Q_3}$ is $3$.
\end{proof}

\subsection{Planes containing at least three rank-$1$ points}\label{atleast3}

Let $\pi$ be a plane in $\PG(5,q)$ with at least three rank-$1$ points. As $\cV(\Fq)$ is a cap, it follows that no three rank-$1$ points in $\pi$ are collinear. Thus, $\pi$ can be viewed as $\pi=\langle Q_1,Q_2,Q_3\rangle$ where $Q_i=\nu(q_i)$ for $1\leq i\leq 3$. We differentiate between the following two possibilities:\\\par

$(i)$ If $q_1,q_2$ and $q_3$ are collinear in $\PG(2,q)$, then $Q_1, Q_2, Q_3 \in\mathcal{C}( Q_1,Q_2)$, and $\pi$ is a conic plane of $\cV(\Fq)$. Since $\PGL(3,q)$ acts transitively on lines in $\PG(2,q)$, $K$ acts transitively on conic planes. We denote this orbit by $\Sigma_1$. By taking $\langle q_1,q_2\rangle$ as the line $\langle e_1,e_2\rangle$ we obtain the following representative
 $$\Sigma_1:\begin{bmatrix} x&y&.\\y&z&.\\.&.&.          \end{bmatrix}.$$ \par
 \begin{Lemma}
 The point-orbit distribution of a plane in $\Sigma_1$ is $[q+1,1,q^2-1,0]$.
 \end{Lemma}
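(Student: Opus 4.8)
The plan is to compute the point-orbit distribution of the representative plane
$$\pi=\begin{bmatrix} x&y&.\\y&z&.\\.&.&. \end{bmatrix}$$
directly, using the correspondence between ranks of points and ranks of the associated symmetric matrices, together with the fact that $K$ acts transitively on conic planes so that this single computation suffices. First I would observe that a general point of $\pi$ corresponds to the matrix $M_{(x,y,z)}$ with only the top-left $2\times 2$ block nonzero, so its rank is exactly the rank of $\begin{bmatrix} x&y\\y&z\end{bmatrix}$; in particular no point of $\pi$ has rank $3$, which already forces $r_3=0$ and is consistent with $\pi$ being a conic plane (it meets $\cV(\Fq)$ in a conic, hence lies in a hyperplane meeting $\cV$ only in that conic).

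Next I would count the rank-$1$ points: these are the $(x,y,z)\in\PG(2,q)$ with $xz=y^2$ and $(x,y,z)\neq(0,0,0)$, which is a non-singular conic in $\PG(2,q)$ and hence has $q+1$ points. So $r_1=q+1$, matching $|\pi\cap\cV(\Fq)|=q+1$ for a conic plane. The remaining $q^2+q+1-(q+1)=q^2$ points of $\pi$ have rank $2$. To split these into $r_{2n}$ and $r_{2s}$ I would use the description of the nucleus plane $\mathcal{N}=\cZ(Y_0,Y_3,Y_5)$ as the symmetric matrices with zero diagonal: a rank-$2$ point of $\pi$ lies in $\mathcal{N}$ iff $x=z=0$, i.e. it is the single point with matrix $\begin{bmatrix}0&1\\1&0\end{bmatrix}$ in the block, which indeed has rank $2$. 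Hence $r_{2n}=1$ and $r_{2s}=q^2-1$, giving the claimed distribution $[q+1,1,q^2-1,0]$.

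I expect the computation itself to be entirely routine; there is no real obstacle here beyond bookkeeping. The one point that deserves a sentence of care is the claim that every point of $\pi$ outside the conic $xz=y^2$ genuinely has rank exactly $2$ (not $0$): this is immediate since such a point of $\PG(2,q)$ has $(x,y,z)\neq(0,0,0)$, so the $2\times2$ block is a nonzero symmetric matrix and its rank is $1$ or $2$, and it is $1$ precisely on the conic. A secondary point worth noting, to justify that this determines the orbit $\Sigma_1$ rather than just one plane, is the transitivity of $K$ on conic planes established just above in case $(i)$; but since the lemma only asserts the point-orbit distribution of a plane in $\Sigma_1$, and point-orbit distributions are $K$-invariant, the single computation is all that is required.
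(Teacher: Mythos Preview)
Your proposal is correct and follows essentially the same approach as the paper: identify the rank-$1$ points as the conic $\cZ(XZ+Y^2)$, observe that all remaining $q^2$ points have rank $2$, and check that the unique point in $\pi\cap\mathcal{N}$ is the one parametrized by $(0,1,0)$. Your write-up is simply more detailed than the paper's terse version, but the argument is the same.
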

 \begin{proof}
 Points of rank one in $\Sigma_1$ correspond to points on the quadric $\mathcal{Z}(XZ+Y^2)$. The remaining $q^2$ points in $\Sigma_1$ are of rank two, where only the point parametrized by $(x,y,z)=(0,1,0)$ is contained in the nucleus plane $\mathcal{N}$. Therefore, the point-orbit distribution of a plane in $\Sigma_1$ is $[q+1,1,q^2-1,0]$.
 \end{proof}

$(ii)$ If $q_1,q_2$ and $q_3$ are non-collinear in $\PG(2,q)$, then without loss of generality we may take $q_i=\langle e_i\rangle$ for $1\leq i\leq 3$. This gives a new plane orbit $\Sigma_2$ whose representative is 
 $$\Sigma_2:\begin{bmatrix} x&.&.\\.&y&.\\.&.&z          \end{bmatrix}.$$
 
  \begin{Lemma}
 The point-orbit distribution of a plane in $\Sigma_2$ is $[3,0,3q-3,q^2-2q+1]$ and $\Sigma_1\neq \Sigma_2$.
 \end{Lemma}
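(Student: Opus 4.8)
The statement has two parts: computing the point-orbit distribution of $\Sigma_2$, and showing $\Sigma_1 \neq \Sigma_2$. For the first part, the plan is to work directly with the representative
$$\begin{bmatrix} x&.&.\\.&y&.\\.&.&z \end{bmatrix}.$$
A point of this plane has rank $1$ precisely when two of $x,y,z$ vanish, giving exactly the three points $\nu(e_1),\nu(e_2),\nu(e_3)$; hence $r_1 = 3$. A point has rank $\leq 2$ precisely when $xyz = 0$, so the associated cubic curve $\mathscr{C}(\pi)$ is $\cZ(XYZ)$, the union of the three coordinate lines (a triangle). The $\Fq$-rational points on this triangle number $3(q+1) - 3 = 3q$ (the three vertices each counted once), of which $3$ have rank $1$; thus there are $3q - 3$ points of rank exactly $2$. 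Since every matrix in the plane is diagonal, none of them has zero diagonal unless it is the zero matrix, so no rank-$2$ point lies in the nucleus plane $\mathcal{N}$; hence $r_{2n} = 0$ and $r_{2s} = 3q-3$. Finally $r_3 = (q^2+q+1) - 3 - (3q-3) = q^2 - 2q + 1$. This yields the claimed distribution $[3,0,3q-3,q^2-2q+1]$.

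For the inequality $\Sigma_1 \neq \Sigma_2$, the cleanest argument is to compare point-orbit distributions: $\Sigma_1$ has distribution $[q+1,1,q^2-1,0]$ by the previous lemma, while $\Sigma_2$ has $[3,0,3q-3,q^2-2q+1]$, and these differ (for instance $r_1 = q+1 \neq 3$ once $q > 2$, and in any case $r_3 = 0 \neq q^2-2q+1$). Since the point-orbit distribution is a $K$-invariant (being a count of incidences with $K$-orbits of points), the two planes cannot lie in the same $K$-orbit.

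\textbf{Main obstacle.} There is essentially no obstacle here — the computation is routine. The only point requiring a small amount of care is the count of $\Fq$-points on the triangle $\cZ(XYZ)$: one must remember not to triple-count the vertices, which is why the count is $3(q+1) - 3$ rather than $3(q+1)$. One should also note that for $q = 2$ this lemma (like the rest of Table~\ref{tableplanes}) is excluded, so no degeneracy from small fields arises; for $q > 2$ the distributions of $\Sigma_1$ and $\Sigma_2$ are manifestly different.
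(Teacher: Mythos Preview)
Your proposal is correct and follows essentially the same approach as the paper: identify the cubic $\mathscr{C}_2=\cZ(XYZ)$ as a triangle, count its $3q$ rational points, note the three vertices are the rank-$1$ points, observe that diagonal matrices never lie in the nucleus plane $\cZ(Y_0,Y_3,Y_5)$, and conclude $\Sigma_1\neq\Sigma_2$ by comparing point-orbit distributions. Your write-up is slightly more explicit about the triangle count and the nucleus-plane argument, but the underlying reasoning is identical.
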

 \begin{proof}
 Points of rank at most two in $\Sigma_2$ correspond to points on the cubic curve $\mathscr{C}_2=\mathcal{Z}(XYZ)$, which is the union of three non-concurrent lines. The rank-1 points are the vertices of the triangle. The remaining $3q-3$ points on $\mathscr{C}_2$ correspond to rank-$2$ points in $\Sigma_2$, and none of these is contained in the nucleus plane $\mathcal{Z}(Y_0,Y_3,Y_5)$. Therefore, the point-orbit distribution of a plane in $\Sigma_2$ is $[3,0,3q-3,q^2-2q+1]$ and $\Sigma_1\neq\Sigma_2$.
  \end{proof}

 \subsection{Planes containing exactly two rank-$1$ points}\label{2rankone}
In this section we consider planes of $\PG(5,q)$ intersecting the Veronese surface in exactly two points. Let $\pi$ be such a plane containing the rank-1 points $Q_1$ and $Q_2$. By Lemma \ref{2n}, there exists a rank-$2$ point $Q_3$ in $\pi$ not lying on the line $\langle Q_1,Q_2\rangle$. Let $U=\mathcal{C}(Q_1,Q_2)\cap \mathcal{C}(Q_3)$ where $\mathcal{C}(Q_1,Q_2)$ and $\mathcal{C}(Q_3)$ are the two conics associated with $\{Q_1,Q_2\}$ and $Q_3$ as described in Section \ref{pre}. We study the cases where $U \in \{Q_1,Q_2\}$ and $U \not\in \{Q_1,Q_2\}$ separately.\\\par

\begin{figure}[h]
\centering
\tikzset{every picture/.style={line width=0.75pt}}

\begin{tikzpicture}[x=0.75pt,y=0.75pt,yscale=-1,xscale=1]

\draw   (77.23,111.3) .. controls (70.5,102.53) and (77.49,85.89) .. (92.83,74.13) .. controls (108.17,62.37) and (126.05,59.94) .. (132.77,68.7) .. controls (139.5,77.47) and (132.51,94.11) .. (117.17,105.87) .. controls (101.83,117.63) and (83.95,120.06) .. (77.23,111.3) -- cycle ;
\draw   (78.9,119.67) .. controls (88.16,113.64) and (104.21,121.89) .. (114.76,138.09) .. controls (125.31,154.29) and (126.35,172.3) .. (117.1,178.33) .. controls (107.84,184.36) and (91.79,176.11) .. (81.24,159.91) .. controls (70.69,143.71) and (69.65,125.7) .. (78.9,119.67) -- cycle ;
\draw  [fill={rgb, 255:red, 0; green, 0; blue, 0 }  ,fill opacity=1 ] (84.14,117.28) .. controls (84.14,116.18) and (85.31,115.28) .. (86.75,115.28) .. controls (88.2,115.28) and (89.37,116.18) .. (89.37,117.28) .. controls (89.37,118.39) and (88.2,119.28) .. (86.75,119.28) .. controls (85.31,119.28) and (84.14,118.39) .. (84.14,117.28) -- cycle ;
\draw   (74.2,36) -- (189,36) -- (139.8,117) -- (25,117) -- cycle ;
\draw   (137.47,116.88) -- (188.35,205.49) -- (75.89,205.6) -- (25,117) -- cycle ;
\draw  [fill={rgb, 255:red, 0; green, 0; blue, 0 }  ,fill opacity=1 ] (130.14,86.28) .. controls (130.14,85.18) and (131.31,84.28) .. (132.75,84.28) .. controls (134.2,84.28) and (135.37,85.18) .. (135.37,86.28) .. controls (135.37,87.39) and (134.2,88.28) .. (132.75,88.28) .. controls (131.31,88.28) and (130.14,87.39) .. (130.14,86.28) -- cycle ;
\draw   (260.23,111.3) .. controls (253.5,102.53) and (260.49,85.89) .. (275.83,74.13) .. controls (291.17,62.37) and (309.05,59.94) .. (315.77,68.7) .. controls (322.5,77.47) and (315.51,94.11) .. (300.17,105.87) .. controls (284.83,117.63) and (266.95,120.06) .. (260.23,111.3) -- cycle ;
\draw   (261.9,119.67) .. controls (271.16,113.64) and (287.21,121.89) .. (297.76,138.09) .. controls (308.31,154.29) and (309.35,172.3) .. (300.1,178.33) .. controls (290.84,184.36) and (274.79,176.11) .. (264.24,159.91) .. controls (253.69,143.71) and (252.65,125.7) .. (261.9,119.67) -- cycle ;
\draw  [fill={rgb, 255:red, 0; green, 0; blue, 0 }  ,fill opacity=1 ] (267.14,117.28) .. controls (267.14,116.18) and (268.31,115.28) .. (269.75,115.28) .. controls (271.2,115.28) and (272.37,116.18) .. (272.37,117.28) .. controls (272.37,118.39) and (271.2,119.28) .. (269.75,119.28) .. controls (268.31,119.28) and (267.14,118.39) .. (267.14,117.28) -- cycle ;
\draw   (257.2,36) -- (372,36) -- (322.8,117) -- (208,117) -- cycle ;
\draw   (320.47,116.88) -- (371.35,205.49) -- (258.89,205.6) -- (208,117) -- cycle ;
\draw  [fill={rgb, 255:red, 0; green, 0; blue, 0 }  ,fill opacity=1 ] (313.14,86.28) .. controls (313.14,85.18) and (314.31,84.28) .. (315.75,84.28) .. controls (317.2,84.28) and (318.37,85.18) .. (318.37,86.28) .. controls (318.37,87.39) and (317.2,88.28) .. (315.75,88.28) .. controls (314.31,88.28) and (313.14,87.39) .. (313.14,86.28) -- cycle ;
\draw  [fill={rgb, 255:red, 0; green, 0; blue, 0 }  ,fill opacity=1 ] (273.21,74.13) .. controls (273.21,73.02) and (274.39,72.13) .. (275.83,72.13) .. controls (277.28,72.13) and (278.45,73.02) .. (278.45,74.13) .. controls (278.45,75.23) and (277.28,76.13) .. (275.83,76.13) .. controls (274.39,76.13) and (273.21,75.23) .. (273.21,74.13) -- cycle ;

\draw (140,190.4) node [anchor=north west][inner sep=0.75pt]  [font=\scriptsize]  {$\langle \mathcal{C}( Q_{3}) \rangle $};
\draw (120.2,38.4) node [anchor=north west][inner sep=0.75pt]  [font=\scriptsize]  {$\langle \mathcal{C}( Q_{1} ,Q_{2}) \rangle $};
\draw (82,100.4) node [anchor=north west][inner sep=0.75pt]  [font=\scriptsize]  {$Q_{1}$};
\draw (136,77.4) node [anchor=north west][inner sep=0.75pt]  [font=\scriptsize]  {$Q_{2}$};
\draw (266,101.4) node [anchor=north west][inner sep=0.75pt]  [font=\scriptsize]  {$U$};
\draw (259,57.4) node [anchor=north west][inner sep=0.75pt]  [font=\scriptsize]  {$Q_{1}$};
\draw (319,77.4) node [anchor=north west][inner sep=0.75pt]  [font=\scriptsize]  {$Q_{2}$};
\draw (303.2,38.4) node [anchor=north west][inner sep=0.75pt]  [font=\scriptsize]  {$\langle \mathcal{C}( Q_{1} ,Q_{2}) \rangle $};
\draw (324,191.4) node [anchor=north west][inner sep=0.75pt]  [font=\scriptsize]  {$\langle \mathcal{C}( Q_{3}) \rangle $};

\end{tikzpicture}
\caption{Configurations associated with cases $(i)$ and $(ii)$, respectively.}
\end{figure}
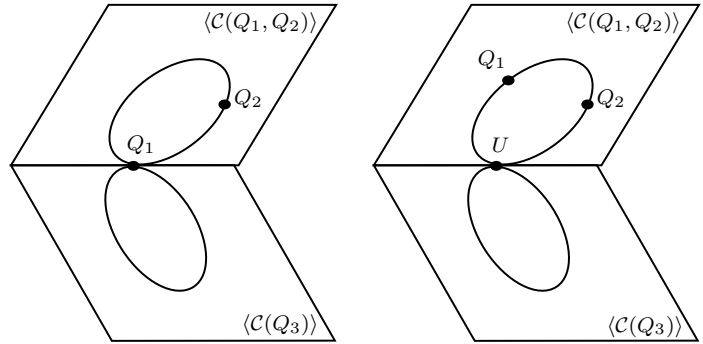

\boxed{$(i)$} If $U \in \{Q_1,Q_2\}$, then without loss of generality we may assume that $U=Q_1$. Let $q_1$, $q_2$ and $l_3$ be the preimages under $\nu$ of $Q_1$, $Q_2$ and $\mathcal{C}(Q_3)$ respectively. 
As the elation group $E(q_1,\langle q_1,q_2\rangle)$, with centre $q_1$ and axis $\langle q_1,q_2\rangle$, acts transitively on the affine points of $\PG(2,q)\setminus \langle q_1,q_2\rangle$, it follows that we may fix $\langle q_1,q_2\rangle$ and $l_3$ as $\langle e_1,e_2\rangle$ and $\langle e_1,e_3\rangle$. Hence the points $Q_1,Q_2$ can be represented by 
$$M_{Q_1}=\begin{bmatrix}1&0&0\\0&0&0\\0&0&0  \end{bmatrix}\:\:\:\: \text{and} \:\:\:\:M_{Q_2}=\begin{bmatrix}0&0&0\\0&1&0\\0&0&0  \end{bmatrix}. $$

Note that $Q_3$ must lie on the tangent $T_{Q_1}(\mathcal{C}(Q_3))$ since otherwise the plane $\pi$ would meet $\cV(\Fq)$ in at least three points, contradicting the assumption that $\pi$ contains exactly two points of rank one. Therefore, $\pi$ is completely determined by $\langle Q_1,Q_2\rangle$ and $T_{Q_1}(\mathcal{C}(Q_3))=\mathcal{Z}(Y_5)$, where $\mathcal{C}(Q_3)=\mathcal{Z}(Y_0Y_5+Y_2^2)\cap\mathcal{Z}(Y_1,Y_3,Y_4)$, leading to the $K$-orbit $\Sigma_3$ 
represented by
$$\Sigma_3:\begin{bmatrix} x&.&z\\.&y&.\\z&.&.        \end{bmatrix}.$$\par

\begin{Lemma}\label{sigma3}
The point-orbit distribution of a plane in $\Sigma_3$ is $[2,1,2q-2,q^2-q]$. In particular, $\Sigma_3 \not\in \{\Sigma_1, \Sigma_2\}$.
\end{Lemma}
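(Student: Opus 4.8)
The plan is to read off the cubic curve associated with the given representative of $\Sigma_3$ and then to run through its $\Fq$-rational points one at a time, sorting them by rank and by incidence with the nucleus plane $\mathcal{N}=\mathcal{Z}(Y_0,Y_3,Y_5)$ (i.e.\ the symmetric matrices with zero diagonal). Since the point-orbit distribution is a $K$-invariant, the separation $\Sigma_3\notin\{\Sigma_1,\Sigma_2\}$ will then be immediate from the first entry of the tuple.

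First I would compute the determinant of the matrix representation of $\Sigma_3$. Expanding along the second row gives $\det\begin{bmatrix} x&0&z\\0&y&0\\z&0&0\end{bmatrix}=yz^2$ (recall $\operatorname{char}\Fq=2$), so $\mathscr{C}(\pi)=\mathcal{Z}(YZ^2)$ is the union of the line $\mathcal{Z}(Y)$ and the double line $\mathcal{Z}(Z)$. As a point set this has $2(q+1)-1=2q+1$ $\Fq$-rational points (the two lines share only $(1,0,0)$, and multiplicity is irrelevant for the count), and these are exactly the points of rank at most $2$ in $\pi$.

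Next I would classify those $2q+1$ points. On $\mathcal{Z}(Z)$ the matrix is $\operatorname{diag}(x,y,0)$: the two points $(1,0,0)$ and $(0,1,0)$ have rank $1$, while the remaining $q-1$ points (both $x,y\neq0$) have rank $2$ and nonzero diagonal, hence lie outside $\mathcal{N}$. On $\mathcal{Z}(Y)$ with $z\neq0$ the matrix $\begin{bmatrix} x&0&z\\0&0&0\\z&0&0\end{bmatrix}$ has rank exactly $2$, since the minor on rows and columns $\{1,3\}$ is $\begin{vmatrix} x&z\\z&0\end{vmatrix}=z^2\neq0$ and the middle row and column vanish; among these $q$ points exactly the one with $x=0$, namely $(0,0,1)$, has zero diagonal and so lies in $\mathcal{N}$, the other $q-1$ lie outside. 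Summing up: $r_1=2$, $r_{2n}=1$, $r_{2s}=(q-1)+(q-1)=2q-2$, and therefore $r_3=(q^2+q+1)-(2q+1)=q^2-q$, which is the claimed distribution $[2,1,2q-2,q^2-q]$.

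Finally, since $\Sigma_1$ and $\Sigma_2$ have $r_1=q+1$ and $r_1=3$ respectively, whereas $\Sigma_3$ has $r_1=2$, and the point-orbit distribution is a $K$-invariant, we get $\Sigma_3\notin\{\Sigma_1,\Sigma_2\}$. I do not anticipate any real obstacle in this argument; the only mild care needed is to handle the double line $\mathcal{Z}(Z)$ correctly (counting $\Fq$-rational points, not with multiplicity) and to avoid double-counting the intersection point $(1,0,0)$ of the two components when tallying the contributions of $\mathcal{Z}(Y)$ and $\mathcal{Z}(Z)$.
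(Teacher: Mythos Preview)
Your proof is correct and follows essentially the same approach as the paper: compute the cubic curve $\mathscr{C}_3=\mathcal{Z}(YZ^2)$, identify the two rank-$1$ points on $\mathcal{Z}(Z)$, the unique point $(0,0,1)$ in $\mathcal{N}\cap\pi_3$, and conclude by comparing $r_1$ with that of $\Sigma_1$ and $\Sigma_2$. Your write-up is simply more explicit in verifying the rank of each point and in handling the overlap of the two components of $\mathscr{C}_3$.
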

\begin{proof}
Let $\pi_3$ be the above representative of $\Sigma_3$. Points of rank at most $2$ in $\pi_3$ correspond to points on the cubic curve $\mathscr{C}_3=\mathcal{Z}(YZ^2)$. Among these $2q+1$ points, there are exactly two rank-$1$ points corresponding to points of $\mathcal{Z}(Z,XY)$ and a unique rank-$2$ point in $\mathcal{N}\cap \pi_3=\mathcal{Z}(X,Y)$ with parametrized coordinates $(0,0,1)$. Therefore, the point-orbit distribution of a plane in $\Sigma_3$ is $[2,1,2q-2,q^2-q]$. In particular, $\Sigma_3 \not \in \{\Sigma_1,\Sigma_2\}$.
\end{proof}

\boxed{$(ii)$} If $U \not\in \{Q_1,Q_2\}$, then without loss of generality, let $q_1=\langle e_1\rangle$, $q_2=\langle e_2\rangle $ and $u=\langle e_1+e_2\rangle$.
By the transitivity properties of the group $\PGL(3,q)$, it follows that we may also fix $l_3=\nu^{-1}(\mathcal{C}(Q_3))$ as  $\langle e_1+e_2,e_3\rangle$. The following three possibilities for $Q_3$ in the conic plane $\langle\mathcal{C}(Q_3)\rangle $ may occur:

$(ii$-$a)$ $Q_3=N(\mathcal{C}(Q_3))$ , $(ii$-$b)$ $Q_3 \in T_{U}(\mathcal{C}(Q_3))\setminus\{N(\mathcal{C}(Q_3)),U\}$ or $(ii$-$c)$ $Q_3 \in \langle\mathcal{C}(Q_3)\rangle \setminus (\mathcal{C}(Q_3)\cup T_{U}(\mathcal{C}(Q_3)))$.\\\par

\boxed{$(ii$-$a)$} In this case 
$Q_3$ has coordinates $(0,0,1,0,1,0)$ and we obtain the orbit represented by
$$\Sigma_4:\begin{bmatrix} x&.&z\\.&y&z\\z&z&.        \end{bmatrix}.$$

\begin{Lemma}\label{sigma4}
The point-orbit distribution of a plane in $\Sigma_4$ is $[2,1,2q-2,q^2-q]$. In particular, $\Sigma_4 \not\in \{\Sigma_1,\Sigma_2,\Sigma_3\}$.
\end{Lemma}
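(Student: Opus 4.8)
The plan is to follow the same scheme used for $\Sigma_1,\Sigma_2,\Sigma_3$: read off the associated cubic curve, count the four types of points by direct inspection of the generic matrix, and then separate $\Sigma_4$ from the earlier orbits by invariants. Writing $M(x,y,z)$ for the generic matrix of the $\Sigma_4$-representative, I would first compute $\det M(x,y,z)=Z^2(X+Y)$ (the sign is irrelevant in characteristic $2$), so that the associated cubic is $\mathscr{C}_4=\mathcal{Z}(XZ^2+YZ^2)$, the union of the repeated line $\mathcal{Z}(Z)$ and the line $\mathcal{Z}(X+Y)$. These two lines meet in one point, so $\pi$ has exactly $2q+1$ points of rank at most $2$.

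Next I would sort those $2q+1$ points by rank. If $Z\neq 0$, then the submatrix of $M$ on rows and columns $\{1,3\}$ has determinant $-Z^2\neq 0$, so such a point has rank $\geq 2$; hence every rank-$1$ point lies on $\mathcal{Z}(Z)$, where $M$ is diagonal, and there are exactly two of them, namely $(1,0,0)$ and $(0,1,0)$. This gives $r_1=2$, consistent with $\pi$ meeting $\cV(\Fq)$ in exactly two points. For the nucleus plane, $\mathcal{N}\cap\pi$ is cut out by the vanishing of the diagonal of $M$, i.e. by $x=y=0$, so it is the single point $(0,0,1)$, whose matrix has rank $2$; hence $r_{2n}=1$. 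Finally $r_{2s}=(2q+1)-2-1=2q-2$ and $r_3=(q^2+q+1)-(2q+1)=q^2-q$, so the point-orbit distribution is $[2,1,2q-2,q^2-q]$, as claimed.

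It remains to show $\Sigma_4\notin\{\Sigma_1,\Sigma_2,\Sigma_3\}$. Against $\Sigma_1$ the first (or last) entry of the point-orbit distribution already differs, and against $\Sigma_2$ the first (or second) entry differs, so these two cases are immediate. The main obstacle is $\Sigma_3$: by Lemma \ref{sigma3} it has exactly the same point-orbit distribution, so a strictly finer $K$-invariant is needed. I would use the pair consisting of the associated cubic together with its rank-$1$ points, which is $K$-invariant: the non-repeated line component of $\mathscr{C}_3=\mathcal{Z}(YZ^2)$, namely $\mathcal{Z}(Y)$, passes through the rank-$1$ point $(1,0,0)$, whereas the non-repeated line component of $\mathscr{C}_4$, namely $\mathcal{Z}(X+Y)$, contains neither of the rank-$1$ points $(1,0,0),(0,1,0)$. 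Equivalently, and closer to the invariants used elsewhere in the paper, one can compare line-orbit distributions: for $q\neq2$ the only lines of the $\Sigma_4$-representative all of whose points have rank at most $2$ are the two components of $\mathscr{C}_4$, which are a secant of type $o_5$ and a line of type $o_{12,2}$, so $\Sigma_4$ contains no line of type $o_6$, while the $\Sigma_3$-representative contains the line $\mathcal{Z}(Y)$, which is of type $o_6$. Either way $\Sigma_3\neq\Sigma_4$.

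I expect the determinant computation and the rank bookkeeping to be entirely routine; the only genuine decision is which extra invariant to invoke to peel $\Sigma_3$ off from $\Sigma_4$, since the point-orbit distributions coincide, and I would keep that step short by phrasing it through the associated cubic together with the set of rank-$1$ points (or, interchangeably, through the presence or absence of a line of type $o_6$).
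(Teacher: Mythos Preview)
Your proof is correct and follows essentially the same approach as the paper: compute the cubic $\mathscr{C}_4=\mathcal{Z}(Z^2(X+Y))$, identify the two rank-$1$ points and the unique nucleus point to obtain the distribution $[2,1,2q-2,q^2-q]$, rule out $\Sigma_1,\Sigma_2$ by point-orbit distribution, and separate $\Sigma_3$ from $\Sigma_4$ via the line types of the cubic's components ($o_5$ together with $o_6$ for $\mathscr{C}_3$, versus $o_5$ together with $o_{12,2}$ for $\mathscr{C}_4$). Your alternative phrasing of this last step in terms of whether the non-repeated component carries a rank-$1$ point is equivalent to the paper's line-type argument and is a perfectly valid $K$-invariant.
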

\begin{proof}
Let $\pi_4$ be the above representative of $\Sigma_4$. The rank-$1$ points in $\pi_4$ are the two points with parametrized coordinates $(1,0,0)$ and $(0,1,0)$. The remaining points on the cubic curve $\mathscr{C}_4=\mathcal{Z}(Z^2(X+Y))$ correspond to points of rank 2, where only the point parametrized by $(0,0,1)$ lies in the nuleus plane. Therefore, the point-orbit distribution of a plane in $\Sigma_4$ is $[2,1,2q-2,q^2-q]$ and $\Sigma_4 \not\in \{\Sigma_1,\Sigma_2\}$ by their different point-orbit distributions. Finally, by observing that $\mathscr{C}_3$, the cubic curve associated with $\pi_3$, is the union of two lines of type $o_5$ and $o_6$, while $\mathscr{C}_4$ is the union of two lines of type $o_5$ and $o_{12,2}$, we can deduce that $\Sigma_3$ and $\Sigma_4$,  which share the same point-orbit distribution, are also distinct. 
\end{proof}

\par

\boxed{$(ii$-$b)$} If $Q_3 \in T_{U}(\mathcal{C}(Q_3))\setminus\{N(\mathcal{C}(Q_3)), U\}$, then $Q_3$ has coordinates $(a,a,1,a,1,0)$ for some $a \in \Fq\setminus\{0\}$. 
It follows that $\pi$, represented by $$\pi_a:\begin{bmatrix} x+az&az&z\\az&y+az&z\\z&z&.\end{bmatrix}$$ for some $a \in \Fq\setminus\{0\}$,  intersects the nucleus plane in a unique point $Q_3'$ with homogeneous coordinates $(0,a,1,0,1,0)$. By considering the two possibilities where $U'=\mathcal{C}(Q_1,Q_2)\cap \mathcal{C}_{Q_3'}$ belongs to $\{Q_1,Q_2\}$ or not, we end up with the plane $\pi=\langle Q_1,Q_2,Q_3'\rangle$ which belongs to one of the orbits $\Sigma_3$ or $\Sigma_4$. Hence, this case will not define a new orbit.
\\

\boxed{$(ii$-$c)$}\label{page55} Finally, if $Q_3 \in \langle\mathcal{C}(Q_3)\rangle \setminus( \mathcal{C}(Q_3)\cup T_{U}(\mathcal{C}(Q_3)))$, then let $R_3=\nu(r_3)=\langle U,Q_3\rangle \cap \mathcal{C}(Q_3)$. The subgroup in $\PGL(3,q)$ 
 stabilising $\{ u,q_1,q_2\}$ and $l_3$ contains the elation group with center $u$ and axis $\langle q_1,q_2\rangle$, and thus it acts transitively on points of $l_3\setminus\{u\}$. Hence, without loss of generality we may also fix $r_3$.  Then the point $Q_3$ can be written as $Q_3=\langle U,R_3\rangle \cap \langle N(\cC(Q_3)),S_3\rangle$, where $S_3$ is the unique point of $\cC(Q_3)$ which lies on the tangent of $\cC(Q_3)$ through $Q_3$.
 Since the group of homologies with centre $r_3$ and axis $\langle q_1,q_2\rangle$  acts transitively on points of $l_3\setminus\{u,r_3\}$, we may also fix $S_3$.
  This shows that any choice of $Q_3$ as a point on the secant $\langle U, R_3\rangle$ defines the same orbit. For $Q_3=(0,0,1,0,1,1)$ we obtain the orbit $\Sigma_5$ with representative  
$$\Sigma_5:\begin{bmatrix} x&.&z\\.&y&z\\z&z&z       \end{bmatrix}.$$

\begin{Lemma}
The point-orbit distribution of a plane in $\Sigma_5$ is $[2,0,2q-2,q^2-q+1]$. In particular, $\Sigma_5 \not \in \{\Sigma_1,\Sigma_2,\Sigma_3,\Sigma_4\}$.
\end{Lemma}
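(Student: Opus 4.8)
The plan is to mimic the proof strategy already used for $\Sigma_3$ and $\Sigma_4$: work directly with the explicit representative
$$\pi_5:\begin{bmatrix} x&.&z\\.&y&z\\z&z&z\end{bmatrix},$$
compute its associated cubic curve $\mathscr{C}_5=\mathscr{C}(\pi_5)$ by expanding the determinant, enumerate the $\bF_q$-rational points of $\mathscr{C}_5$ (these are exactly the rank-$\le 2$ points of $\pi_5$), and then split them by rank. First I would expand $\det M_P$ with $M_P$ the above matrix, obtaining a cubic form $f_5(X,Y,Z)$; a quick expansion along the first row gives $\det = x(yz - z^2) - z(0 - z^2)$ up to the characteristic-2 sign conventions, i.e. $f_5 = XYZ + XZ^2 + Z^3 = Z(XY+XZ+Z^2)$ (to be double-checked against the matrix). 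So $\mathscr{C}_5$ is the union of the line $\cZ(Z)$ and the conic $\cZ(XY+XZ+Z^2)$. Counting rational points: the line contributes $q+1$ points, the conic is non-singular (check via Lemma \ref{lem:non-singular}) hence has $q+1$ points, and they share the points on $\cZ(Z)\cap\cZ(XY)=\{(1,0,0),(0,1,0)\}$; so $|\mathscr{C}_5(\bF_q)| = 2(q+1)-2 = 2q$, giving $r_1+r_{2n}+r_{2s}=2q$ and $r_3 = (q^2+q+1)-2q = q^2-q+1$.

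Next I would pin down which of these $2q$ points have rank $1$ versus rank $2$, and among the rank-$2$ ones which lie in the nucleus plane $\mathcal{N}=\cZ(Y_0,Y_3,Y_5)$. A point $(x,y,z)$ of $\pi_5$ has rank $1$ iff its matrix $M_P$ has rank $1$; since the representative intersects $\cV(\Fq)$ in exactly two points by construction (case $(ii\text{-}c)$), $r_1=2$, and one checks these are the points with $(x,y,z)=(1,0,0)$ and $(0,1,0)$, both on $\mathscr{C}_5$ as expected. For the nucleus plane: a point of $\pi_5$ lies in $\mathcal{N}$ iff the diagonal entries $x$, $y$, $z$ of $M_P$ all vanish, which forces $(x,y,z)=(0,0,0)$, impossible; hence $r_{2n}=0$. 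Therefore the remaining $2q-2$ rational points of $\mathscr{C}_5$ are rank-$2$ points outside $\mathcal{N}$, so $r_{2s}=2q-2$, yielding the point-orbit distribution $[2,0,2q-2,q^2-q+1]$.

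Finally, to conclude $\Sigma_5\notin\{\Sigma_1,\Sigma_2,\Sigma_3,\Sigma_4\}$: the point-orbit distribution $[2,0,2q-2,q^2-q+1]$ already differs from those of $\Sigma_1$ ($[q+1,1,q^2-1,0]$), $\Sigma_2$ ($[3,0,3q-3,q^2-2q+1]$), $\Sigma_3$ and $\Sigma_4$ (both $[2,1,2q-2,q^2-q]$, which differ in the $r_{2n}$ and $r_{2s}$ entries), since the point-orbit distribution is a $K$-invariant (it is preserved by $K$ because the four point classes are $K$-orbits). This settles the claim with essentially no further work.

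The main obstacle I anticipate is purely computational rather than conceptual: getting the determinant expansion and hence $\mathscr{C}_5$ exactly right in characteristic $2$ (sign conventions are irrelevant, but one must not drop or misplace a monomial), and then correctly identifying the intersection multiplicities of the line and the conic so that the rational-point count $2q$ is exact rather than off by the number of shared points. Verifying non-singularity of the conic component via Lemma \ref{lem:non-singular} and checking that no rational point of $\pi_5$ meets $\mathcal{N}$ are both one-line checks. There is no delicate case analysis here because the representative is fully explicit; the work done earlier in case $(ii\text{-}c)$ already guarantees $r_1=2$, so the cubic-curve computation only needs to distribute the remaining $\mathscr{C}_5(\bF_q)$ points between $r_{2n}$ and $r_{2s}$.
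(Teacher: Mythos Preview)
Your approach is exactly the paper's: compute the cubic $\mathscr{C}_5$, count its $\bF_q$-points, check the intersection with $\mathcal{N}$, and compare point-orbit distributions. There is, however, a slip in your determinant expansion: the $(1,3)$-minor of the representative matrix is $\det\begin{pmatrix} 0 & y \\ z & z\end{pmatrix}=-yz$, not $0-z^2$, so the correct cubic is
\[
\mathscr{C}_5=\mathcal{Z}(XYZ+XZ^2+YZ^2)=\mathcal{Z}\bigl(Z(XY+XZ+YZ)\bigr),
\]
as stated in the paper, rather than $\mathcal{Z}\bigl(Z(XY+XZ+Z^2)\bigr)$. This is harmless for the present argument: the conic component $\mathcal{Z}(XY+XZ+YZ)$ is still non-singular by Lemma~\ref{lem:non-singular} and still meets the line $\mathcal{Z}(Z)$ in precisely the two points $(1,0,0)$ and $(0,1,0)$, so your count of $2q$ rank-$\le 2$ points, the verification that $\pi_5\cap\mathcal{N}=\emptyset$, and the resulting distribution $[2,0,2q-2,q^2-q+1]$ are all unaffected.
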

\begin{proof}

Let $\pi_5$ be the above representative of $\Sigma_5$. Points of rank at most $2$ in $\pi$ correspond to points of the cubic curve $\mathscr{C}_5=\mathcal{Z}(XYZ+XZ^2+YZ^2)$, which intersect the nucleus plane $\mathcal{N}$ trivially and the Veronese surface $\cV(\Fq)$ in exactly two points. Namely, points with parametrized coordinates $(1,0,0)$ and $(0,1,0)$.
Therefore, the point-orbit distribution of a plane in $\Sigma_5$ is $[2,0,2q-2,q^2-q+1]$ and $\Sigma_5 \not \in \{\Sigma_1,\Sigma_2,\Sigma_3,\Sigma_4\}$.
\end{proof}

\subsection{Planes containing one rank-1 point and spanned by points of rank at most 2}\label{qwe}

In this section we investigate planes of $\PG(5,q)$ which are spanned by points of rank at most $2$ and which meet the Veronese surface in exactly one point. 
Let $\pi=\langle Q_1,Q_2,Q_3\rangle$ be such a plane where $rank(Q_1)=1$ and $rank(Q_2)=rank(Q_3)=2$, and consider the two conics $\mathcal{C}(Q_2)$ and $\mathcal{C}(Q_3)$ associated with $Q_2$ and $Q_3$ respectively. Denote by $q_1$, $l_2$ and $l_3$ the respective preimages of $Q_1$, $\mathcal{C}(Q_2)$ and $\mathcal{C}(Q_3)$ under the Veronese embedding. By symmetry, we are left with the following possibilities: 
\begin{itemize}
\item [(a)] $l_2=l_3$,
\item [(b)] $q_1 = l_2\cap l_3$, 
\item [(c)] $q_1 \in l_2\setminus  l_3$, and,
\item [(d)] $q_1 \not\in l_2\cup l_3$. \end{itemize}

\begin{figure}[h]
\centering

\tikzset{every picture/.style={line width=0.75pt}}

\begin{tikzpicture}[x=0.75pt,y=0.75pt,yscale=-1,xscale=1]

\draw    (111.45,141.76) -- (87.02,70.64) ;
\draw  [color={rgb, 255:red, 0; green, 0; blue, 0 }  ,draw opacity=1 ][fill={rgb, 255:red, 0; green, 0; blue, 0 }  ,fill opacity=1 ] (109.15,140.07) .. controls (109.15,139.14) and (110.18,138.39) .. (111.45,138.39) .. controls (112.71,138.39) and (113.74,139.14) .. (113.74,140.07) .. controls (113.74,141) and (112.71,141.76) .. (111.45,141.76) .. controls (110.18,141.76) and (109.15,141) .. (109.15,140.07) -- cycle ;
\draw    (209.15,142.8) -- (184.72,71.69) ;
\draw  [color={rgb, 255:red, 0; green, 0; blue, 0 }  ,draw opacity=1 ][fill={rgb, 255:red, 0; green, 0; blue, 0 }  ,fill opacity=1 ] (223.43,90.92) .. controls (223.43,89.99) and (224.46,89.24) .. (225.72,89.24) .. controls (226.99,89.24) and (228.02,89.99) .. (228.02,90.92) .. controls (228.02,91.85) and (226.99,92.6) .. (225.72,92.6) .. controls (224.46,92.6) and (223.43,91.85) .. (223.43,90.92) -- cycle ;
\draw    (312.96,141.76) -- (288.53,70.64) ;
\draw    (312.96,141.76) -- (339.13,70.64) ;
\draw  [color={rgb, 255:red, 0; green, 0; blue, 0 }  ,draw opacity=1 ][fill={rgb, 255:red, 0; green, 0; blue, 0 }  ,fill opacity=1 ] (310.66,140.07) .. controls (310.66,139.14) and (311.69,138.39) .. (312.96,138.39) .. controls (314.22,138.39) and (315.25,139.14) .. (315.25,140.07) .. controls (315.25,141) and (314.22,141.76) .. (312.96,141.76) .. controls (311.69,141.76) and (310.66,141) .. (310.66,140.07) -- cycle ;
\draw    (436.83,143.85) -- (412.4,72.73) ;
\draw    (436.83,143.85) -- (463,72.73) ;
\draw  [color={rgb, 255:red, 0; green, 0; blue, 0 }  ,draw opacity=1 ][fill={rgb, 255:red, 0; green, 0; blue, 0 }  ,fill opacity=1 ] (434.53,142.16) .. controls (434.53,141.24) and (435.56,140.48) .. (436.83,140.48) .. controls (438.09,140.48) and (439.12,141.24) .. (439.12,142.16) .. controls (439.12,143.09) and (438.09,143.85) .. (436.83,143.85) .. controls (435.56,143.85) and (434.53,143.09) .. (434.53,142.16) -- cycle ;
\draw   (69.57,158.49) .. controls (69.6,163.16) and (71.95,165.48) .. (76.62,165.45) -- (146.41,165.02) .. controls (153.08,164.98) and (156.43,167.29) .. (156.46,171.96) .. controls (156.43,167.29) and (159.74,164.94) .. (166.41,164.9)(163.41,164.92) -- (236.21,164.47) .. controls (240.88,164.44) and (243.2,162.1) .. (243.17,157.43) ;
\draw  [color={rgb, 255:red, 0; green, 0; blue, 0 }  ,draw opacity=1 ][fill={rgb, 255:red, 0; green, 0; blue, 0 }  ,fill opacity=1 ] (419.7,101.38) .. controls (419.7,100.45) and (420.73,99.7) .. (422,99.7) .. controls (423.27,99.7) and (424.29,100.45) .. (424.29,101.38) .. controls (424.29,102.31) and (423.27,103.06) .. (422,103.06) .. controls (420.73,103.06) and (419.7,102.31) .. (419.7,101.38) -- cycle ;
\draw    (556.34,142.8) -- (531.91,71.69) ;
\draw    (556.34,142.8) -- (582.51,71.69) ;
\draw  [color={rgb, 255:red, 0; green, 0; blue, 0 }  ,draw opacity=1 ][fill={rgb, 255:red, 0; green, 0; blue, 0 }  ,fill opacity=1 ] (554.04,141.12) .. controls (554.04,140.19) and (555.07,139.44) .. (556.34,139.44) .. controls (557.6,139.44) and (558.63,140.19) .. (558.63,141.12) .. controls (558.63,142.05) and (557.6,142.8) .. (556.34,142.8) .. controls (555.07,142.8) and (554.04,142.05) .. (554.04,141.12) -- cycle ;
\draw  [color={rgb, 255:red, 0; green, 0; blue, 0 }  ,draw opacity=1 ][fill={rgb, 255:red, 0; green, 0; blue, 0 }  ,fill opacity=1 ] (588.94,123.34) .. controls (588.94,122.41) and (589.96,121.66) .. (591.23,121.66) .. controls (592.5,121.66) and (593.53,122.41) .. (593.53,123.34) .. controls (593.53,124.27) and (592.5,125.02) .. (591.23,125.02) .. controls (589.96,125.02) and (588.94,124.27) .. (588.94,123.34) -- cycle ;

\draw (51.7,59.81) node [anchor=north west][inner sep=0.75pt]  [font=\scriptsize]  {$l_{2} =l_{3}$};
\draw (95.24,136.56) node [anchor=north west][inner sep=0.75pt]  [font=\scriptsize]  {$q_{1}$};
\draw (149.4,58.77) node [anchor=north west][inner sep=0.75pt]  [font=\scriptsize]  {$l_{2} =l_{3}$};
\draw (222.6,99.96) node [anchor=north west][inner sep=0.75pt]  [font=\scriptsize]  {$q_{1}$};
\draw (140.08,96.51) node [anchor=north west][inner sep=0.75pt]    {$or$};
\draw (145.94,173.52) node [anchor=north west][inner sep=0.75pt]   [align=left] {(a)};
\draw (425.96,173.52) node [anchor=north west][inner sep=0.75pt]   [align=left] {(c)};
\draw (274.68,58.77) node [anchor=north west][inner sep=0.75pt]  [font=\scriptsize]  {$l_{2}$};
\draw (342.72,59.81) node [anchor=north west][inner sep=0.75pt]  [font=\scriptsize]  {$l_{3}$};
\draw (296.75,136.56) node [anchor=north west][inner sep=0.75pt]  [font=\scriptsize]  {$q_{1}$};
\draw (301.21,172.47) node [anchor=north west][inner sep=0.75pt]   [align=left] {(b)};
\draw (398.55,60.86) node [anchor=north west][inner sep=0.75pt]  [font=\scriptsize]  {$l_{2}$};
\draw (466.59,61.9) node [anchor=north west][inner sep=0.75pt]  [font=\scriptsize]  {$l_{3}$};
\draw (407.53,94.73) node [anchor=north west][inner sep=0.75pt]  [font=\scriptsize]  {$q_{1}$};
\draw (420.68,135.47) node [anchor=north west][inner sep=0.75pt]  [font=\scriptsize]  {$u$};
\draw (545.47,172.47) node [anchor=north west][inner sep=0.75pt]   [align=left] {(d)};
\draw (518.06,59.81) node [anchor=north west][inner sep=0.75pt]  [font=\scriptsize]  {$l_{2}$};
\draw (586.1,60.86) node [anchor=north west][inner sep=0.75pt]  [font=\scriptsize]  {$l_{3}$};
\draw (540.19,134.43) node [anchor=north west][inner sep=0.75pt]  [font=\scriptsize]  {$u$};
\draw (595.96,117.74) node [anchor=north west][inner sep=0.75pt]  [font=\scriptsize]  {$q_{1}$};

\end{tikzpicture}

\caption{The configurations defined by cases (a), (b), (c) and (d) in Section \ref{qwe}.}
\end{figure}
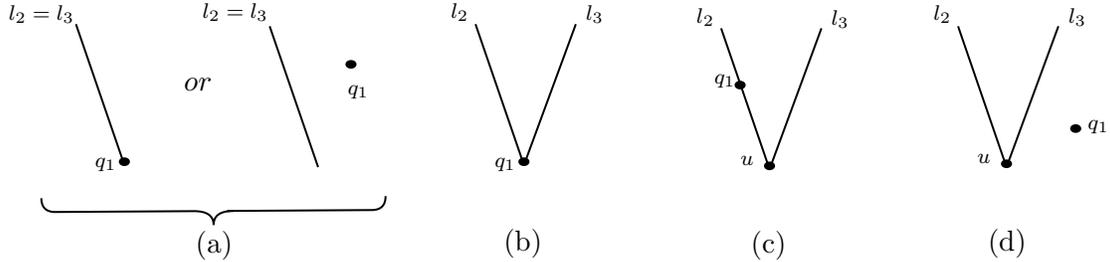

\subsubsection{(a) $l_2=l_3$} 

If $l_2=l_3$, then assume first that $q_1 \in l_2$.  In this case, $\pi$ becomes a conic plane and thus lies in $\Sigma_1$. Assume next that $q_1 \not\in   l_2$. As $\PGL(3,q)$ acts transitively on antiflags in $\PG(2,q)$ and  $\pi$ has a unique rank-1 point, it follows that we may fix $Q_1$ and $ \mathcal{C}(Q_2) $ as $\nu(\langle e_1\rangle)$ and $\nu(\langle e_2,e_3\rangle)$ respectively, where the line $\langle Q_2,Q_3\rangle$ must be external to $ \mathcal{C}(Q_2)$. Now, as the group stabilising $Q_1$ and $ \mathcal{C}(Q_2)$ acts transitively on external lines to $ \mathcal{C}(Q_2)$, we obtain a unique orbit of such planes which we label as $\Sigma_6$. Indeed, we may fix $Q_2Q_3$ as the line $\mathcal{Z}( Y_3+c Y_4+Y_5)$ where $Tr(c^{-1})=1$ 
to get the following representative $$\Sigma_6:\begin{bmatrix} x&.&.\\.&y+c z&z\\.&z&y        \end{bmatrix}.$$        \\\par

 \begin{Lemma}\label{sigma6}
The point-orbit distribution of a plane in $\Sigma_6$ is $[1,0,q+1,q^2-1]$. In particular, $\Sigma_6 \not\in \{\Sigma_1,\Sigma_2,\Sigma_3,\Sigma_4,\Sigma_5\}$.
\end{Lemma}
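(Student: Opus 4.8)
The plan is to follow the now-familiar template of the preceding lemmas: pick the explicit representative $\pi_6$ of $\Sigma_6$, write down its associated cubic curve $\mathscr{C}_6$, and count the $\bF_q$-rational points together with the relevant rank and nucleus data. First I would compute the determinant of the matrix representation
\[
M=\begin{bmatrix} x&0&0\\0&y+cz&z\\0&z&y\end{bmatrix},
\]
which gives $\det M = x\big(y(y+cz)+z^2\big) = x\big(Y^2+cYZ+Z^2\big)$ (after renaming the parameters as coordinates $X,Y,Z$ on $\PG(2,q)$), so $\mathscr{C}_6=\cZ\!\big(X(Y^2+cYZ+Z^2)\big)$. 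The condition $\operatorname{Tr}(c^{-1})=1$, equivalently $\operatorname{Tr}(c^{-2}\cdot 1)=1$, is exactly the condition from Lemma~\ref{quadratic} (with $\alpha=\gamma=1$, $\beta=c$) for $Y^2+cYZ+Z^2$ to be irreducible, i.e. to have no $\bF_q$-rational zero other than the trivial one; hence the conic factor is a pair of conjugate imaginary lines and contributes only the point $(1,0,0)$ to $\mathscr{C}_6(\bF_q)$, which has rank $1$. So $\mathscr{C}_6(\bF_q)$ consists of the line $\cZ(X)$ (its $q+1$ points) plus this single extra point.

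Next I would classify these $q+2$ points of rank at most $2$ by rank and by membership in $\mathcal{N}$. The point $(1,0,0)$ corresponds to $M_{Q_1}=\operatorname{diag}(1,0,0)$, rank $1$; this is the unique rank-$1$ point, since no point of $\cZ(X)$ can have rank $1$ (a rank-$1$ symmetric matrix with zero $(1,1)$-entry and the given shape would force the whole matrix to be zero). On the line $\cZ(X)$, the points are parametrised by $(0,y,z)$ with matrix $\left[\begin{smallmatrix}0&0&0\\0&y+cz&z\\0&z&y\end{smallmatrix}\right]$, all of rank $2$ (the lower-right $2\times2$ block has determinant $y^2+cyz+z^2\neq 0$ by irreducibility, for $(y,z)\neq(0,0)$); among these exactly one lies in $\mathcal{N}=\cZ(Y_0,Y_3,Y_5)$, namely the one with $y=0$ and $y+cz=0$, which is impossible unless $z=0$ — so in fact \emph{none} of them lies in $\mathcal{N}$, giving $r_{2n}=0$ and $r_{2s}=q+1$. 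The remaining $q^2+q+1-(q+2)=q^2-1$ points of $\pi_6$ have rank $3$. This yields the point-orbit distribution $[1,0,q+1,q^2-1]$, and since this $4$-tuple differs from those of $\Sigma_1,\dots,\Sigma_5$ recorded in the earlier lemmas, $\Sigma_6$ is distinct from all of them.

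The only genuinely delicate point — and the one I would be most careful about — is the nucleus count: one must check whether any rank-$2$ point of $\pi_6$ lies in $\mathcal{N}$, and the computation above shows the answer is no, which is what makes $r_{2n}=0$ rather than $1$ and distinguishes $\Sigma_6$ from, say, $\Sigma_3$ or $\Sigma_4$. I would also double-check the admissibility/trace bookkeeping: the hypothesis $\operatorname{Tr}(c^{-1})=1$ must genuinely guarantee irreducibility of $Y^2+cYZ+Z^2$ over $\bF_q$ for all even $q$ (including whether it forces $q>2$, consistent with the standing assumption $q\neq 2$ in Table~\ref{tableplanes}), and that a suitable $c$ exists — this is immediate since $\operatorname{Tr}$ is surjective onto $\bF_2$. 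Everything else is the routine point-counting already used verbatim in Lemmas~\ref{sigma3}, \ref{sigma4}.
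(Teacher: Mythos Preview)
Your proof is correct and follows essentially the same approach as the paper's: compute the cubic $\mathscr{C}_6=\cZ\!\big(X(Y^2+cYZ+Z^2)\big)$, use the trace condition to see that the quadratic factor is anisotropic so the only rank-$1$ point is $(1,0,0)$, count the $q+1$ rank-$2$ points on $\cZ(X)$, and verify that $\pi_6\cap\mathcal{N}=\emptyset$. Your write-up is in fact more explicit than the paper's on the rank-$1$ check along $\cZ(X)$ and on the nucleus computation, but the argument is the same.
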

\begin{proof}
Let $\pi_6$ be the above representative of $\Sigma_6$. Points of rank at most $2$ in $\pi_6$ correspond to points on the cubic curve $\mathscr{C}_6=\mathcal{Z}(XY^2+cXYZ+XZ^2)$. In particular, points of rank one in $\pi_6$ correspond to points on $\mathcal{Z}(XY,XZ,Y^2+c YZ+Z^2)$. As $Tr(c^{-1})=1$, we obtain a unique rank-$1$ point parametrized by $(1,0,0)$. The remaining points on $\mathscr{C}_6$ parametrize $q+1$  rank-$2$ points in $\pi_6$, where none of these is contained in the nucleus plane $\mathcal{N}$. Therefore, the point-orbit distribution of a plane in $\Sigma_6$ is $[1,0,q+1,q^2-1]$, and thus $\Sigma_6 \not\in \{\Sigma_1,\Sigma_2,\Sigma_3,\Sigma_4,\Sigma_5\}$. 

\end{proof}

\begin{Lemma}
A plane $\pi \in \Sigma_6$ has $q+1$ lines in $o_{8,1}$ and a unique line in $o_{10}$.
\end{Lemma}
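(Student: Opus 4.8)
The plan is to work directly with the representative $\pi_6$ of $\Sigma_6$ and enumerate the lines it contains, sorting them into $K$-orbits by computing their point-orbit distributions and comparing against Table~\ref{tableoflines}. First I would recall from Lemma~\ref{sigma6} that $\pi_6$ has point-orbit distribution $[1,0,q+1,q^2-1]$: one rank-$1$ point $Q_1=(1,0,0)$ (in parametric coordinates on $\mathscr{C}(\pi)$ this is the point with $Y=Z=0$), no rank-$2$ points in $\mathcal{N}$, exactly $q+1$ rank-$2$ points lying on the cubic $\mathscr{C}_6=\mathcal{Z}(XY^2+cXYZ+XZ^2)=\mathcal{Z}(X(Y^2+cYZ+Z^2))$, and $q^2-1$ rank-$3$ points. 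Since $\operatorname{Tr}(c^{-1})=1$, the conic factor $Y^2+cYZ+Z^2$ is irreducible over $\Fq$ (Lemma~\ref{quadratic}), so $\mathscr{C}_6$ is the union of the line $\mathcal{Z}(X)$ and a pair of conjugate imaginary lines meeting $\mathcal{Z}(X)$ in a conjugate pair of points not rational over $\Fq$. Hence the $q+1$ rank-$\le 2$ points of $\pi_6$ other than $Q_1$ are exactly the $q$ rational points of the line $\mathcal{Z}(X)$ together with $Q_1$ itself — wait, $Q_1=(1,0,0)$ is \emph{not} on $\mathcal{Z}(X)$; so in fact the rank-$\le 2$ locus consists of $Q_1$ plus the $q$ points of the line $\ell:=\mathcal{Z}(X)$, which accounts for $q+1$ points, matching $r_1+r_{2s}=1+(q+1)$... so one of the points of $\ell$ must be $Q_1$ — I need to recheck: actually $Q_1=(1,0,0)$ has $Y=Z=0$, which does \emph{not} satisfy $X=0$. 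So I would instead carefully recount: the rational points of $\mathscr{C}_6$ are $Q_1$ (from $Y^2+cYZ+Z^2=0$ having... no, $(1,0,0)$ gives $1\cdot 0 = 0$, it lies on $\mathscr{C}_6$) and the $q$ points of $\mathcal{Z}(X)$, total $q+1$, but $r_1+r_{2s}=q+2$; the resolution is that $\ell=\mathcal{Z}(X)$ passes through $Q_1$? No. The correct statement is that the line $\mathcal{Z}(X)$ meets $\mathscr{C}_6$ in all its points, and $(1,0,0)$ is the intersection of the tangent-type structure; regardless, the \emph{geometric} content I actually need is: the rank-$\le2$ points of $\pi_6$ lie on the line $\ell_{10}$ in $\pi_6$ corresponding to $\mathcal{Z}(X)$, which is a line of type $o_{10}$ (point-OD $[0,0,q+1,0]$), and every other line of $\pi_6$ through $Q_1$ is a line of type $o_{8,1}$.

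So the key steps are as follows. Step 1: Identify the line $\ell_{10}$ of $\pi_6$ cut out by $X=0$, i.e. the line $\langle Q_2,Q_3\rangle$ with matrix representation $\left[\begin{smallmatrix} 0&0&0\\ 0&y+cz&z\\ 0&z&y\end{smallmatrix}\right]$; show it contains no rank-$1$ point (since $\operatorname{Tr}(c^{-1})=1$ forces $y^2+cyz+z^2\neq 0$ for $(y,z)\neq(0,0)$, so the associated $2\times2$ block is always nonsingular, hence rank exactly $2$ for all $q+1$ points) and no rank-$2$ point in $\mathcal{N}$ (the diagonal entries $y,\,y+cz$ can't both vanish unless $y=z=0$), giving point-OD $[0,0,q+1,0]$, which is precisely $o_{10}$. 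Step 2: Count the lines through $Q_1$ in $\pi_6$: there are $q+1$ of them; one is... actually $Q_1\notin\ell_{10}$, so all $q+1$ lines through $Q_1$ meet $\ell_{10}$ in a distinct point, and since $\ell_{10}$ has only rank-$2$ and rank-$3$ points, each such line contains $Q_1$ (rank $1$) plus its intersection point with $\ell_{10}$ (rank $2$, not in $\mathcal{N}$) plus $q-1$ further points. Step 3: Show those $q-1$ further points on each line through $Q_1$ all have rank $3$: this follows because $\pi_6$ has exactly $q+1$ rank-$\le2$ points total ($Q_1$ and the $q+1$ of $\ell_{10}$ — recount: that is $q+2$, so in fact $Q_1$ must lie on $\ell_{10}$, contradiction; the honest count from Lemma~\ref{sigma6} is $r_1+r_{2n}+r_{2s}=1+0+(q+1)=q+2$ rank-$\le2$ points, of which $q+1$ lie on $\ell_{10}$ and one is $Q_1\notin\ell_{10}$ — consistent). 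Hence each line through $Q_1$ other than nothing has exactly one rank-$2$ point (its meet with $\ell_{10}$) and $q-1$ rank-$3$ points, giving point-OD $[1,0,1,q-1]$, which is exactly $o_{8,1}$. Step 4: Confirm no line through $Q_1$ is of type $o_{8,2}$ (which would need a rank-$2$ point in $\mathcal{N}$) or $o_9$ (which would need no rank-$2$ point at all); both are excluded since $\ell_{10}$ carries no $\mathcal{N}$-points and $Q_1\notin\ell_{10}$ means every line through $Q_1$ hits $\ell_{10}$. This yields exactly $q+1$ lines of type $o_{8,1}$ and exactly one line of type $o_{10}$ in $\pi_6$, as claimed.

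The main obstacle — and the step I would be most careful about — is Step 3, pinning down that \emph{every} line through $Q_1$ other than those lying in... well, there is no subtlety about tangent lines here since $Q_1$ is not on $\ell_{10}$; the real care is in the bookkeeping between the cubic $\mathscr{C}_6$ and the rank stratification, making sure the $q+1$ points promised by the point-orbit distribution are allocated correctly and that the irreducibility of $Y^2+cYZ+Z^2$ (equivalently $\operatorname{Tr}(c^{-1})=1$ via Lemma~\ref{quadratic}) is used consistently to rule out extra rank-$1$ or $\mathcal{N}$-points on $\ell_{10}$. One clean way to avoid any ambiguity is to parametrise: write a general point of $\pi_6$ as $(x,y,z)$, compute its rank from the $3\times3$ matrix (the determinant is $x(xy+xcz+xy - z^2)=x(x(y+cz)+xy+z^2)$... $=x\det\left[\begin{smallmatrix}y+cz&z\\z&y\end{smallmatrix}\right]=x(y^2+cyz+z^2)$), so rank $\le2$ iff $x=0$ or $y^2+cyz+z^2=0$; the latter has only the solution $(y,z)=(0,0)$, i.e. the point $Q_1$, so the rank-$\le2$ locus is exactly $\ell_{10}=\{x=0\}$ together with the single point $Q_1$. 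Then everything follows immediately: $q+1$ rank-$2$ points on $\ell_{10}$ (none in $\mathcal{N}$, none rank $1$), $Q_1\notin\ell_{10}$, the $q+1$ lines through $Q_1$ each meet $\ell_{10}$ in one point and are of type $o_{8,1}$, and $\ell_{10}$ is of type $o_{10}$; no other line of $\pi_6$ avoids $Q_1$ unless it equals $\ell_{10}$, so the count is exact. I would present the determinant computation explicitly as the backbone and let the orbit identifications fall out by matching point-ODs against Table~\ref{tableoflines}.
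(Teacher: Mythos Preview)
Your proposal is correct and follows essentially the same approach as the paper: use the point-orbit distribution $[1,0,q+1,0]$ from Lemma~\ref{sigma6} to see that the $q+1$ rank-$2$ points all lie on the line $\ell_{10}=\langle Q_2,Q_3\rangle=\mathcal{Z}(X)$ (giving the unique $o_{10}$ line), and then observe that each of the $q+1$ lines through the unique rank-$1$ point $Q_1\notin\ell_{10}$ meets $\ell_{10}$ in a single rank-$2$ point not in $\mathcal{N}$, yielding point-OD $[1,0,1,q-1]$ and hence type $o_{8,1}$. Your explicit determinant computation $\det=x(y^2+cyz+z^2)$ is a helpful backbone the paper leaves implicit; just clean up the presentation (drop the internal recounting), and note that the exactness of the counts is immediate since any $o_{8,1}$ line must contain the unique rank-$1$ point $Q_1$ and any $o_{10}$ line must contain all $q+1$ rank-$2$ points --- your final sentence ``no other line of $\pi_6$ avoids $Q_1$ unless it equals $\ell_{10}$'' is false as written and should be replaced by this observation.
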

\begin{proof}
By Lemma \ref{sigma6}, $\pi$ intersects the nucleus plane trivially and has $q+1$ rank-$2$ points lying on the line $\langle Q_2,Q_3\rangle$. Therefore, $\pi$ has a unique line in $o_{10}$ and each of the $q+1$ lines through the rank-$1$ point $Q_1$ must have $q$ rank-3 points, and thus belongs to the line-orbit $o_{8,1}$.
\end{proof}

\subsubsection{(b) $q_1= l_2\cap l_3$}
We may fix $q_1$, $l_2$ and $l_3$ as $e_1$, $\langle e_1,e_2\rangle$ and $\langle e_1,e_3\rangle$ respectively. Furthermore, as $\pi$ contains a unique rank-$1$ point, it follows that $Q_2\in T_{Q_1}(\mathcal{C}(Q_2))$ and $Q_3\in T_{Q_1}(\mathcal{C}(Q_3))$. 
 Therefore, $\pi$ is completely determined by $Q_1$, $\mathcal{C}(Q_2)$ and $\mathcal{C}(Q_3)$. This yields to a unique $K$-orbit $\Sigma_7$  represented by
 $$\Sigma_7:\begin{bmatrix} x&y&z\\y&.&.\\z&.&. \end{bmatrix}.$$\par

\begin{Lemma}
The point-orbit distribution of a plane in $\Sigma_7$ is $[1,q+1,q^2-1,0]$. In particular, $\Sigma_7 \not\in  \{\Sigma_1,\Sigma_2,\Sigma_3,\Sigma_4,\Sigma_5,\Sigma_6\}$.
\end{Lemma}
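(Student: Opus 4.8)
The plan is to compute the point-orbit distribution of the representative $\pi_7$ directly, exactly in the style of the preceding lemmas, and then invoke the distributions already established for $\Sigma_1,\dots,\Sigma_6$ to separate $\Sigma_7$ from all of them. The associated cubic is obtained by setting the determinant of
\[
M_{(x,y,z)}=\begin{bmatrix} x&y&z\\ y&0&0\\ z&0&0\end{bmatrix}
\]
equal to zero. Since this matrix has a $2\times 2$ zero block, the determinant is a pure product; one finds $\det = x\cdot 0 - y(0) + z(\cdot)$ collapses and the cubic $\mathscr{C}_7$ is $\mathcal{Z}(XZ^2)$ up to relabelling (more precisely, expanding the determinant gives a form divisible by one linear factor squared and one more linear factor, i.e. a line counted twice together with another line). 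So $\mathscr{C}_7$ consists of a double line and a simple line through a common point, hence carries $2q+1$ $\bF_q$-rational points, which are exactly the rank-$\le 2$ points of $\pi_7$; the remaining $q^2-q$ points have rank $3$, giving $r_3=0$? — no: $q^2+q+1-(2q+1)=q^2-q$, so $r_3=q^2-q$. I must double-check this against the claimed distribution $[1,q+1,q^2-1,0]$, which has $r_3=0$; this signals that in fact \emph{every} point of $\pi_7$ has rank at most $2$, so the cubic must be the whole plane — meaning the determinant vanishes identically. Indeed $\det M_{(x,y,z)}=0$ identically here, since the lower-right $2\times2$ block is zero and the only nonzero entries sit in the first row and first column, forcing the determinant to be $0$. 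So $\mathscr{C}_7$ is not a curve but all of $\PG(2,q)$, and $\pi_7$ has no rank-$3$ point at all: $r_3=0$.

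With that correction the count is straightforward. The rank-$1$ points of $\pi_7$ are the points where $M_{(x,y,z)}$ has rank $1$; since the $2\times 2$ lower block is zero, rank $1$ forces $y=z=0$, giving the single point $\nu(\langle e_1\rangle)=(1,0,0,0,0,0)$, so $r_1=1$. The points in the nucleus plane $\mathcal{N}=\mathcal{Z}(Y_0,Y_3,Y_5)$ are those with $x=0$ (the diagonal entries $x,0,0$ must all vanish), i.e. the line $\{(0,y,z,0,0,0):(y,z)\neq(0,0)\}$, which contributes $q+1$ points; all of these have rank exactly $2$ (the matrix $\begin{bmatrix}0&y&z\\ y&0&0\\ z&0&0\end{bmatrix}$ has rank $2$ whenever $(y,z)\neq 0$), so $r_{2n}=q+1$. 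The remaining $q^2+q+1-1-(q+1)=q^2-1$ points have $x\neq 0$ and hence rank $2$ but are not in $\mathcal{N}$, so $r_{2s}=q^2-1$. This yields the point-orbit distribution $[1,q+1,q^2-1,0]$.

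Finally, to conclude $\Sigma_7\notin\{\Sigma_1,\dots,\Sigma_6\}$ I simply compare $4$-tuples, since the point-orbit distribution is a $K$-invariant. The distributions of $\Sigma_1,\dots,\Sigma_6$ are $[q+1,1,q^2-1,0]$, $[3,0,3q-3,q^2-2q+1]$, $[2,1,2q-2,q^2-q]$, $[2,1,2q-2,q^2-q]$, $[2,0,2q-2,q^2-q+1]$, $[1,0,q+1,q^2-1]$ respectively, and none of these equals $[1,q+1,q^2-1,0]$ (they differ already in $r_1$ or, for $\Sigma_6$, in $r_{2n}$ and $r_{2s}$). Hence $\Sigma_7$ is a genuinely new orbit. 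The only real subtlety — and the one point worth being careful about — is recognising that $\det M_P\equiv 0$ on this plane, so that $\pi_7$ is entirely contained in the rank-$\le 2$ locus and $r_3=0$; everything else is a direct rank computation on $3\times 3$ symmetric matrices with a zero $2\times 2$ block.
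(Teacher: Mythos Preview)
Your argument is correct and follows essentially the same route as the paper's own proof: observe that $\det M_{(x,y,z)}\equiv 0$ so every point has rank at most two, identify the unique rank-$1$ point at $y=z=0$, identify the $q+1$ nucleus-plane points on the line $x=0$, and count the remaining $q^2-1$ points as rank-$2$ points outside $\mathcal{N}$; then compare distributions. The only issue is presentational: the written proposal retains an exploratory false start (the erroneous attempt to extract a cubic $\mathscr{C}_7=\mathcal{Z}(XZ^2)$ before realising the determinant vanishes identically), which should be excised so that the proof begins directly with the observation that the lower-right $2\times 2$ block of $M_{(x,y,z)}$ is zero and hence $\det M_{(x,y,z)}=0$ for all $(x,y,z)$.
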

\begin{proof}
It follows from the above representative that points of $\Sigma_7$ have rank at most two. Particularly, $\Sigma_7$ has a unique rank-$1$ point obtained for $y=z=0$ and $q+1$ points in the nucleus plane parametrized by $\{(0,y,z): (y,z)\in \PG(1,q)\}$. Therefore, the point-orbit distribution of a plane in $\Sigma_7$ is  $[1,q+1,q^2-1,0]$. Moreover, by comparing this property with the previous orbits, we conclude that $\Sigma_7 \not\in  \{\Sigma_1,\Sigma_2,\Sigma_3,\Sigma_4,\Sigma_5,\Sigma_6\}$.
\end{proof}

\subsubsection{(c) $q_1 \in l_2\setminus l_3$}\label{case c}
 If $q_1 \in l_2\setminus l_3$, then without loss of generality we may assume $U=\nu(u)=\mathcal{C}(Q_2)\cap \mathcal{C}(Q_3)$ and $Q_1=\nu(q_1)$ with $u=\langle e_2\rangle$ and $q_1=\langle e_1\rangle$. The elation group $E(u, l_2)$, with centre $u$ and axis $l_2=\langle u ,q_1\rangle$, acts transitively on the affine points of $\PG(2,q)\setminus l_2$, and thus we may also fix $l_3$ as $\langle e_2,e_3\rangle$. Since $\pi$ has a unique rank-$1$ point, it follows that $Q_2$ lies on the tangent line $T_{Q_1}(\mathcal{C}(Q_2))$, and hence $\pi=\langle T_{Q_1}(\mathcal{C}(Q_2)),Q_3\rangle$.

  We need to consider the following possibilities: 
  \begin{itemize}
    \item[$(c$-$i)$] $Q_3=N(\mathcal{C}(Q_3))$,
    \item[$(c$-$ii)$] $Q_3\in T_{U}(\mathcal{C}(Q_3))\setminus \{N(\mathcal{C}(Q_3)),U\}$, and
    \item[$(c$-$iii)$] $Q_3 \in \langle\mathcal{C}(Q_3)\rangle \setminus (\mathcal{C}(Q_3)\cup T_{U}(\mathcal{C}(Q_3)))$.
  \end{itemize}

\boxed{$(c$-$i)$} If $Q_3$ is the nucleus point $N(\mathcal{C}(Q_3))$, then the next lemma shows that $\pi$ defines a new orbit
$$\Sigma_8:\begin{bmatrix} x&y&.\\y&.&z\\.&z&.          \end{bmatrix}.$$

\begin{Lemma}\label{1}
The point-orbit distribution of a plane in $\Sigma_8$ is $[1,q+1,q-1,q^2-q]$. In particular, $\Sigma_8\not\in  \{\Sigma_1,\Sigma_2,\Sigma_3,\Sigma_4,\Sigma_5,\Sigma_6,\Sigma_7\}$.
\end{Lemma}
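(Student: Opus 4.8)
The plan is to mimic the computation used in the previous lemmas (e.g.\ Lemmas~\ref{sigma3},~\ref{sigma4}): take the explicit representative $\pi_8$ of $\Sigma_8$ given by the matrix
\[
M_{(x,y,z)}=\begin{bmatrix} x&y&0\\ y&0&z\\ 0&z&0\end{bmatrix},
\]
compute its determinant to obtain the associated cubic curve $\mathscr{C}_8=\mathcal{Z}(\det M_{(x,y,z)})$, and then sort the $\mathbb{F}_q$-rational points of $\mathscr{C}_8$ into the four point-orbit classes. First I would expand the determinant: along the last row it equals $z\cdot(\text{cofactor})$, giving $\det = XZ^2$ up to sign, so $\mathscr{C}_8=\mathcal{Z}(XZ^2)=\mathcal{Z}(X)\cup\mathcal{Z}(Z)$ (the line $X=0$ counted once and the line $Z=0$ counted twice). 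Hence $\mathscr{C}_8$ has exactly $2q+1$ rational points, which is the total number $r_1+r_{2n}+r_{2s}$ of points of rank at most $2$ in $\pi_8$; the remaining $q^2+q+1-(2q+1)=q^2-q$ points have rank $3$.

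Next I would identify the rank-$1$ points: a point $(x,y,z)$ gives a rank-$1$ matrix iff all $2\times 2$ minors vanish, which forces $z=0$, $y^2=0$, hence $y=z=0$, giving the single point $(1,0,0)$; so $r_1=1$. Then I would compute $\pi_8\cap\mathcal{N}$, where $\mathcal{N}=\mathcal{Z}(Y_0,Y_3,Y_5)$ consists of symmetric matrices with zero diagonal: here $M_{(x,y,z)}$ has zero diagonal automatically (entries $(1,1)$, $(2,2)$, $(3,3)$ are $x,0,0$), so the only diagonal constraint is $x=0$. Thus $\pi_8\cap\mathcal{N}=\mathcal{Z}(X)$, a whole line, contributing $q+1$ points; but one must check these all have rank exactly $2$, not $1$ --- and indeed the point $(1,0,0)$ is not on $\mathcal{Z}(X)$, so all $q+1$ points of $\mathcal{Z}(X)$ have rank $2$ and lie in $\mathcal{N}$, giving $r_{2n}=q+1$. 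The remaining rank-$2$ points lie on $\mathcal{Z}(Z)\setminus(\mathcal{Z}(X)\cup\{(1,0,0)\})$; the line $Z=0$ has $q+1$ points, one of which is $(1,0,0)$ (rank $1$) and one of which is $(0,1,0)$ (on $X=0$, already counted), leaving $q-1$ points, so $r_{2s}=q-1$. This yields the point-orbit distribution $[1,q+1,q-1,q^2-q]$.

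Finally, for the ``in particular'' clause I would compare this $4$-tuple with those of $\Sigma_1,\dots,\Sigma_7$ recorded in the preceding lemmas (equivalently Table~\ref{tableplanes}): none of $[q+1,1,q^2-1,0]$, $[3,0,3q-3,q^2-2q+1]$, $[2,1,2q-2,q^2-q]$ (twice), $[2,0,2q-2,q^2-q+1]$, $[1,0,q+1,q^2-1]$, $[1,q+1,q^2-1,0]$ equals $[1,q+1,q-1,q^2-q]$ --- for instance $\Sigma_7$ has $r_{2s}=q^2-1\neq q-1$ and $r_3=0\neq q^2-q$ --- so $\Sigma_8$ is a genuinely new orbit.

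The only real subtlety, as in the analogous earlier lemmas, is making sure the bookkeeping on the double line $\mathcal{Z}(Z)$ is correct and that each point is assigned to exactly one orbit class: one must verify that the rank-$1$ point $(1,0,0)$ and the point $(0,1,0)$ where $\mathcal{Z}(X)$ meets $\mathcal{Z}(Z)$ are not double-counted, and that no point of $\mathcal{Z}(X)$ accidentally has rank $1$ or fails to lie in $\mathcal{N}$. I expect this to be routine but it is the step where an off-by-one error would most easily creep in; everything else is a direct determinant computation.
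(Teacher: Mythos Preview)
Your proposal is correct and follows essentially the same approach as the paper's proof: compute $\mathscr{C}_8=\mathcal{Z}(XZ^2)$, identify the unique rank-$1$ point on $\mathcal{Z}(Y,Z)$, observe that $\pi_8\cap\mathcal{N}$ is the full line $\mathcal{Z}(X)$ contributing $q+1$ rank-$2$ points, and conclude by comparing point-orbit distributions. Your write-up is simply more explicit about the bookkeeping (the minor computation for rank~$1$, the double-counting check at $(0,1,0)$), which is fine.
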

\begin{proof}
Points of rank at most $2$ in $\Sigma_8$ correspond to points on the cubic curve $\mathscr{C}_8=\mathcal{Z}(XZ^2)$. Among these $2q+1$ points, there is a unique rank-1 point lying on $\mathscr{C}_8\cap\mathcal{Z}(Y,Z)$ and $q+1$ points in the nucleus plane lying on $\mathscr{C}_8 \cap\mathcal{Z}(X)$. Therefore, the point-orbit distribution of a plane in $\Sigma_8$ is $[1,q+1,q-1,q^2-q]$ and $\Sigma_8\not\in  \{\Sigma_1,\Sigma_2,\Sigma_3,\Sigma_4,\Sigma_5,\Sigma_6,\Sigma_7\}$ by their distinct point-orbit distributions.
\end{proof}
\vspace{0.3 cm}

\boxed{$(c-ii)$} Assume now that $Q_3\in T_{U}(\mathcal{C}(Q_3))\setminus \{N(\mathcal{C}(Q_3)),U\}$. We claim that the plane $\pi$ is then completely determined by the quadruple $(q_1,u,s_1,s_2)$, where the images $S_1=\nu(s_1)$ and $S_2=\nu(s_2)$ are the points of intersection of a secant line of $\cC_3$ through $Q_3$, i.e. $Q_3=T_{U}(\mathcal{C}(Q_3))\cap \langle S_1,S_2\rangle$. Clearly this quadruple determines $Q_1$ and $Q_3$. Since $\pi=\langle T_{Q_1}(\mathcal{C}(Q_2)),Q_3\rangle$, and the pair $(U,Q_1)$ determines $T_{Q_1}(\mathcal{C}(Q_2))$, the claim follows. Since $\PGL(3,q)$ acts transitively on such quadruples (to see this, fix the collinear triple $(u,s_1,s_2)$, and note that the translation group with axis $l_3$ acts transitively on the affine plane) the case (c-ii) give rise to just one orbit $\Sigma_9$. Without loss of generality, we may choose $Q_3$ as $(0,0,0,1,1,0)$  
 to obtain the following representative $$\Sigma_{9}:\begin{bmatrix} x&y&.\\y&z&z\\.&z&.          \end{bmatrix}.$$\\\par

\begin{Lemma}\label{2}
The point-orbit distribution of a plane in $\Sigma_{9}$ is $[1,1,2q-1,q^2-q]$. In particular, $\Sigma_9\not\in  \{\Sigma_1,\Sigma_2,\Sigma_3,\Sigma_4,\Sigma_5,\Sigma_6,\Sigma_7,\Sigma_8\}$.
\end{Lemma}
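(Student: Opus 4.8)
The plan is to argue directly with the representative $\pi_9$ of $\Sigma_9$ displayed above, compute its point-orbit distribution by an explicit count, and then read off the non-equivalence with the earlier orbits from the distributions already recorded in Table~\ref{tableplanes}.

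First I would take a general point $(x,y,0,z,z,0)$ of $\pi_9$, whose associated matrix has determinant $xz^2$ (recall $\operatorname{char}\bF_q=2$), so that the cubic curve of $\pi_9$ is $\mathscr{C}_9=\mathcal{Z}(XZ^2)$, namely the line $\mathcal{Z}(X)$ together with the repeated line $\mathcal{Z}(Z)$; these two lines meet in $(0,1,0)$, so $\pi_9$ contains exactly $2q+1$ points of rank at most~$2$. Next I would locate the rank-$1$ points: requiring $\nu(u)\in\pi_9$ (equivalently, that all $2\times 2$ minors of the matrix of the point vanish) forces $u_2=0$ and then $u_1=0$, hence $u=\langle e_1\rangle$, so the unique rank-$1$ point is parametrised by $(x,y,z)=(1,0,0)$ and $r_1=1$. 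Then I would intersect $\pi_9$ with the nucleus plane $\mathcal{N}=\mathcal{Z}(Y_0,Y_3,Y_5)$: a point $(x,y,0,z,z,0)$ has zero main diagonal precisely when $x=z=0$, so $\pi_9\cap\mathcal{N}$ is the single (rank-$2$) point parametrised by $(0,1,0)$ and $r_{2n}=1$. Subtracting, $r_{2s}=(2q+1)-1-1=2q-1$ and $r_3=(q^2+q+1)-(2q+1)=q^2-q$, so the point-orbit distribution of $\pi_9$ is $[1,1,2q-1,q^2-q]$.

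To conclude $\Sigma_9\notin\{\Sigma_1,\dots,\Sigma_8\}$ I would compare this $4$-tuple with the point-orbit distributions already computed for $\Sigma_1,\dots,\Sigma_8$: the orbits $\Sigma_1,\dots,\Sigma_5$ all have $r_1\geq 2$, the orbit $\Sigma_6$ has $r_{2n}=0$, and the orbits $\Sigma_7,\Sigma_8$ have $r_{2n}=q+1>1$; since point-orbit distributions are $K$-invariant, none of these can coincide with $\Sigma_9$. I do not expect any genuine obstacle here, the argument being a finite computation; the only point worth flagging is that $\mathscr{C}_9=\mathcal{Z}(XZ^2)$ is the same cubic as the one associated with $\Sigma_8$, so separating $\Sigma_8$ from $\Sigma_9$ really does require the intersection with $\mathcal{N}$ rather than the associated cubic curve.
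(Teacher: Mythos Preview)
Your proposal is correct and follows essentially the same line as the paper's own proof: compute the cubic $\mathscr{C}_9=\mathcal{Z}(XZ^2)$, identify the unique rank-$1$ point $(1,0,0)$ and the unique point $(0,1,0)$ in $\pi_9\cap\mathcal{N}$, and read off the distribution $[1,1,2q-1,q^2-q]$, then distinguish $\Sigma_9$ from $\Sigma_1,\dots,\Sigma_8$ via their point-orbit distributions. Your additional remark that the cubic coincides with that of $\Sigma_8$ (so the intersection with $\mathcal{N}$ is what separates them) is exactly the observation the paper records immediately after the lemma.
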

\begin{proof}
Similar to case $\Sigma_8$, points of rank at most $2$ in $\Sigma_9$ correspond to points on the cubic curve $\mathscr{C}_9=\mathcal{Z}(XZ^2)$. In particular, $\Sigma_9$ has a unique rank-$1$ point parametrized by $(1,0,0)$ and a unique rank-$2$ point in $\mathcal{N}$ 
parametrized by $(0,1,0)$. Therefore, the point-orbit distribution of a plane in $\Sigma_{9}$ is $[1,1,2q-1,q^2-q]$, and thus $\Sigma_9\not\in  \{\Sigma_1,\Sigma_2,\Sigma_3,\Sigma_4,\Sigma_5,\Sigma_6,\Sigma_7,\Sigma_8\}$.
\end{proof}

\begin{Remark}
The two planes $\pi_8$ and $\pi_9$ define the same cubic curve $\mathcal{Z}(XZ^2)$, however they are not $K$-equivalent.
\end{Remark}

 \begin{figure}[h]
 \centering

\tikzset{every picture/.style={line width=0.75pt}} 

\begin{tikzpicture}[x=0.75pt,y=0.75pt,yscale=-1,xscale=1]

\draw   (101.28,138.64) .. controls (94.17,128.91) and (101.57,110.42) .. (117.8,97.36) .. controls (134.04,84.29) and (152.97,81.6) .. (160.08,91.33) .. controls (167.2,101.07) and (159.8,119.55) .. (143.56,132.62) .. controls (127.32,145.68) and (108.39,148.38) .. (101.28,138.64) -- cycle ;
\draw   (103.06,147.95) .. controls (112.86,141.25) and (129.85,150.41) .. (141.02,168.4) .. controls (152.18,186.4) and (153.29,206.41) .. (143.49,213.11) .. controls (133.69,219.81) and (116.7,210.65) .. (105.53,192.65) .. controls (94.37,174.66) and (93.26,154.65) .. (103.06,147.95) -- cycle ;
\draw  [fill={rgb, 255:red, 0; green, 0; blue, 0 }  ,fill opacity=1 ] (108.6,145.29) .. controls (108.6,144.07) and (109.84,143.07) .. (111.37,143.07) .. controls (112.9,143.07) and (114.14,144.07) .. (114.14,145.29) .. controls (114.14,146.52) and (112.9,147.52) .. (111.37,147.52) .. controls (109.84,147.52) and (108.6,146.52) .. (108.6,145.29) -- cycle ;
\draw   (98.08,55) -- (219.59,55) -- (167.52,144.98) -- (46.01,144.98) -- cycle ;
\draw   (165.12,145.03) -- (222,249) -- (102.89,248.95) -- (46.02,144.98) -- cycle ;
\draw  [fill={rgb, 255:red, 0; green, 0; blue, 0 }  ,fill opacity=1 ] (125.82,79.05) .. controls (125.82,77.82) and (127.06,76.83) .. (128.59,76.83) .. controls (130.12,76.83) and (131.36,77.82) .. (131.36,79.05) .. controls (131.36,80.28) and (130.12,81.27) .. (128.59,81.27) .. controls (127.06,81.27) and (125.82,80.28) .. (125.82,79.05) -- cycle ;
\draw  [fill={rgb, 255:red, 0; green, 0; blue, 0 }  ,fill opacity=1 ] (106.56,198.16) .. controls (106.56,197.2) and (107.8,196.42) .. (109.33,196.42) .. controls (110.86,196.42) and (112.1,197.2) .. (112.1,198.16) .. controls (112.1,199.13) and (110.86,199.91) .. (109.33,199.91) .. controls (107.8,199.91) and (106.56,199.13) .. (106.56,198.16) -- cycle ;
\draw  [fill={rgb, 255:red, 0; green, 0; blue, 0 }  ,fill opacity=1 ] (104.63,223.35) .. controls (104.63,222.38) and (105.87,221.6) .. (107.4,221.6) .. controls (108.93,221.6) and (110.17,222.38) .. (110.17,223.35) .. controls (110.17,224.31) and (108.93,225.09) .. (107.4,225.09) .. controls (105.87,225.09) and (104.63,224.31) .. (104.63,223.35) -- cycle ;
\draw    (111.37,147.04) -- (107.4,236.43) ;
\draw  [fill={rgb, 255:red, 0; green, 0; blue, 0 }  ,fill opacity=1 ] (104.63,234.69) .. controls (104.63,233.72) and (105.87,232.94) .. (107.4,232.94) .. controls (108.93,232.94) and (110.17,233.72) .. (110.17,234.69) .. controls (110.17,235.65) and (108.93,236.43) .. (107.4,236.43) .. controls (105.87,236.43) and (104.63,235.65) .. (104.63,234.69) -- cycle ;
\draw [color={rgb, 255:red, 74; green, 144; blue, 226 }  ,draw opacity=1 ]   (143.49,213.11) -- (107.4,234.69) ;
\draw [color={rgb, 255:red, 74; green, 144; blue, 226 }  ,draw opacity=1 ]   (107.4,223.35) -- (96.81,171.88) ;
\draw    (172.15,92.09) -- (119.02,75.96) ;
\draw  [fill={rgb, 255:red, 0; green, 0; blue, 0 }  ,fill opacity=1 ] (149.99,86) .. controls (149.99,84.78) and (151.23,83.78) .. (152.76,83.78) .. controls (154.29,83.78) and (155.53,84.78) .. (155.53,86) .. controls (155.53,87.23) and (154.29,88.22) .. (152.76,88.22) .. controls (151.23,88.22) and (149.99,87.23) .. (149.99,86) -- cycle ;

\draw (180.03,237.64) node [anchor=north west][inner sep=0.75pt]  [font=\tiny]  {$\langle \mathcal{C}( Q_{3}) \rangle $};
\draw (173.97,58.51) node [anchor=north west][inner sep=0.75pt]  [font=\tiny]  {$\langle \mathcal{C}( Q_{2}) \rangle $};
\draw (146.12,72.78) node [anchor=north west][inner sep=0.75pt]  [font=\tiny]  {$Q_{1}$};
\draw (107.75,130.78) node [anchor=north west][inner sep=0.75pt]  [font=\tiny]  {$U$};
\draw (121.61,65.8) node [anchor=north west][inner sep=0.75pt]  [font=\tiny]  {$Q_{2}$};
\draw (88.7,214.96) node [anchor=north west][inner sep=0.75pt]  [font=\tiny]  {$Q_{3}$};
\draw (109.4,236.34) node [anchor=north west][inner sep=0.75pt]  [font=\tiny]  {$Q_{3} '$};
\draw (111.38,195.14) node [anchor=north west][inner sep=0.75pt]  [font=\tiny]  {$R_{3}$};

\end{tikzpicture}
 \caption{The configuration defined in case $(c$-$iii)$.}\label{fig:c-iii}
 \end{figure}
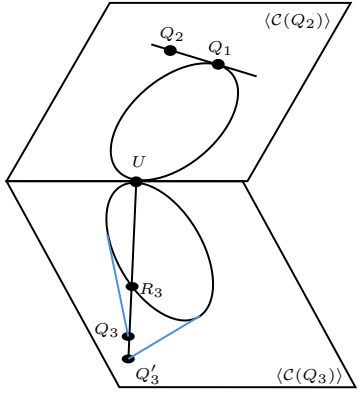

\boxed{$(c-iii)$} 
Finally, assume that $Q_3 \in \langle\mathcal{C}(Q_3)\rangle \setminus (\mathcal{C}(Q_3)\cup T_{U}(\mathcal{C}(Q_3)))$.  
Let $R_3=\nu(r_3)\neq U$ denote the intersection of $\cC(Q_3)$ with the secant line passing through $U$ and $Q_3$, and let $S_3=\nu(s_3)$ denote the point of intersection of the tangent line of $\cC(Q_3)$ through $Q_3$. Then $Q_3=\langle U,R_3\rangle \cap \langle N(\cC(Q_3)),S_3\rangle$.
This shows that in this case the plane $\pi$ is completely determined by the quadruple $(q_1,u,r_3,s_3)$. Since the group $\PGL(3,q)$ acts transitively on such quadruples, it follows that  (as illustrated in Figure \ref{fig:c-iii})
any other choice of a point $Q_3'\neq Q_3$ on $\langle\mathcal{C}(Q_3)\rangle \setminus( \mathcal{C}(Q_3)\cup T_{U}(\mathcal{C}(Q_3)))$ defines the same $K$-orbit $\Sigma_{10}$ represented by  
 $$\Sigma_{10}:\begin{bmatrix} x&y&.\\y&z&.\\.&.&z          \end{bmatrix},$$
 for the choice $Q_3=(0,0,0,1,0,1)$.

\begin{Lemma}\label{3}
The point-orbit distribution of a plane in $\Sigma_{10}$ is $[1,1,2q-1,q^2-q]$. In particular, $\Sigma_{10}\not\in  \{\Sigma_1,\Sigma_2,\Sigma_3,\Sigma_4,\Sigma_5,\Sigma_6,\Sigma_7,\Sigma_8,\Sigma_9\}$.
\end{Lemma}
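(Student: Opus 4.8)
The plan is to work with the displayed representative $\pi_{10}$ of $\Sigma_{10}$, read off its point-orbit distribution from the associated cubic curve, and then isolate the one further invariant needed to separate $\Sigma_{10}$ from the nine earlier orbits. First I would compute the determinant of the generic matrix $\begin{bmatrix} x&y&0\\ y&z&0\\ 0&0&z\end{bmatrix}$, which in characteristic $2$ equals $z(xz+y^2)$; hence the associated cubic is $\mathscr{C}_{10}=\mathcal{Z}\bigl(Z(XZ+Y^2)\bigr)$, the union of the line $\mathcal{Z}(Z)$ and the conic $\mathcal{Z}(XZ+Y^2)$, the latter non-singular by Lemma~\ref{lem:non-singular}. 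The two components meet only at $(1,0,0)$, so $\mathscr{C}_{10}$ carries exactly $(q+1)+(q+1)-1=2q+1$ rational points; these are precisely the points of rank at most $2$ in $\pi_{10}$, giving $r_3=(q^2+q+1)-(2q+1)=q^2-q$.

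Next I would locate the rank-$1$ point and the nucleus points among these $2q+1$ points by a short rank computation on the generic matrix: if $z\neq 0$ its last two rows are independent, so the rank is at least $2$, while if $z=0$ the rank is $1$ precisely when $y=0$ and $x\neq 0$; thus the unique rank-$1$ point is $(1,0,0)$ and $r_1=1$. For the nucleus plane $\mathcal{N}=\mathcal{Z}(Y_0,Y_3,Y_5)$ the diagonal vanishes only when $x=z=0$, leaving the single point $(0,1,0)$, which has rank $2$; hence $r_{2n}=1$ and $r_{2s}=(2q+1)-1-1=2q-1$. This establishes the point-orbit distribution $[1,1,2q-1,q^2-q]$.

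For the non-equivalence statement, comparison with Table~\ref{tableplanes} rules out $\Sigma_1,\dots,\Sigma_8$ at once: the orbits $\Sigma_1,\dots,\Sigma_5$ have $r_1\geq 2$, while $\Sigma_6,\Sigma_7,\Sigma_8$ have $r_1=1$ but $r_{2n}\in\{0,q+1\}\neq 1$. The delicate case, and the main obstacle, is $\Sigma_9$, which shares the point-orbit distribution $[1,1,2q-1,q^2-q]$, so the point-orbit distribution alone is insufficient and a finer invariant is required. Here I would use the associated cubic curves, noting that since $\det(AMA^T)=\det(A)^2\det(M)$ the projective-equivalence class of a plane's cubic is a $K$-invariant: the cubic of $\pi_9$ is $\mathcal{Z}(XZ^2)$ (computed in the proof of Lemma~\ref{2}), a product of linear forms whose singular locus is the whole line $\mathcal{Z}(Z)$, whereas $\mathscr{C}_{10}$ has the irreducible conic $\mathcal{Z}(XZ+Y^2)$ as a component and only the single singular point $(1,0,0)$. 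These two cubics are not $\PGL(3,q)$-equivalent, so $\Sigma_{10}\neq\Sigma_9$, which completes the proof; this mirrors the way line-orbit distributions, rather than point-orbit distributions, were needed to separate $\Sigma_3$ from $\Sigma_4$.
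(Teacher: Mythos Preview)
Your proof is correct and follows essentially the same route as the paper: compute the cubic $\mathscr{C}_{10}=\mathcal{Z}(Z(XZ+Y^2))$, count its $2q+1$ rational points, identify the unique rank-$1$ point $(1,0,0)$ and the unique nucleus point $(0,1,0)$, and rule out $\Sigma_1,\dots,\Sigma_8$ by point-orbit distribution. The only minor difference is in separating $\Sigma_{10}$ from $\Sigma_9$: the paper phrases it in terms of the line-orbit types in $\PG(5,q)$ of the components of the cubic (two lines of types $o_6$ and $o_{12,2}$ for $\mathscr{C}_9$, versus a non-singular conic together with a tangent line of type $o_6$ for $\mathscr{C}_{10}$), whereas you argue directly that the cubic curves themselves are not $\PGL(3,q)$-equivalent via their factorisation type (or singular locus); your version is slightly more self-contained since it does not appeal to the line-orbit classification, and your remark that $\det(AMA^T)=\det(A)^2\det(M)$ makes the $K$-invariance of the cubic explicit.
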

\begin{proof}
Let $\pi_{10}$ be the above representative of $\Sigma_{10}$. Points of rank at most $2$ in $\pi_{10}$ correspond to the $2q+1$ points on the cubic curve $\mathscr{C}_{10}=\mathcal{Z}(XZ^2+Y^2Z)$. In particular, $\pi_{10}$ has a unique rank-$1$ point  parametrized by $(1,0,0)$ and a unique point lying on $\pi_{10}\cap\mathcal{N}=\mathcal{Z}(X,Z)$ parametrized by $(0,1,0)$.

Therefore, the point-orbit distribution of a plane in $\Sigma_{10}$ is $[1,1,2q-1,q^2-q]$ and $\Sigma_{10}\not\in  \{\Sigma_1,\Sigma_2,\Sigma_3,\Sigma_4,\Sigma_5,\Sigma_6,\Sigma_7,\Sigma_8\}$  by comparison to their point-orbit distributions. 
It remains to show that $\Sigma_9\neq \Sigma_{10}$. But this follows immediately by observing that $\mathscr{C}_{9}$ is the union of two lines of type $o_6$ and $o_{12,2}$, while $\mathscr{C}_{10}$ is the union of a non-singular conic and one of its tangent lines (which is a line of type $o_6$).

\end{proof}

\subsubsection{(d) $q_1 \not\in l_2\cup l_3$}
 Finally, assume that $q_1 \not\in l_2\cup l_3$. As before, let $U=\nu(u)=\mathcal{C}(Q_2)\cap \mathcal{C}(Q_3)$. We distinguish the following two configurations: 
 $(d$-$i)$ $\pi\cap \mathcal{N}\neq \emptyset$ and $(d$-$ii)$ $\pi\cap \mathcal{N}= \emptyset$, where $\mathcal{N}$ is the nucleus plane.

 \subsubsubsection{$(d$-$i)$ $\pi\cap \mathcal{N}\neq \emptyset$}

Since $\pi$ intersects the nucleus plane, we may relabel our points such that $Q_2\in \mathcal N$.

If $Q_3=N(\mathcal{C}(Q_3))$ then the plane $\pi$ meets the nucleus plane in a line $\langle Q_2,Q_3\rangle$. Since the set of $q+1$ nuclei of the $q+1$ conics on $\cV(\bF_q)$ through the point $Q_1$ on $\cV(\bF_q)$, are the points of a line $L$ in the nucleus plane (by Lemma \ref{lem:nuclei_pencil}) it follows that $\pi$ is the span of $Q_1$, $Q_2$, and the point $Q'_3=\langle Q_2,Q_3\rangle\cap L$. This implies that $\pi$ belongs to the orbit $\Sigma_8$.

Assume next that $Q_3 \in T_{U}(\mathcal{C}(Q_3))\setminus \{N(\mathcal{C}(Q_3)),U\}$.
Consider a point $R_2=\nu(r_2)$ on $\cC(Q_2)$ different from $U$, and note that this point determines $Q_2$ as the intersection of the tangent line of $\cC(Q_2)$ at $U$ with the tangent line of $\cC(Q_2)$ at $R_2$. Also, let $R_3=\nu(r_3)$, where $r_3$ is the intersection point of the lines $\langle r_2,q_1\rangle$ with $l_3$. The points on $T_{U}(\mathcal{C}(Q_3))\setminus \{N(\mathcal{C}(Q_3)),U\}$ are in one-to-one correspondence with the points on the line $l_3$, different from $u$ and $r_3$, since any such point $r_3'$ on $l_3$ defines a secant line $\langle R_3,\nu(r'_3)\rangle$ which meets the tangent line $T_{U}(\mathcal{C}(Q_3))$.
The homology group with center $u$ and axis $\langle q_1,r_2\rangle$ acts transitively on these points of $l_3$, it follows that any choice of $Q_3$ on $T_{U}(\mathcal{C}(Q_3))\setminus \{N(\mathcal{C}(Q_3)),U\}$ will lead to a plane $\langle Q_1,Q_2,Q_3\rangle$ in $\PG(5,q)$ which belongs to the same $K$-orbit.
Without loss of generality, we may fix $u$, $q_1$ and $r_2$ as $\langle e_1\rangle$, $\langle e_2+e_3\rangle$, $\langle e_2\rangle$, and $l_3$ as $\langle e_1,e_3\rangle$. By the above, we may also choose $Q_3$ on $T_{U}(\mathcal{C}(Q_3))\setminus \{N(\mathcal{C}(Q_3)),U\}$. For $Q_3(1,0,1,0,0,0)$ we obtain the plane $\pi$ represented by 
$$\begin{bmatrix} x&y&x\\y&z&z\\x&z&z          \end{bmatrix}. 
$$  
However, since $\pi$ intersects the Veronese surface in two points, namely $(1,0,0,0,0,0)$ and $(1,0,1,1,1,0)$, implying that $\pi\in\{\Sigma_3,\Sigma_4,\Sigma_5\}$, this case does not lead to a new $K$-orbit. 

It remains to consider the case where $Q_3\in \langle \mathcal{C}(Q_3)\rangle \setminus T_{U}(\mathcal{C}(Q_3))$.
Let $R_3=\nu(r_3)$ denote the unique point of intersection different from $U$ of the conic $\cC(Q_3)$ with the secant line $\langle U,Q_3\rangle$. 
This situation is depicted in Figure \ref{fig:Sigma_11}. In what follows we show that the planes
$\pi=\langle Q_1,Q_2,Q_3\rangle$ and $\pi'=\langle Q_1,Q_2,Q'_3\rangle$ belong to the same $K$-orbit.
In order to do so, denote by $R'_3=\nu(r'_3)$ the point of tangency of the  unique tangent line of $\cC(Q_3)$ through $ Q_3$. The plane $\pi$ is then uniquely determined by the quadruple $(q_1,l_2,r_3,r'_3)$ since $Q_2$ is the nucleus of $\nu(l_2)$, and $Q_3=\langle U, R_3\rangle \cap T_{U}(\nu(l_3))$, 
where $l_3=\langle r_3,r'_3\rangle$, $u=l_2\cap l_3$, and $U=\nu(u)$.
The group $\PGL(2,q)$ acts transitively on such quadruples, since the stabiliser of the triple $(q_1,r_3,r'_3)$ acts transitively on the points of $l_3$ different from $r_3$ and $r'_3$ (which implies we may choose $u$ and the homology group with center $q_1$ and axis $l_3$ acts transitively on the lines through $u$ (which implies we may choose $l_2$)).
Choosing coordinates as in the previous case, but now with $Q_3$ equal to the point with coordinates $(1,0,0,0,0,1)$, we obtain the orbit $\Sigma_{11}$ represented by 
$$\Sigma_{11}:\begin{bmatrix} x&y&.\\y&z&z\\.&z&x+z         \end{bmatrix} .$$

 \begin{figure}[h]
 \centering

\tikzset{every picture/.style={line width=0.75pt}} 

\begin{tikzpicture}[x=0.75pt,y=0.75pt,yscale=-1,xscale=1]

\draw   (121.28,158.64) .. controls (114.17,148.91) and (121.57,130.42) .. (137.8,117.36) .. controls (154.04,104.29) and (172.97,101.6) .. (180.08,111.33) .. controls (187.2,121.07) and (179.8,139.55) .. (163.56,152.62) .. controls (147.32,165.68) and (128.39,168.38) .. (121.28,158.64) -- cycle ;
\draw   (123.06,167.95) .. controls (132.86,161.25) and (149.85,170.41) .. (161.02,188.4) .. controls (172.18,206.4) and (173.29,226.41) .. (163.49,233.11) .. controls (153.69,239.81) and (136.7,230.65) .. (125.53,212.65) .. controls (114.37,194.66) and (113.26,174.65) .. (123.06,167.95) -- cycle ;
\draw  [fill={rgb, 255:red, 0; green, 0; blue, 0 }  ,fill opacity=1 ] (128.6,165.29) .. controls (128.6,164.07) and (129.84,163.07) .. (131.37,163.07) .. controls (132.9,163.07) and (134.14,164.07) .. (134.14,165.29) .. controls (134.14,166.52) and (132.9,167.52) .. (131.37,167.52) .. controls (129.84,167.52) and (128.6,166.52) .. (128.6,165.29) -- cycle ;
\draw   (118.08,75) -- (239.59,75) -- (187.52,164.98) -- (66.01,164.98) -- cycle ;
\draw   (185.12,165.03) -- (242,269) -- (122.89,268.95) -- (66.02,164.98) -- cycle ;
\draw  [fill={rgb, 255:red, 0; green, 0; blue, 0 }  ,fill opacity=1 ] (84.23,148) .. controls (84.23,146.77) and (85.47,145.78) .. (87,145.78) .. controls (88.53,145.78) and (89.77,146.77) .. (89.77,148) .. controls (89.77,149.23) and (88.53,150.22) .. (87,150.22) .. controls (85.47,150.22) and (84.23,149.23) .. (84.23,148) -- cycle ;
\draw  [fill={rgb, 255:red, 0; green, 0; blue, 0 }  ,fill opacity=1 ] (126.56,218.16) .. controls (126.56,217.2) and (127.8,216.42) .. (129.33,216.42) .. controls (130.86,216.42) and (132.1,217.2) .. (132.1,218.16) .. controls (132.1,219.13) and (130.86,219.91) .. (129.33,219.91) .. controls (127.8,219.91) and (126.56,219.13) .. (126.56,218.16) -- cycle ;
\draw  [fill={rgb, 255:red, 0; green, 0; blue, 0 }  ,fill opacity=1 ] (124.63,243.35) .. controls (124.63,242.38) and (125.87,241.6) .. (127.4,241.6) .. controls (128.93,241.6) and (130.17,242.38) .. (130.17,243.35) .. controls (130.17,244.31) and (128.93,245.09) .. (127.4,245.09) .. controls (125.87,245.09) and (124.63,244.31) .. (124.63,243.35) -- cycle ;
\draw    (131.37,167.04) -- (127.4,256.43) ;
\draw  [fill={rgb, 255:red, 0; green, 0; blue, 0 }  ,fill opacity=1 ] (124.63,254.69) .. controls (124.63,253.72) and (125.87,252.94) .. (127.4,252.94) .. controls (128.93,252.94) and (130.17,253.72) .. (130.17,254.69) .. controls (130.17,255.65) and (128.93,256.43) .. (127.4,256.43) .. controls (125.87,256.43) and (124.63,255.65) .. (124.63,254.69) -- cycle ;
\draw [color={rgb, 255:red, 74; green, 144; blue, 226 }  ,draw opacity=1 ]   (163.49,233.11) -- (127.4,254.69) ;
\draw [color={rgb, 255:red, 74; green, 144; blue, 226 }  ,draw opacity=1 ]   (127.4,243.35) -- (116.81,191.88) ;
\draw    (137.8,117.36) -- (125.63,124.7) -- (87,148) ;
\draw  [fill={rgb, 255:red, 0; green, 0; blue, 0 }  ,fill opacity=1 ] (135.04,117.36) .. controls (135.04,116.13) and (136.27,115.14) .. (137.8,115.14) .. controls (139.33,115.14) and (140.57,116.13) .. (140.57,117.36) .. controls (140.57,118.59) and (139.33,119.58) .. (137.8,119.58) .. controls (136.27,119.58) and (135.04,118.59) .. (135.04,117.36) -- cycle ;
\draw    (131.37,165.29) -- (87,148) ;
\draw  [fill={rgb, 255:red, 0; green, 0; blue, 0 }  ,fill opacity=1 ] (213.04,163.36) .. controls (213.04,162.13) and (214.27,161.14) .. (215.8,161.14) .. controls (217.33,161.14) and (218.57,162.13) .. (218.57,163.36) .. controls (218.57,164.59) and (217.33,165.58) .. (215.8,165.58) .. controls (214.27,165.58) and (213.04,164.59) .. (213.04,163.36) -- cycle ;

\draw (200.03,257.64) node [anchor=north west][inner sep=0.75pt]  [font=\tiny]  {$\langle \mathcal{C}( Q_{3}) \rangle $};
\draw (193.97,78.51) node [anchor=north west][inner sep=0.75pt]  [font=\tiny]  {$\langle \mathcal{C}( Q_{2}) \rangle $};
\draw (211.12,150.78) node [anchor=north west][inner sep=0.75pt]  [font=\tiny]  {$Q_{1}$};
\draw (127.75,150.78) node [anchor=north west][inner sep=0.75pt]  [font=\tiny]  {$U$};
\draw (78.61,151.8) node [anchor=north west][inner sep=0.75pt]  [font=\tiny]  {$Q_{2}$};
\draw (108.7,234.96) node [anchor=north west][inner sep=0.75pt]  [font=\tiny]  {$Q_{3}$};
\draw (129.4,256.34) node [anchor=north west][inner sep=0.75pt]  [font=\tiny]  {$Q_{3} '$};
\draw (131.38,215.14) node [anchor=north west][inner sep=0.75pt]  [font=\tiny]  {$R_{3}$};
\draw (125.12,104.78) node [anchor=north west][inner sep=0.75pt]  [font=\tiny]  {$R_{2}$};

\end{tikzpicture}
 \caption{The configuration defining $\Sigma_{11}$.}\label{fig:Sigma_11}
 \end{figure}
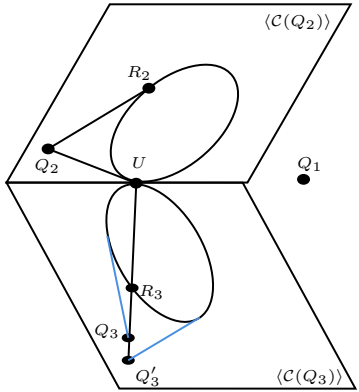

For future reference, we include the following. 
 
\begin{Lemma}
The line $\langle Q_1,Q_2\rangle$ in the representative of the orbit $\Sigma_{11}$ is of type $o_{8,2}$.
\end{Lemma}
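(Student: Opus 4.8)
The plan is to exhibit the line $\langle Q_1,Q_2\rangle$ explicitly inside the representative of $\Sigma_{11}$, compute the ranks of its points, and read off the $K$-orbit from Table~\ref{tableoflines}. In the representative
\[
\begin{bmatrix} x&y&0\\ y&z&z\\ 0&z&x+z\end{bmatrix},
\]
the point $Q_1$ is the coefficient matrix of the parameter $z$ (the rank-$1$ point) and $Q_2$ is the coefficient matrix of $y$ (the rank-$2$ point lying in $\mathcal N$), so $\langle Q_1,Q_2\rangle$ is exactly the set of points obtained by putting $x=0$, namely the matrices $M(y,z)=\begin{bmatrix} 0&y&0\\ y&z&z\\ 0&z&z\end{bmatrix}$ with $(y,z)\in\PG(1,q)$.

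The key computation is $\det M(y,z)$. Expanding along the first row gives $\det M(y,z)=y^2z$ (the characteristic is $2$), so the rank of $M(y,z)$ drops below $3$ precisely when $y=0$ or $z=0$. For $z=0$, $y\neq 0$ we obtain the matrix $\begin{bmatrix}0&1&0\\1&0&0\\0&0&0\end{bmatrix}$, which has rank $2$ and zero diagonal, hence lies in the nucleus plane $\mathcal N=\mathcal Z(Y_0,Y_3,Y_5)$; this is the point $Q_2$. For $y=0$, $z\neq 0$ we obtain $\begin{bmatrix}0&0&0\\0&1&1\\0&1&1\end{bmatrix}=vv^T$ with $v=(0,1,1)$, which has rank $1$; this is the point $Q_1$. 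The remaining $q-1$ points of the line, those with $y\neq 0$ and $z\neq 0$, have nonzero determinant and hence rank $3$.

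Consequently the point-orbit distribution of $\langle Q_1,Q_2\rangle$ is $[1,1,0,q-1]$, and inspecting Table~\ref{tableoflines} the only $K$-orbit of lines of $\PG(5,q)$, $q$ even, with this point-orbit distribution is $o_{8,2}$; therefore $\langle Q_1,Q_2\rangle$ is of type $o_{8,2}$. There is no real obstacle in this argument: it is a one-line determinant computation followed by the observation that the distribution $[1,1,0,q-1]$ singles out $o_{8,2}$ among the fifteen line orbits. The only point worth verifying carefully is precisely that last uniqueness claim, i.e.\ that no other row of Table~\ref{tableoflines} has point-orbit distribution $[1,1,0,q-1]$, which is immediate from the table.
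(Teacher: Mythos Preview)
Your proof is correct. The identification of $Q_1$ and $Q_2$ with the coefficient matrices of $z$ and $y$ is right (and unambiguous, since the plane has a unique rank-$1$ point and a unique point in $\mathcal N$), the determinant $\det M(y,z)=y^2z$ is computed correctly, and the resulting point-orbit distribution $[1,1,0,q-1]$ indeed singles out $o_{8,2}$ in Table~\ref{tableoflines}.

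The paper argues differently: rather than computing the full distribution, it observes that any line containing a rank-$1$ point and a point of $\mathcal N$ must lie in $o_6$ or $o_{8,2}$ (these being the only rows of Table~\ref{tableoflines} with $r_1\geq 1$ and $r_{2n}\geq 1$), and then rules out $o_6$ geometrically: a line of type $o_6$ is a tangent line of a conic of $\cV(\Fq)$, which would force $Q_1\in\cC(Q_2)$, contradicting the standing hypothesis $q_1\notin l_2$ of case~(d). Your approach is entirely self-contained and does not need the geometric description of $o_6$ or the case hypothesis; the paper's argument, on the other hand, avoids any explicit determinant computation and makes visible \emph{why} the line cannot be $o_6$ in this configuration.
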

\begin{proof}
This can be verified computationally, but it also follows from the following geometric argument. The line joining $Q_2$ and the unique rank-1 point $Q_1$ is either of type $o_6$ or $o_{8,2}$ by Table \ref{tableoflines}. Since lines of type $o_6$ in $\PG(5,q)$ are tangent lines to conics in $\cV(\Fq)$, and we are in the case where $Q_1$ does not belong to $\cC(Q_2)$, it follows that $\langle Q_1,Q_2\rangle$ is necessarily of type $o_{8,2}$. 
\end{proof}

\begin{Lemma}
The point-orbit distribution of a plane in $\Sigma_{11}$ is $[1,1,q-1,q^2]$. In particular, $\Sigma_{11}\not\in  \{\Sigma_1,\Sigma_2,\Sigma_3,\Sigma_4,\Sigma_5,\Sigma_6,\Sigma_7,\Sigma_8,\Sigma_9,\Sigma_{10}\}$.
\end{Lemma}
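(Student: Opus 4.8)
The plan is to follow the template of the preceding lemmas. Let $\pi_{11}$ be the given representative of $\Sigma_{11}$, with associated matrix
\[
M_{\pi_{11}}=\begin{bmatrix} x&y&0\\y&z&z\\0&z&x+z\end{bmatrix}.
\]
First I would expand the determinant to obtain the associated cubic $\mathscr{C}_{11}=\mathcal{Z}(X^2Z+XY^2+Y^2Z)$. Next, to locate the rank-$1$ points I would impose that all $2\times 2$ minors of $M_{\pi_{11}}$ vanish: the minor on rows $\{1,2\}$ and columns $\{1,3\}$ forces $xz=0$, combined with the principal $\{1,2\}$-minor $xz=y^2$ this gives $y=0$, and then the $\{1,3\}$-minor forces $x=0$. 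Hence the unique rank-$1$ point of $\pi_{11}$ is $(x,y,z)=(0,0,1)$, which is indeed the point $Q_1=\nu(\langle e_2+e_3\rangle)$ from the construction. Similarly $\pi_{11}\cap\mathcal{N}$ consists of the matrices with zero diagonal, i.e. $x=z=0$, which is the single rank-$2$ point $(0,1,0)$. Both points lie on $\mathscr{C}_{11}$.

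The main step is to count the $\bF_q$-rational points of $\mathscr{C}_{11}$, which correspond to the points of rank at most $2$ in $\pi_{11}$. I would do this by parametrisation. On the line $Y=0$ the equation becomes $X^2Z=0$, contributing exactly the two points $(1,0,0)$ and $(0,0,1)$. On the affine chart $Y=1$ the equation reads $Z(X^2+1)=X$, i.e. $Z=X/(X+1)^2$, which has a unique solution $Z$ for each $X\in\bF_q\setminus\{1\}$ and no solution for $X=1$; this gives a further $q-1$ points, disjoint from the previous two. Hence $|\mathscr{C}_{11}(\bF_q)|=q+1$. As a cross-check, one can verify that $\mathscr{C}_{11}$ is irreducible with a single singular point at $(0,0,1)$ whose tangent cone (in the chart $Z=1$) is the double line $\mathcal{Z}((X+Y)^2)$, so $\mathscr{C}_{11}$ is a cuspidal cubic, which has exactly $q+1$ rational points over $\bF_q$. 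I expect this point count, together with the bookkeeping of where the rank-$1$ point and the nucleus point sit among the $q+1$ points (namely $(0,0,1)$ is the rank-$1$ point on $Y=0$, $(0,1,0)$ is the unique point of $\pi_{11}\cap\mathcal{N}$ coming from the chart, and the remaining $q-2$ chart points together with $(1,0,0)$ give $q-1$ further rank-$2$ points off $\mathcal{N}$), to be the only non-routine part.

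Putting this together, $r_1=1$, $r_{2n}=1$, $r_{2s}=(q+1)-1-1=q-1$, and the remaining $q^2+q+1-(q+1)=q^2$ points of $\pi_{11}$ have rank $3$, giving the point-orbit distribution $[1,1,q-1,q^2]$. Finally, comparing with the point-orbit distributions of $\Sigma_1,\dots,\Sigma_{10}$ established in the preceding lemmas, the $4$-tuple $[1,1,q-1,q^2]$ is distinct from each of them: for $\Sigma_1,\dots,\Sigma_8$ already one of the first two entries differs, while $\Sigma_9$ and $\Sigma_{10}$ both have distribution $[1,1,2q-1,q^2-q]$, which differs from $[1,1,q-1,q^2]$ in the last two entries since $2q-1\neq q-1$. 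Hence $\Sigma_{11}\notin\{\Sigma_1,\dots,\Sigma_{10}\}$.
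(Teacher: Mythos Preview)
Your proposal is correct and follows essentially the same template as the paper's proof: compute the cubic $\mathscr{C}_{11}=\mathcal{Z}(X^2Z+XY^2+Y^2Z)$, identify the unique rank-$1$ point $(0,0,1)$ and the unique nucleus point $(0,1,0)=\pi_{11}\cap\mathcal{N}=\mathcal{Z}(X,Z)$, and then conclude by comparing point-orbit distributions. The only difference is that you actually supply the point count on $\mathscr{C}_{11}$ (via the parametrisation $Z=X/(X+1)^2$ on the chart $Y=1$, and the observation that the curve is cuspidal), whereas the paper simply asserts the resulting distribution $[1,1,q-1,q^2]$ without this justification; your added detail is a genuine improvement in rigor but not a different method.
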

\begin{proof}
Let $\pi_{11}$ be the above representative of $\Sigma_{11}$. Points of rank at most $2$ in $\pi_{11}$ correspond to points on the cubic curve $\mathscr{C}_{11}=\mathcal{Z}(X^2Z+XY^2+Y^2Z)$. Particularly, $\pi_{11}$ has a unique rank-$1$ point and a unique rank-$2$ point lying on $\mathcal{N}\cap \pi_{11}=\mathcal{Z}(X,Z)$ with parameterized coordinates $(0,0,1)$ and $(0,1,0)$ respectively. 
 Therefore, the point-orbit distribution of a plane in $\Sigma_{11}$ is $[1,1,q-1,q^2]$ and $\Sigma_{11}$ is distinct from the previously defined orbits by comparison to their point-orbit distributions.
\end{proof}

 \subsubsubsection{$(d$-$ii)$ $\pi\cap \mathcal{N}= \emptyset$}\label{casedii}
Assume now that $\pi$ intersects the nucleus plane trivially, where the unique rank-$1$ point $Q_1$ is not lying on $\mathcal{C}(Q_2)\cup \mathcal{C}(Q_3)$.  
We begin with an essential lemma that gives a correspondence between types of lines  
 spanned by two rank-$2$ points in $\PG(5,q)$
 and their associated configurations defined by $\{\mathcal{C}(Q_2), \mathcal{C}(Q_3),U\}$, where $\mathcal{C}(Q_2)\neq\mathcal{C}(Q_3)$ and $U=\mathcal{C}(Q_2)\cap\mathcal{C}(Q_3)$.

\begin{Lemma}\label{linesspannedbyrank2}
Let $L$ be a line in $\PG(5,q)$ intersecting the nucleus plane trivially and spanned by two rank-$2$ points $R$ and $S$, where $\mathcal{C}(R)\neq\mathcal{C}(S)$. Then, $L\in \{o_{13,2},o_{14}\}$. Furthermore, if $V=\mathcal{C}(R)\cap\mathcal{C}(S)$, then $L \in o_{13,2}$ if and only if $R \in T_{V}(\mathcal{C}(R))$ and $S \not\in T_{V}(\mathcal{C}(S))$, and $L \in o_{14}$ if and only if $R \not\in T_{V}(\mathcal{C}(R))$ and $S \not\in T_{V}(\mathcal{C}(S))$. In particular, if $L \in o_{14}$, then the preimage  under the Veronese embedding of the three conics associated with the rank-2 points on $L$ are three non-concurrent lines in $\PG(2,q)$.
\end{Lemma}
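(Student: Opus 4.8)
The plan is to first narrow $L$ down to one of only two orbits using the line classification, and then to bring $R$ and $S$ into normal form and read everything off from a single $3\times3$ determinant.

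\textbf{Step 1 (Reducing $L$ to two orbits).} Since $L\cap\mathcal N=\emptyset$, the line $L$ has no rank-$2$ point in the nucleus plane, and since $L=\langle R,S\rangle$ with $R\neq S$ both of rank $2$, it has at least two rank-$2$ points. By Table~\ref{tableoflines}, the only line orbits with no rank-$2$ point in $\mathcal N$ and at least two rank-$2$ points are $o_5$, $o_{10}$, $o_{13,2}$ and $o_{14}$. A line of type $o_5$ or $o_{10}$ is contained in a conic plane $\langle\mathcal C\rangle$ (see \cite{lines}), and a conic plane contains only rank-$2$ points with associated conic $\mathcal C$; hence any two of its rank-$2$ points have the same associated conic, contradicting $\mathcal C(R)\neq\mathcal C(S)$. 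Therefore $L\in\{o_{13,2},o_{14}\}$, and by Table~\ref{tableoflines} these are distinguished by the number of rank-$2$ points (two for $o_{13,2}$, three for $o_{14}$); so it remains only to count the rank-$2$ points of $L$.

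\textbf{Step 2 (Normal forms).} Set $\ell_R=\nu^{-1}(\mathcal C(R))$ and $\ell_S=\nu^{-1}(\mathcal C(S))$, two distinct lines of $\PG(2,q)$ meeting in $u:=\nu^{-1}(V)$. As $R$ is of rank $2$ and not in $\mathcal N$, it lies in $\langle\mathcal C(R)\rangle\setminus(\mathcal C(R)\cup\{N(\mathcal C(R))\})$, and likewise for $S$; moreover, since $q$ is even, $T_V(\mathcal C(R))=\langle V,N(\mathcal C(R))\rangle$ and similarly for $S$. Using the transitivity of $\PGL(3,q)$ on ordered pairs of distinct lines, together with a computation showing that the stabiliser of $(\ell_R,\ell_S)$ acts transitively on the admissible positions of $R$ and then, with $R$ fixed, on those of $S$ (argued with elation and homology subgroups as in Section~\ref{qwe}, and using that every element of $\bF_q$ is a square), one reduces --- up to $K$-equivalence and the symmetry $R\leftrightarrow S$ --- to three cases, which, taking $\ell_R=\mathcal Z(Z)$ and $\ell_S=\mathcal Z(X)$, have representatives: (A) $R\notin T_V(\mathcal C(R))$ and $S\notin T_V(\mathcal C(S))$, with $M_R=\mathrm{diag}(1,1,0)$ and $M_S=\mathrm{diag}(0,1,1)$; (B) $R\in T_V(\mathcal C(R))$ and $S\notin T_V(\mathcal C(S))$, with $M_R=\left[\begin{smallmatrix}0&1&0\\1&1&0\\0&0&0\end{smallmatrix}\right]$ and $M_S=\mathrm{diag}(0,1,1)$; and (C) $R\in T_V(\mathcal C(R))$ and $S\in T_V(\mathcal C(S))$, with $M_R=\left[\begin{smallmatrix}0&1&0\\1&1&0\\0&0&0\end{smallmatrix}\right]$ and $M_S=\left[\begin{smallmatrix}0&0&0\\0&1&1\\0&1&0\end{smallmatrix}\right]$.

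\textbf{Step 3 (Determinant computation and conclusion).} The points of $L$ of rank at most $2$ are precisely the zeros in $\PG(1,q)$ of the binary cubic $c(\lambda,\mu):=\det(\lambda M_R+\mu M_S)$, which, since $\det M_R=\det M_S=0$, has the form $c=\lambda\mu(\alpha\lambda+\beta\mu)$. In case~(A) one finds $c=\lambda\mu(\lambda+\mu)$, with three distinct zeros whose matrices are $\mathrm{diag}(1,1,0)$, $\mathrm{diag}(0,1,1)$ and $\mathrm{diag}(1,0,1)$; each is a sum of two rank-$1$ matrices, hence lies on a secant of $\cV(\Fq)$, has rank $2$ and (having nonzero diagonal) is not in $\mathcal N$, so $L$ has three rank-$2$ points and $L\in o_{14}$. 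Their associated conics are $\nu(\mathcal Z(Z))$, $\nu(\mathcal Z(X))$ and $\nu(\mathcal Z(Y))$, whose preimages $\mathcal Z(Z),\mathcal Z(X),\mathcal Z(Y)$ form a triangle, which proves the last assertion of the lemma. In case~(B) one finds $c=\lambda^2\mu$, with exactly two zeros, namely $R$ and $S$, so $L$ has two rank-$2$ points and $L\in o_{13,2}$. In case~(C) one finds $c\equiv0$, so every point of $L$ has rank at most $2$; but the point with $(\lambda:\mu)=(1:1)$ has matrix $\left[\begin{smallmatrix}0&1&0\\1&0&1\\0&1&0\end{smallmatrix}\right]$, which has zero diagonal and therefore lies in $\mathcal N$, contradicting $L\cap\mathcal N=\emptyset$; hence case~(C) cannot occur. (In cases (A) and (B) one also checks directly that $L\cap\mathcal N=\emptyset$, consistently with the hypothesis.) Recalling the symmetry $R\leftrightarrow S$, this yields both equivalences in the statement.

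\textbf{Expected main obstacle.} The technical heart is Step~2: one has to verify carefully that the stabiliser of $(\ell_R,\ell_S)$ is transitive enough --- first on the possible positions of $R$, and then, with $R$ fixed, still on those of $S$ --- so that the three matrix pairs above genuinely exhaust all cases up to equivalence; once that is settled, the rest is an immediate $3\times3$ determinant computation. It is also worth emphasising that the hypothesis $L\cap\mathcal N=\emptyset$ is used exactly to exclude the degenerate case~(C) (equivalently, the case $c\equiv0$), and so cannot be dropped.
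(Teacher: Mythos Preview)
Your argument is correct, and the transitivity in Step~2 really does go through (the stabiliser of $(\ell_R,\ell_S,R)$ still has enough freedom via $a_{23},a_{33}$ to normalise $S$ in each of cases (A)--(C)); so the obstacle you flagged is not an actual gap.

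Your route differs from the paper's, however. After the same reduction to $\{o_{13,2},o_{14}\}$, the paper proceeds in the opposite direction: it simply picks the explicit representatives of $o_{13,2}$ and $o_{14}$ from \cite[Table~2]{lines}, writes down their rank-$2$ points, and verifies the tangent conditions directly on those coordinates (checking all three pairs for $o_{14}$, which also yields the triangle of preimage lines). This is shorter and avoids any transitivity argument, at the price of importing the canonical forms from \cite{lines}. Your approach is more self-contained and more explanatory: it classifies the possible configurations of $(R,S)$ from scratch, recovers the orbit by counting rank-$2$ points via a binary cubic, and in particular shows \emph{why} the ``both on tangent'' configuration is impossible---namely, because it forces a point of $L$ into the nucleus plane. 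The paper's proof never isolates this case, since starting from the representatives makes it invisible.
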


\begin{proof}
The hyperplane spanned by $\mathcal{C}(R)$ and $\mathcal{C}(S)$ intersects $\mathcal{V}(\Fq)$ in $\mathcal{C}(R)\cup \mathcal{C}(S)$, and thus $L$ has no rank-$1$ points. It follows that $L\in\{o_{10},o_{13,2},o_{14}\}$ by Table \ref{tableoflines}. Since a line of type $o_{10}$ lies in a conic plane and $\mathcal{C}(R)\neq\mathcal{C}(S)$, we conclude that $L\in \{o_{13,2},o_{14}\}$. Let $L_{13,2}$ and $L_{14}$ be the representatives of $o_{13,2}$ and $o_{14}$ in \cite[Table 2]{lines}. The line $L_{13,2}$ has two rank-$2$ points with homogeneous coordinates $(0,1,0,1,0,0)$, and $(0,0,0,1,0,1)$, while the line $L_{14}$ has three rank-2 points defined as $$P_1(1,0,0,1,0,0),P_2(0,0,0,1,0,1),\mbox{~and~}P_3(1,0,0,0,0,1)\}.$$ 
By a direct computation, we have 
$$(0,1,0,1,0,0)\in T_{V}(\mathcal{C}((0,1,0,1,0,0)))$$ 
and 
$$ (0,0,0,1,0,1)\not\in T_{V}(\mathcal{C}((0,0,0,1,0,1)))$$ 
where $V=\nu(e_2)$. A similar computation shows that the three conics associated with $P_i$, $1\leq i\leq 3$; 
\begin{center}
$\mathcal{C}(P_1)=\mathcal{Z}(Y_0Y_3+Y_1^2,Y_2,Y_4,Y_5),$\\

$\mathcal{C}(P_2)=\mathcal{Z}(Y_3Y_5+Y_4^2,Y_0,Y_1,Y_2),$\\

$\mathcal{C}(P_3)=\mathcal{Z}(Y_0Y_5+Y_2^2,Y_1,Y_3,Y_4);$
\end{center}
intersect pairwise in $V\in\{U_{12}(0,0,0,1,0,0),U_{13}(1,0,0,0,0,0),U_{23}(0,0,0,0,0,1)\}$, where each pair $(P_i,P_j)$, $i< j$, has both of its points not lying on the tangent of their conics through $U_{ij}$.
\end{proof}

\begin{Remark}\label{P121314}
By inspecting point-orbit distributions of lines in $\PG(5,q)$, we can see that $\langle Q_1,Q_i\rangle\in o_{8,1} $ for $i=2,3$. Moreover, Lemma \ref{linesspannedbyrank2} implies that $\langle Q_2,Q_3\rangle\in \{o_{13,2},o_{14}\} $, where:  
 $\langle Q_2,Q_3\rangle\in o_{13,2}$ if and only if  $Q_2\in T_{U}(\mathcal{C}(Q_2))$ and $Q_3\not\in T_{U}(\mathcal{C}(Q_3))$, and  
 $\langle Q_2,Q_3\rangle\in o_{14}$ if and only if  $Q_2\not\in T_{U}(\mathcal{C}(Q_2))$ and $Q_3 \not\in T_{U}(\mathcal{C}(Q_3))$. 

\end{Remark}

Next, we consider the two possibilities where $\pi$ can be represented by $\langle Q_1,Q_2,Q_3\rangle$ where the unique rank-$1$ point $Q_1$ is not lying on $\mathcal{C}(Q_2)\cup \mathcal{C}(Q_3)$ such that: $(d$-$ii$-$A)$ $\langle Q_2,Q_3\rangle\in o_{13,2}$ or $(d$-$ii$-$B)$ $\langle Q_2,Q_3\rangle\not\in o_{13,2}$, i.e, $\pi$ has no line of type $o_{13,2}$ and $\langle Q_2,Q_3\rangle\in o_{14}$ by Lemma \ref{linesspannedbyrank2}.\\

\boxed{$(d$-$ii$-$A)$}
Let $\pi=\langle Q_1,Q_2,Q_3\rangle$ where the unique rank-$1$ point $Q_1$ is not lying on $\mathcal{C}(Q_2)\cup \mathcal{C}(Q_3)$, $\pi\cap \mathcal{N}=\emptyset$ and $\langle Q_2,Q_3\rangle\in o_{13,2}$. Without loss of generality, take $\langle Q_2,Q_3\rangle$ as the representative of $o_{13,2}$ in \cite[Table 2]{lines}, and let $Q_1$ be a point with homogeneous coordinates $\nu(a,b,c)$. As $L_i=\langle Q_1,Q_i\rangle\in o_{8,1} $ for $i=2,3$, it follows that $L_i$ 
 has a unique rank-1 point and a unique rank-2 point not contained in the nucleus plane, and thus $a,c\neq0$. Hence we may assume $Q_1=\nu(1,b,c)$. 
  Then $\pi$ can be represented by 
  $$
  \pi_{b,c}= \begin{bmatrix} x&bx+y&cx\\bx+y&b^2x+y+z&bcx\\ cx&bcx&c^2x+z \end{bmatrix}.
  $$
  This plane is $K$-equivalent to 
  $$
  \pi_{c} = \begin{bmatrix} x&y&cx\\y&y+z&.\\ cx&.&c^2x+z \end{bmatrix},
  $$ 
  which can be seen by defining $X$ as 
  $$X=\begin{bmatrix} 1&0&0\\b&1&0\\0&0&1\end{bmatrix}
  $$ 
  and observing that $X\pi_{c}X^T=\pi_{b,c}$.\\

Before proceeding with the study of planes of the form $\pi_c$, $c\neq0$, recall the definition and the characterisation of inflexion points in Definition \ref{defofinflexion} and Lemma \ref{hessian}. Note that, for fields of characteristic different from two, inflexion points are points of the intersection of the cubic with the Hessian (the determinant of the $3\times 3$ matrix of second derivatives), which becomes zero over fields of characteristic 2. For further details about inflexion points over fields of characteristic two, we refer to \cite{glynn}.

\begin{Lemma}\label{pic}
A plane $\pi_c$ with $c\neq 0$ has 
\begin{itemize}
\item three inflexion points if and only if $q\neq 4$, $Tr(c)=Tr(1)$ and $c^{-1}$ is admissible.
\item one inflexion point if and only if  $Tr(c)\neq Tr(1)$.
\item no inflexion points if and only if  $Tr(c)=Tr(1)$ and $c^{-1}$ is not admissible.
\end{itemize}
\end{Lemma}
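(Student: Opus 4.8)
The plan is to compute the planar cubic $\mathscr{C}(\pi_c)$ explicitly, apply the inflexion‑point criterion of Lemma~\ref{hessian}, and then reduce the count to Theorem~\ref{cubicequation}.

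First I would expand the determinant of the matrix representation of $\pi_c$; a direct computation over a field of characteristic~$2$ gives
\[
\mathscr{C}(\pi_c)=\mathcal{Z}(f),\qquad f=c^2XY^2+XYZ+XZ^2+Y^2Z .
\]
In the notation of \eqref{cubiccurve1}--\eqref{matrix} this curve is $\mathscr{C}(A,a_{012})$ with
\[
A=\begin{bmatrix}0&c^2&1\\0&0&0\\0&1&0\end{bmatrix},\qquad a_{012}=1\neq 0 ,
\]
so Lemma~\ref{hessian} applies and the inflexion points of $\pi_c$ are precisely the non‑singular points of $\mathcal{Z}(f)$ lying on the Hessian $\mathscr{C}(\Phi^2(A),a_{012}^4)$.

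Next I would carry out the two iterations of the operator $\Phi$ (the second applied with the constant $a_{012}^2=1$) to obtain $\Phi^2(A)$, and hence the Hessian $\mathcal{Z}(h)$ for an explicit cubic $h$. The useful observation is that the $X$‑terms of $f$ and $h$ coincide, so that $f+h$ is a form in $Y,Z$ only; concretely $f+h=c^2Y^3+c^2Y^2Z+Z^3$. Thus a point of $\mathcal{Z}(f)\cap\mathcal{Z}(h)$ is a point of $\mathcal{Z}(f)$ annihilating $c^2Y^3+c^2Y^2Z+Z^3$. On the line $Y=0$ this forces $Z=0$ and gives only $(1,0,0)$, which one checks to be a singular point of $\mathcal{Z}(f)$ (all three partial derivatives vanish there) and hence not an inflexion point. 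For $Y\neq 0$ one may normalise $Y=1$ and is reduced to the equation $Z^3+c^2Z+c^2=0$. For each root $t\in\Fq$ of this cubic, the linear equation $f(X,1,t)=0$ has the unique solution $X=t(c^2+t+t^2)^{-1}$, because the coefficient $c^2+t+t^2$ is nonzero — this I would verify directly from the relation $t^3+c^2t+c^2=0$ — and the resulting point is automatically non‑singular, since $\partial f/\partial X=c^2+t+t^2\neq 0$ there. Hence the inflexion points of $\pi_c$ are in bijection with the $\Fq$‑roots of $Z^3+c^2Z+c^2$.

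Finally, since $c\neq 0$ the substitution $Z=c\theta$ turns $Z^3+c^2Z+c^2=0$ into $\theta^3+\theta+c^{-1}=0$. Applying Theorem~\ref{cubicequation} with $a=c^{-1}\in\Fq\setminus\{0\}$, and noting $\operatorname{Tr}(a^{-1})=\operatorname{Tr}(c)$, yields exactly three, one, or no solutions according to whether ($q\neq 4$, $\operatorname{Tr}(c)=\operatorname{Tr}(1)$ and $c^{-1}$ is admissible), ($\operatorname{Tr}(c)\neq\operatorname{Tr}(1)$), or ($\operatorname{Tr}(c)=\operatorname{Tr}(1)$ and $c^{-1}$ is not admissible), which is the asserted trichotomy. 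I expect the bookkeeping in the iterated computation of $\Phi^2(A)$, together with the delicate points that $(1,0,0)$ is singular and that no root of the reduced cubic kills the linear coefficient $c^2+t+t^2$, to be the main obstacle; everything else is routine verification.
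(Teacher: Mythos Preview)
Your proof is correct and follows essentially the same approach as the paper: both compute the cubic $f$, form the Hessian via Lemma~\ref{hessian}, observe that $f+h$ eliminates $X$ to give $Z^3+c^2Y^2Z+c^2Y^3=0$, and then reduce to $\theta^3+\theta+c^{-1}=0$ via the substitution $Z=c\theta$ before invoking Theorem~\ref{cubicequation}. You are in fact slightly more careful than the paper in explicitly disposing of the $Y=0$ case via the singularity of $(1,0,0)$ and in verifying that $c^2+t+t^2\neq 0$ for each root $t$ (so that a unique non-singular point on $\mathscr{C}$ arises from each $t$), points the paper leaves implicit.
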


\begin{proof}
Let $\mathscr{C}=\mathcal{Z}(f)$ be the cubic curve associated with $\pi_c$ defined by 
$$f=X(Z^2+YZ+c^2Y^2)+Y^2Z.$$ 
By Lemma \ref{hessian}, inflexion points of $\mathscr{C}$ correspond to non-singular points of $\mathscr{C}\cap\mathscr{C}''$, where $\mathscr{C}''=\mathcal{Z}(f'')$ and $$f''=X(Z^2+YZ+c^2Y^2)+Z^3+(1+c^2)Y^2Z+c^2Y^3.$$ 
The points in $\mathscr{C}\cap\mathscr{C}''$ therefore satisfy the equation:

 \begin{equation}\label{number} Z^3+c^2Y^2Z+c^2Y^3=0.\end{equation} 
The affine points $(X,1,Z)$ in $\pi_c\setminus \mathcal{Z}(Y)$ satisfy
 \begin{equation}Z^3+c^2Z+c^2=0.\end{equation} Let $\theta=c^{-1} Z$, then inflexion points of $\mathscr{C}$ correspond to solutions of \begin{equation}\label{theta} \theta^3+\theta +c^{-1}=0,\end{equation} where \eqref{theta} has three solutions if and only if $q\neq4$, $Tr(c)=Tr(1)$ and $c^{-1}$ is admissible, a unique solution if and only if  $Tr(c)\neq Tr(1)$, and no solution if and only if  $Tr(c)=Tr(1)$ and $c^{-1}$ is not admissible (see Lemma \ref{cubicequation}).
\end{proof}

\begin{Lemma}\label{inflexion}
Let $q=2^h$, $h>1$. There exist $c_{0}$ and $c_{1}$ in $\Fq\setminus\{0\}$ such that $\pi_{c_0}$ has no inflexion points and $\pi_{c_1}$ has a unique inflexion point. Moreover, if $h>2$, then there exists $c_3\in \Fq\setminus\{0\}$ such that $\pi_{c_3}$ has three inflexion points.
\end{Lemma}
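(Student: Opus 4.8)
The plan is to convert the statement about inflexion points into a statement about the number of solutions of a cubic, and then answer the three existence questions by a single fibre count on $\Fq$.

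First I would record the dictionary. Applying Theorem~\ref{cubicequation} with $a=c^{-1}$ and comparing with Lemma~\ref{pic}, the three cases match verbatim: a plane $\pi_c$ with $c\neq 0$ has exactly $k$ inflexion points, $k\in\{0,1,3\}$, if and only if the equation $\theta^3+\theta+c^{-1}=0$ has exactly $k$ solutions in $\Fq$. Hence it suffices to produce $b_0,b_1\in\Fq\setminus\{0\}$ such that $\theta^3+\theta+b_0$ has no root and $\theta^3+\theta+b_1$ has exactly one root, and, when $h>2$, a further $b_3\in\Fq\setminus\{0\}$ such that $\theta^3+\theta+b_3$ has three roots; then $c_0=b_0^{-1}$, $c_1=b_1^{-1}$, $c_3=b_3^{-1}$ satisfy the lemma. (For $c_1$ one can alternatively argue directly from Lemma~\ref{pic}: the set $\{c\in\Fq:\operatorname{Tr}(c)\neq\operatorname{Tr}(1)\}$ has size $q/2\geq 2$ and contains a nonzero element since $q\geq 4$.)

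Next I would count the fibres of the map $g:\Fq\to\Fq$, $g(x)=x^3+x$. One has $g^{-1}(0)=\{0,1\}$ because $x^3+x=x(x+1)^2$, and for $b\neq 0$ the polynomial $x^3+x+b$ is squarefree (its formal derivative is $(x+1)^2$, and $x=1$ is not a root since $b\neq 0$), so it has $0$, $1$ or $3$ distinct roots in $\Fq$ — never exactly two, as a cubic over $\Fq$ with two rational roots has a third. Writing $m_k$ for the number of $b\in\Fq\setminus\{0\}$ whose associated cubic has exactly $k$ roots, counting $|\Fq|=\sum_{b}|g^{-1}(b)|$ gives $2+m_1+3m_3=q$, and trivially $m_0+m_1+m_3=q-1$. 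The missing third relation is supplied by Theorem~\ref{cubicequation}: $m_1=\#\{b\neq 0:\operatorname{Tr}(b^{-1})\neq\operatorname{Tr}(1)\}$, and since $b\mapsto b^{-1}$ permutes $\Fq\setminus\{0\}$ this equals $\#\{c\neq 0:\operatorname{Tr}(c)\neq\operatorname{Tr}(1)\}$, which is $q/2$ if $h$ is even and $q/2-1$ if $h$ is odd.

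Solving the three relations then gives, for $h$ even (so $q\equiv 1\pmod 3$), $m_3=(q-4)/6$ and $m_0=(q-1)/3$, and for $h$ odd (so $q\equiv 2\pmod 3$), $m_3=(q-2)/6$ and $m_0=(q+1)/3$; the stated divisibilities hold precisely because of these congruences. Reading off positivity: for every $q=2^h$ with $h>1$ one has $m_1\geq q/2-1\geq 1$ and $m_0=(q\mp 1)/3\geq 1$, which yields $c_1$ and $c_0$; and for $h>2$, i.e. $q\geq 8$, one has $m_3=(q-2)/6\geq 1$ (if $h$ odd) or $m_3=(q-4)/6\geq 2$ (if $h$ even), which yields $c_3$. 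Note $m_3=0$ exactly when $q=4$, consistently with the hypothesis $h>2$ and with the exclusion $q\neq 4$ in Lemma~\ref{pic}. There is no serious obstacle here; the only points requiring care are obtaining the third linear relation (which forces the use of Berlekamp's count rather than pure counting) and tracking the parity of $h$ through $\operatorname{Tr}(1)$ so that the formulas for $m_0,m_3$ and the relevant divisibilities by $3$ and $6$ come out correctly, together with an explicit acknowledgement of the $q=4$ edge case.
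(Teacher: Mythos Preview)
Your proof is correct and follows essentially the same counting strategy as the paper. The paper's one-line argument directly cites Berlekamp's Lemma~1, which asserts that there are exactly $\lfloor (q-2)/6\rfloor$ admissible scalars in $\Fq\setminus\{0\}$ for $q\neq 4$, and combines this with the fact that $\operatorname{Tr}$ is $q/2$-to-$1$; you instead rederive the admissible count $m_3$ from scratch via the fibre count of $g(x)=x^3+x$, using the trace condition from Theorem~\ref{cubicequation} to pin down $m_1$ and then solving the resulting linear system. Your route is slightly more self-contained (it avoids importing the Berlekamp count as a black box) at the cost of a little extra bookkeeping with the parity of $h$, but the substance is the same: both arguments reduce to showing $m_0,m_1>0$ for $q\geq 4$ and $m_3>0$ for $q\geq 8$.
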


\begin{proof}
This is a consequence of having exactly $\lfloor \frac{q-2}{6} \rfloor$ admissible scalars in $\Fq\setminus\{0\}$, $q\neq 4$ \cite[Lemma 1]{berlekamp}, and by noting that $Tr$ is a $\frac{q}{2}$-to-$1$ map.
\end{proof}
\begin{Remark}\label{inflexionrmk}
Let $q=2^h$, $h>1$. Lemma \ref{inflexion} implies the existence of at least three $K$-orbits of planes of the form $\pi_c$, when $h>2$, and at least two $K$-orbits of planes of the form $\pi_c$, when $h=2$. In particular, we denote by
\begin{itemize}
\item $\Sigma_{12}$ the union of $K$-orbits of planes represented by $\pi_c$ where  $Tr(c)=1$ and $c^{-1}$ is not admissible if $h$ is odd.
\item $\Sigma_{13}$ the union of $K$-orbits of planes represented by $\pi_c$ where $Tr(c)=0$ and $c^{-1}$ is not admissible if $h$ is even.
\item $\Sigma_{14}$ the union of $K$-orbits of planes represented by $\pi_c$ where $h>2$, $Tr(c)=Tr(1)$ and $c^{-1}$ is admissible.
\end{itemize}
\end{Remark}

\begin{Lemma}\label{phi14}
For $q=2^h>4$, inflexion points of planes in $\Sigma_{14}$ are collinear.  Furthermore, there exists a one-to-one correspondence between planes in $\Sigma_{14}$ and lines in the $K$-orbit $o_{14}$.
\end{Lemma}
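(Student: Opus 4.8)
The plan is to prove the two assertions in turn: collinearity follows from the explicit description of inflexion points obtained in the proof of Lemma~\ref{pic}, and the bijection is then constructed concretely, with its inverse given by a ``cevian'' construction in $\PG(2,q)$.

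\textbf{Collinearity.} By the proof of Lemma~\ref{pic}, the inflexion points of the representative $\pi_c$ are precisely the points $(X,1,Z)$ of the associated cubic $\mathscr{C}(\pi_c)=\mathcal{Z}(f)$, $f=X(Z^2+YZ+c^2Y^2)+Y^2Z$, for which $Z$ is one of the three distinct roots $Z_1,Z_2,Z_3$ of $Z^3+c^2Z+c^2=0$ (distinct since $c\neq 0$); for such a root, $w_i:=Z_i^2+Z_i+c^2\neq0$ (as one checks from the defining equation) and $X_i=Z_i/w_i$. Using $Z_i^3=c^2Z_i+c^2$ I would verify directly that $(1+Z_i)w_i=Z_i$, i.e. $X_i=1+Z_i$, so each inflexion point lies on the line $\mathcal{Z}(X+Y+Z)$ of the parameter plane of $\pi_c$. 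Since ``the inflexion points are collinear'' is a projective property of the cubic of a plane, and by Remark~\ref{inflexionrmk} every plane in $\Sigma_{14}$ is $K$-equivalent to some $\pi_c$, collinearity holds throughout $\Sigma_{14}$.

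\textbf{The forward map.} Write $\ell(\pi)$ for the line of $\pi$ spanned by its three collinear inflexion points; this is a well-defined line of $\PG(5,q)$ contained in $\pi$, and $\pi\mapsto\ell(\pi)$ is $K$-equivariant. For $\pi_c$, the line $\ell(\pi_c)$ is the image of $\mathcal{Z}(X+Y+Z)$. Since $\pi_c\cap\mathcal{N}=\emptyset$ (immediate from the matrix form, or from the point-orbit distribution in Table~\ref{tableplanes}), the unique rank-$1$ point of $\pi_c$ is at parameter $(1,0,0)$, hence not on $\ell(\pi_c)$; and restricting $f$ to $X=Y+Z$ gives $Z^3+c^2Y^2Z+c^2Y^3\not\equiv0$, so $\mathcal{Z}(X+Y+Z)$ is not a component of $\mathscr{C}(\pi_c)$ and, by Bézout, $\ell(\pi_c)$ meets $\mathscr{C}(\pi_c)$ in exactly the three inflexion points. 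Therefore $\ell(\pi_c)$ has point-orbit distribution $[0,0,3,q-2]$, so $\ell(\pi_c)\in o_{14}$ by Table~\ref{tableoflines}, and by equivariance $\ell(\pi)\in o_{14}$ for all $\pi\in\Sigma_{14}$. As $o_{14}$ is a single $K$-orbit and $\{\ell(\pi):\pi\in\Sigma_{14}\}$ is a non-empty (Lemma~\ref{inflexion}) $K$-invariant subset, the map $\Sigma_{14}\to o_{14}$ is surjective.

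\textbf{The inverse.} For injectivity I would reconstruct $\pi$ from $\ell=\ell(\pi)$. Given $\ell\in o_{14}$, its three rank-$2$ points $R_1,R_2,R_3$ determine conics $\mathcal{C}_i=\mathcal{C}(R_i)$ whose preimages $l_i=\nu^{-1}(\mathcal{C}_i)$ are three non-concurrent lines (Lemma~\ref{linesspannedbyrank2}) forming a triangle with vertices $\nu^{-1}(\mathcal{C}_j\cap\mathcal{C}_k)=l_j\cap l_k$; moreover each $R_i$ lies on a unique tangent line of $\mathcal{C}_i$ (the line through $R_i$ and the nucleus of $\mathcal{C}_i$), say the tangent at $S_i=\nu(s_i)$, and $s_i\in l_i$ is distinct from the two vertices on $l_i$ by Lemma~\ref{linesspannedbyrank2}. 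I claim the three cevians joining each vertex not on $l_i$ to $s_i$ are concurrent, with common point $P_\ell$ lying off the triangle; then set $\Psi(\ell)=\langle\ell,\nu(P_\ell)\rangle$. For $\ell=\ell(\pi_c)$ one computes that $s_i$ has coordinates the (unique) square roots of the diagonal of $M_{R_i}$, that the vertex not on $l_i$ equals $(Z_i,Z_i+Z_i^2,c)$ after normalisation, and that the $3\times3$ determinant testing collinearity of $\nu^{-1}(Q_1)=(1,0,c)$, that vertex, and $s_i$ vanishes exactly because $Z_i^3+c^2Z_i+c^2=0$; hence $P_{\ell(\pi_c)}=(1,0,c)$ and $\Psi(\ell(\pi_c))=\pi_c$. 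Since $\Psi$ is $K$-equivariant and $o_{14}=K\cdot\ell(\pi_c)$, this computation (repeated for each admissible $c'$ with $\operatorname{Tr}(c')=\operatorname{Tr}(1)$) shows $\Psi$ maps $o_{14}$ into $\Sigma_{14}$ and that $\ell(\Psi(\ell))=\ell$ for all $\ell\in o_{14}$ and $\Psi(\ell(\pi))=\pi$ for all $\pi\in\Sigma_{14}$ (each reducing by equivariance to $\Psi(\ell(\pi_{c'}))=\pi_{c'}$). Thus $\pi\mapsto\ell(\pi)$ is the desired bijection.

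\textbf{Main obstacle.} The hard part is the concurrency of the three cevians and the identification of $P_\ell$ with $\nu^{-1}(Q_1)$: this requires computing the tangency points $S_i$ (which brings in square roots in $\Fq$) and the triangle vertices in terms of the roots $Z_i$, and then recognising the resulting collinearity condition as the defining equation $Z_i^3+c^2Z_i+c^2=0$. One must also dispatch the non-degeneracies ($s_i$ not a vertex, the three cevians pairwise distinct, $P_\ell$ off the triangle), which follow from Lemma~\ref{linesspannedbyrank2} together with $w_i\neq0$.
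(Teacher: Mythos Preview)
Your proposal is correct, and your approach is genuinely different from the paper's in both parts of the lemma.

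For collinearity, the paper invokes Berlekamp's explicit parametrisation of the roots of $\theta^3+\theta+c^{-1}=0$ in terms of a parameter $v\in\Fq\setminus\mathbb{F}_4$, then computes the coordinates of the three inflexion points and the dual coordinates of the line $L_v$ through them. Your argument---showing directly from $Z_i^3+c^2Z_i+c^2=0$ that $(1+Z_i)(Z_i^2+Z_i+c^2)=Z_i$, so that each inflexion point $(X_i,1,Z_i)$ lies on $\mathcal{Z}(X+Y+Z)$---is considerably cleaner and avoids the auxiliary parameter $v$ altogether.

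For the bijection, the paper proves injectivity by fixing the representative $L$ of $o_{14}$ (with rank-$2$ points $E_1,E_2,E_3$), parametrising all candidate planes as $\pi_{b,c}=\langle L,\nu(1,b,c)\rangle$, computing the Hessian $\mathscr{C}_{b,c}''$, and imposing that $E_1,E_2,E_3$ lie on it; this forces $b=c=1$. Your route instead builds an explicit $K$-equivariant inverse $\Psi$ by a cevian construction in $\PG(2,q)$: from $\ell\in o_{14}$ read off the triangle of lines $l_i=\nu^{-1}(\mathcal{C}(R_i))$ and the tangency points $s_i$, and recover $\nu^{-1}(Q_1)$ as the common point of the three cevians. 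Your observation that the diagonal of $M_{R_i}$ yields $s_i$ (since in characteristic~$2$ the symmetric part $st^T+ts^T$ has zero diagonal) is a neat trick. The trade-off is that the paper's Hessian computation, while tedious, is entirely mechanical, whereas your cevian verification still requires computing the null spaces of the $M_{R_i}$ to locate the triangle vertices and then checking a $3\times3$ collinearity determinant---work you correctly flag as the main obstacle but do not fully carry out here. The paper's argument gives less geometric insight but leaves nothing implicit; yours supplies an explicit reconstruction of $\pi$ from $\ell(\pi)$ which the paper does not provide.
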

\begin{proof}
Consider $\pi_c$ as the plane defined in Section \ref{casedii}, where $c$ is an admissible scalar in $\Fq\setminus\{0\}$. By Lemma 1 in \cite{berlekamp}, inflexion points of $\pi_c$ are the points parametrized by $(\frac{z_i}{z_i^2+z_i+c^2},1,z_i)$ where $$z_1=(1+v+v^{-1})^2,\:  z_2=\frac{v(1+v+v^{-1})^2}{v+v^{-1}},\;z_3=\frac{v^{-1}(1+v+v^{-1})^2}{v+v^{-1}},$$ and $v \in \Fq \setminus\mathbb{F}_4$. In particular, these points are collinear lying on the line $L_v$ with parametrized dual coordinates  $$[(v+v^{-1})(1+v+v^{-1})^2, (v+v^{-1})(1+v+v^{-1})^2, \frac{(v+v^{-1})^2+(v+v^{-1})^4+(v^3+v^{-3})^2}{(v+v^{-1})^4}].$$ We call $L_v$ an {\it inflexion line}. By Lemma \ref{linesspannedbyrank2} the line $L_v$ is of type $o_{14}$. First we prove that no two distinct planes in $\Sigma_{14}$ have the same inflexion line $L$. Without loss of generality, we may start by fixing $L$ as the representative of $o_{14}$ in \cite[Table 2]{lines}. More precisely, let $E_1=(1,0,0,1,0,0)$, $E_2=(0,0,0,1,0,1)$, and $E_3=(1,0,0,0,0,1)$ be the three inflexion points on $L$ parametrised by $(0,1,0)$, $(0,0,1)$ and $(0,1,1)$ respectively, and consider $Q_{a,b,c}=\nu(a,b,c)$ as a point on $\mathcal{V}(\Fq)$. If $\pi_{a,b,c}=\langle L,Q_{a,b,c}\rangle$ is a plane of type $\Sigma_{14}$, then $\langle Q_{a,b,c}, E_i\rangle\in o_{8,1}$, $1\leq i\leq 3$. This implies that $a,b,c \neq 0$. Therefore, we may assume without loss of generality that $a=1$, $Q_{a,b,c}=Q_{b,c}$ and $\pi_{a,b,c}=\pi_{b,c}$, where $$\pi_{b,c}=\begin{bmatrix}   x+y& bx&cx\\bx&b^2x+y+z&bcx\\cx&bcx&c^2x+z\end{bmatrix}.$$ The cubic curve $\mathscr{C}_{b,c}$ associated with $\pi_{b,c}$ is defined by 
\begin{equation}
XZ^2+c^2XY^2+Y^2Z+YZ^2+(1+b^2+c^2)XYZ=0.\end{equation}
If $1+b+c=0$, then $\pi_{b,c}$ intersects the nucleus plane $\mathcal{N}$ 
 in a unique point parametrised by $(1,1,1+b^2)$, a contradiction as planes in $\Sigma_{14}$ have no intersection with the nucleus plane. Therefore, we may assume that $1+b+c\neq0$. By Lemma \ref{hessian}, inflexion points of $\pi_{b,c}$ are non-singular points of $\mathscr{C}_{b,c}\cap\mathscr{C}_{b,c}''$, where $\mathscr{C}_{b,c}''=\mathcal{Z}(h_{b,c})$, $\alpha=(1+b^2+c^2)$ and \begin{equation}\begin{split}h_{b,c}= &c^2\alpha^5XY^2+\alpha^5XZ^2+c^2(1+b^2)\alpha Y^3+\alpha((1+b^2)+\alpha^3(b^2+c^2))YZ^2\\ & +\alpha(c^2(b^2+c^2)+\alpha^3(1+b^2))Y^2Z+(b^2+c^2)\alpha Z^3.\end{split}\end{equation}
 Imposing the conditions: $E_i \in \mathscr{C}_{b,c}''$, $1\leq i\leq 3$, implies that 
\begin{align}
c^2(1+b^2)\alpha+\alpha((1+b^2)+\alpha^3(b^2+c^2))+\alpha(c^2(b^2+c^2)+\alpha^3(1+b^2))+(b^2+c^2)\alpha&=0,
\end{align}
and \begin{align}c^2(1+b^2)\alpha&=(b^2+c^2)\alpha=0\end{align}.
  As $\alpha,c\neq0$, we get $b=c=1$. 
Therefore, every line in $o_{14}$ is the inflexion line of a unique plane in $\Sigma_{14}$, and thus we obtain a one-to-one correspondence between the set of planes in $\Sigma_{14}$ and the set of lines in $o_{14}$ being their inflexion lines.
\end{proof}

For future reference we extract the one-to-one correspondence between lines of type $o_{14}$ and planes of type $\Sigma_{14}$ which was established in the proof of Lemma \ref{phi14}.
\begin{Corollary}\label{cor:bijection}
A line $L$ of type $o_{14}$ in $\PG(5,q)$, $q>4$ and $q$ even, is contained in a unique plane (of type $\Sigma_{14}$) which meets the Veronese variety $\cV(\bF_q)$ in exactly one point, and which intersects the secant variety of $\cV(\bF_q)$ in a cubic curve with three $\bF_q$-rational inflexion points which lie on $L$.
\end{Corollary}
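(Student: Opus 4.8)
The plan is to read the statement off from the one-to-one correspondence between planes of type $\Sigma_{14}$ and lines of type $o_{14}$ established (for $q$ even, $q>4$) in the proof of Lemma~\ref{phi14}, under which a plane corresponds to its inflexion line. The only additional ingredient is the trivial observation that a $\Sigma_{14}$-plane contains its own inflexion line: the three inflexion points of its associated cubic are distinct, collinear (Lemma~\ref{phi14}) and are points of the plane, so the line they span lies in the plane.

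For existence, let $L$ be a line of type $o_{14}$. By Lemma~\ref{phi14} there is a plane $\pi$ of type $\Sigma_{14}$ whose inflexion line is $L$, and by the observation above $L\subseteq\pi$. Since $\pi\in\Sigma_{14}$, its point-orbit distribution in Table~\ref{tableplanes} begins with $r_1=1$, so $\pi$ meets $\cV(\bF_q)$ in exactly one point. The secant variety of $\cV(\bF_q)$ is the cubic hypersurface $\cZ(\det M)$ of rank-$\leqslant 2$ points; as $\pi$ contains rank-$3$ points it is not contained in this hypersurface, so it meets $\pi$ precisely in the planar cubic $\mathscr{C}(\pi)$. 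Finally, $\pi$ is $K$-equivalent to a representative $\pi_c$ with $\operatorname{Tr}(c)=\operatorname{Tr}(1)$ and $c^{-1}$ admissible, so $\mathscr{C}(\pi)$ has exactly three $\bF_q$-rational inflexion points by Lemma~\ref{pic}, and these lie on $L$ by construction. Hence $\pi$ has all the asserted properties.

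For uniqueness, suppose $\pi'$ is a plane with the same properties: $L\subseteq\pi'$, $\pi'\in\Sigma_{14}$, and the three inflexion points of $\mathscr{C}(\pi')$ lie on $L$. Being distinct and collinear, these three points span $L$, so $L$ is the inflexion line of $\pi'$; since distinct $\Sigma_{14}$-planes have distinct inflexion lines (the injectivity in Lemma~\ref{phi14}), $\pi'=\pi$. If one wants the formally stronger statement that $L$ lies in only one $\Sigma_{14}$-plane, one further checks that any $\Sigma_{14}$-plane through $L$ necessarily has $L$ as inflexion line: normalising $L$ to the standard $o_{14}$-representative with rank-$2$ points $E_1,E_2,E_3$ and writing the unique rank-$1$ point of the plane as $\nu(1,b,c)$ — here $b,c\neq 0$ is forced by $\langle\nu(1,b,c),E_i\rangle\in o_{8,1}$, and $1+b+c\neq0$ by trivial intersection with $\mathcal{N}$ — the Hessian computation of the proof of Lemma~\ref{phi14} again yields $b=c=1$. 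I do not expect a serious obstacle: the substance lies entirely in Lemma~\ref{phi14}, and the corollary merely rephrases its bijection for a fixed line $L$; the one point demanding a little care is the equivalence, in the last remark, between a $\Sigma_{14}$-plane \emph{through} $L$ and a $\Sigma_{14}$-plane \emph{with inflexion line} $L$, which reduces to the identity $b=c=1$ already obtained there.
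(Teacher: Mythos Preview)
Your proof is correct and takes the same approach as the paper, whose proof is literally ``See proof of Lemma~\ref{phi14}''; you have simply unpacked that reference carefully. One small caveat concerns your final paragraph on the ``formally stronger statement'' that $L$ lies in only one $\Sigma_{14}$-plane: the Hessian computation in Lemma~\ref{phi14} explicitly \emph{imposes} the condition $E_i\in\mathscr{C}_{b,c}''$ (i.e.\ that the $E_i$ are inflexion points) before deducing $b=c=1$, so that computation alone does not yield the stronger conclusion without this hypothesis --- but since the corollary as stated requires the three inflexion points to lie on $L$, the stronger statement is not needed and your main argument is unaffected.
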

\begin{proof}
See proof of Lemma \ref{phi14}.
\end{proof}

\begin{Lemma}\label{sigma14unique}
For $q=2^h>4$, the planes in $\Sigma_{14}$ form one $K$-orbit.
\end{Lemma}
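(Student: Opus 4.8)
The plan is to deduce this from the $K$-equivariant bijection between $\Sigma_{14}$ and the line orbit $o_{14}$, which is essentially already contained in Lemma~\ref{phi14} and Corollary~\ref{cor:bijection}, combined with the fact that $o_{14}$ is a single $K$-orbit (Theorem~\ref{lines}). First I would record that $\Sigma_{14}$ is nonempty for $q=2^h>4$: by Lemma~\ref{inflexion} there is $c_3\in\Fq\setminus\{0\}$ for which $\pi_{c_3}$ has three inflexion points, and by Remark~\ref{inflexionrmk} such a plane belongs to $\Sigma_{14}$.

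The key observation to establish is that the assignment $\pi\mapsto L(\pi)$, sending a plane $\pi\in\Sigma_{14}$ to its inflexion line (the line containing its three inflexion points, shown to exist and to be of type $o_{14}$ in Lemma~\ref{phi14}), is $K$-equivariant. Indeed, inflexion points of a planar cubic are defined intrinsically in Definition~\ref{defofinflexion} as points of the curve whose tangent meets it in a triple intersection, a condition manifestly invariant under projectivities of $\PG(2,q)$; and the map $\pi\mapsto\mathscr{C}(\pi)$ intertwines the $K$-action on planes of $\PG(5,q)$ with the $\PGL(3,q)$-action on planar cubics, since $g=\alpha(\phi_A)$ sends $M_P$ to $AM_PA^T$ and $\det(AM_PA^T)=\det(A)^2\det(M_P)$. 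Hence for every $g\in K$ one has $L(g\,\pi)=g\bigl(L(\pi)\bigr)$, and in particular $L(g\,\pi)$ is again of type $o_{14}$.

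Granting this, the transitivity follows immediately. Let $\pi,\pi'\in\Sigma_{14}$ and let $L=L(\pi)$, $L'=L(\pi')$ be their inflexion lines, both of type $o_{14}$. By Theorem~\ref{lines} there is $g\in K$ with $g(L)=L'$. Since $\Sigma_{14}$ is $K$-invariant (being by definition a union of $K$-orbits), $g\,\pi$ is again a plane in $\Sigma_{14}$, and its inflexion line is $g(L)=L'$. But by the uniqueness statement of Lemma~\ref{phi14} (equivalently Corollary~\ref{cor:bijection}) there is exactly one plane in $\Sigma_{14}$ whose inflexion line is $L'$, namely $\pi'$; hence $g\,\pi=\pi'$. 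As $\pi,\pi'$ were arbitrary, $\Sigma_{14}$ is a single $K$-orbit.

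I do not expect a genuine obstacle here: the substantive work — producing a plane in $\Sigma_{14}$ from a given line of $o_{14}$ and showing it is unique — was already carried out in Lemma~\ref{phi14}, and the equivariance of the inflexion-line map is forced by the intrinsic Definition~\ref{defofinflexion} (the Hessian/Glynn description of Lemma~\ref{hessian} being only a computational device, not needed for equivariance). The one point I would take care to spell out is precisely that equivariance, so that the claim ``$g$ carries the inflexion line of $\pi$ to the inflexion line of $g\,\pi$'' is not left implicit; everything else is a direct consequence of results already established.
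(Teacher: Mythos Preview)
Your argument is correct and complete: the equivariance of $\pi\mapsto L(\pi)$ together with the bijectivity from Lemma~\ref{phi14} and the transitivity of $K$ on $o_{14}$ immediately forces transitivity on $\Sigma_{14}$.

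The paper takes a different route. Rather than arguing transitivity directly, it fixes the specific plane $\pi_{1,1}$ (the one shown in Lemma~\ref{phi14} to sit over the standard $o_{14}$-representative), computes its stabiliser explicitly as $K_{\pi_{1,1}}=K_Q\cap K_L$, observes that $K_L\cong\operatorname{Sym}_3$ consists of the six permutation matrices which all fix $Q=\nu(1,1,1)$, hence $K_{\pi_{1,1}}\cong\operatorname{Sym}_3$, and then concludes by counting: the $K$-orbit of $\pi_{1,1}$ has size $|K|/6$, while $|\Sigma_{14}|=|o_{14}|=|K|/6$ by the bijection of Lemma~\ref{phi14}, so the orbit exhausts $\Sigma_{14}$. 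Your approach is cleaner in that it bypasses the stabiliser computation entirely and makes the role of the equivariant bijection transparent; the paper's approach, on the other hand, yields the stabiliser $K_{\pi}\cong\operatorname{Sym}_3$ as a by-product, which is independently useful (and is the same style of argument the paper reuses for $\Sigma_{12}$ and $\Sigma_{13}$ in Lemma~\ref{q>4even}).
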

\begin{proof}
 Consider the plane  $\pi_{1,1}$ defined in Lemma \ref{phi14}. The stabiliser of $\pi_{1,1}$ in $K$, denoted by $K_{\pi_{1,1}}$, is the intersection of the two subgroups of $K$  stabilising the unique rank-1 point $Q$ and the inflexion line $L$, i.e,  
  $K_{\pi_{1,1}}=K_Q\cap K_L$. By \cite{lines}, we have $K_L\cong \operatorname{Sym}_3$, being the group represented by the six $3\times 3$ permutation matrices. Moreover, a matrix $g=(g_{ij})\in \GL(3,q)$ stabilises $Q$ if and only if $g_{11}+g_{12}+g_{13}=g_{21}+g_{22}+g_{23}=g_{31}+g_{32}+g_{33}.$ Therefore, $K_Q\cong E_q^2\rtimes\GL(2,q)$, and thus $K_{\pi_{1,1}}\cong \operatorname{Sym}_3$. Additionally, since the set $\Sigma_{14}$ has $|K|/6$ planes by Lemma \ref{phi14}, it follows that $\Sigma_{14}$ is equal to the $K$-orbit of $\pi_{1,1}$ in $\PG(5,q)$. Therefore, planes in $\Sigma_{14}$ define a unique $K$-orbit represented by $\pi_{1,1}$.
\end{proof}
By the above, we can represent the orbit $\Sigma_{14}$ by $\pi_{1,1}$
$$\Sigma_{14}:\begin{bmatrix}   x+y& x&x\\x&x+y+z&x\\x&x&x+z\end{bmatrix}.$$

The next lemma implies that we have two orbits
  $$
  \Sigma_{12}: \begin{bmatrix} x&y&cx\\y&y+z&.\\ cx&.&c^2x+z \end{bmatrix},
  $$
  with $Tr(c)=1$ and $c^{-1}$ is not admissible if $h$ is odd,
  and 
$$
\Sigma_{13}: \begin{bmatrix} x&y&cx\\y&y+z&.\\ cx&.&c^2x+z \end{bmatrix},
  $$
with $Tr(c)=0$ and $c^{-1}$ is not admissible if $h$ is even.

\begin{Remark}
In what follows, the notations $o_i(\bF_{q^j})$ and $\Sigma_i(\bF_{q^j})$ are used to denote orbits of lines and planes considered over extension fields $\bF_{q^j}$ of $\bF_q$. Furthermore, for a line  $L$ and a plane $\pi$ in $\PG(5,q)$, we denote by $L(\bF_{q^s})$ and $\pi(\bF_{q^s})$ their extensions over $\bF_{q^s}$, $s\in\{2,3\}$.
\end{Remark}

\begin{Lemma}\label{q>4even}
For $q>2$, the planes in $\Sigma_{12}$ form one $K$-orbit, and planes in $\Sigma_{13}$ form one $K$-orbit. 
\end{Lemma}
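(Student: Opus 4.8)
The plan is to reduce both claims, by extension of scalars, to the fact that $\Sigma_{14}$ is a single $K$-orbit (Lemma~\ref{sigma14unique}); this is the reason for the notation $\Sigma_i(\bF_{q^j})$ introduced above. Write $q=2^h$, so $\operatorname{Tr}_{\Fq/\bF_2}(1)=h\bmod 2$. By Lemma~\ref{pic}, for a plane $\pi_c$ of type $\Sigma_{12}$ or $\Sigma_{13}$ the cubic $g_c(\theta):=\theta^3+\theta+c^{-1}$ is either irreducible over $\Fq$ (the subcases $\Sigma_{12}$ with $h$ odd and $\Sigma_{13}$ with $h$ even, where $\operatorname{Tr}(c)=\operatorname{Tr}(1)$ and $c^{-1}$ is not admissible) or has exactly one root in $\Fq$ (the subcases $\Sigma_{12}$ with $h$ even and $\Sigma_{13}$ with $h$ odd, where $\operatorname{Tr}(c)\neq\operatorname{Tr}(1)$). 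Set $s=3$ in the first situation and $s=2$ in the second. Since the derivative of $g_c$ is $(\theta+1)^2$ and $\theta=1$ is not a root (as $c\neq0$), $g_c$ is separable, hence splits over $\bF_{q^s}$ into three distinct linear factors. A short computation with the tower of trace maps gives $\operatorname{Tr}_{\bF_{q^s}/\bF_2}(c)=\operatorname{Tr}_{\bF_{q^s}/\bF_2}(1)$, and since $q^s>4$ Theorem~\ref{cubicequation} then forces $c^{-1}$ to be admissible over $\bF_{q^s}$; checking the conditions of Table~\ref{tableplanes} over $\bF_{q^s}$, this means that the matrix $\pi_c$, read over $\bF_{q^s}$, represents a plane of type $\Sigma_{14}(\bF_{q^s})$, which in particular has three inflexion points.

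I would then extract a canonical $\Fq$-rational line from $\pi_c$. By Lemma~\ref{phi14}, the three inflexion points of $\pi_c(\bF_{q^s})$ are collinear, spanning a line $L_c^{(s)}$ of type $o_{14}(\bF_{q^s})$, and $\pi_c(\bF_{q^s})=\langle Q_c,L_c^{(s)}\rangle$, where $Q_c=\nu(1,0,c)$ is the unique rank-$1$ point of $\pi_c$ (this follows from inspecting the $2\times2$ minors of the representative, and in particular $Q_c$ is $\Fq$-rational). The Frobenius $x\mapsto x^q$ stabilises $\pi_c(\bF_{q^s})$ and permutes its inflexion points (these being the images of the roots of $g_c$), so it fixes $L_c^{(s)}$ setwise; hence $L_c^{(s)}$ descends to an $\Fq$-rational line $L_c\subseteq\pi_c$, and $\pi_c=\langle Q_c,L_c\rangle$ since $Q_c\notin L_c$. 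When $s=3$ the inflexion points form a single Galois orbit, so $L_c$ carries no rank-$\le2$ $\Fq$-point and is of type $o_{17}$; when $s=2$ exactly one inflexion point is $\Fq$-rational, a rank-$2$ point outside $\mathcal{N}$ (as $\Sigma_{14}$-planes avoid $\mathcal{N}$), so $L_c$ has point-orbit distribution $[0,0,1,q]$ and is of type $o_{15}$ or $o_{16,2}$, which one decides by a direct computation. Furthermore $L_c$ is the unique line of that type contained in $\pi_c$: any such line $M$ meets the cubic $\mathscr{C}(\pi_c)(\bF_{q^s})$ in a degree-$3$ $\Fq$-rational $0$-cycle which avoids the node $Q_c$ and whose support contains no $\Fq$-rational point other than the one already accounted for, hence in three distinct $\bF_{q^s}$-rational rank-$2$ points; thus $M(\bF_{q^s})$ is of type $o_{14}(\bF_{q^s})$, and the uniqueness of the $o_{14}$-line in a $\Sigma_{14}(\bF_{q^s})$-plane (Lemma~\ref{phi14}) gives $M(\bF_{q^s})=L_c^{(s)}$, i.e. $M=L_c$.

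The argument now parallels Lemma~\ref{sigma14unique}. Since $Q_c$ and $L_c$ are canonical, $K_{\pi_c}=K_{Q_c}\cap K_{L_c}$. As $K$ is transitive on each line orbit of Table~\ref{tableoflines}, every plane of type $\Sigma_{12}$ (resp. $\Sigma_{13}$) is $K$-equivalent to one containing a fixed line $L$ of type $o_{17}$ (resp. $o_{15}$ or $o_{16,2}$); and sending a plane to its unique rank-$1$ point identifies the planes of that type through $L$ with the rank-$1$ points $Q\notin L$ for which $\langle Q,L\rangle$ has the required type, equivariantly for $K_L$. It remains to prove that $K_L$ acts transitively on this set of rank-$1$ points. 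Using $K_{Q_c}\cong E_q^2\rtimes\GL(2,q)$ together with the structure of $K_L$ recorded in \cite{lines}, I would take the representative of the relevant line orbit from \cite[Table 2]{lines}, write a candidate point $Q=\nu(a,b,c')$ in coordinates, impose the conditions isolating $\Sigma_{12}$ (resp. $\Sigma_{13}$) on $(a,b,c')$, and show that the solutions form a single $K_L$-orbit --- exactly as the proof of Lemma~\ref{phi14} does for $\Sigma_{14}$ and $o_{14}$.

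I expect this last step to be the main obstacle. It is a genuine descent problem: $K_{\pi_c}$ is disconnected (of the same flavour as the $\operatorname{Sym}_3$ appearing in Lemma~\ref{sigma14unique}) and $K_L$ may be strictly larger over $\bF_{q^s}$ than over $\Fq$, so single-orbitness over the extension does not by itself descend and has to be verified directly over $\Fq$ through the explicit stabiliser computation above. The remaining care is with small fields --- in particular $q=4$, where one must keep $\Sigma_{14}'$ separate and check that $q^s\in\{16,64\}$ still makes $\Sigma_{14}(\bF_{q^s})$ a single orbit --- and with pinning down the precise type of $L_c$ in the $s=2$ case.
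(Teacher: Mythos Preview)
Your overall strategy---extend $\pi_c$ to $\bF_{q^s}$, land in $\Sigma_{14}(\bF_{q^s})$, use the inflexion line from Lemma~\ref{phi14}/Corollary~\ref{cor:bijection}, and descend---is exactly the route the paper takes. The difference is in the endgame, and there you are making your life harder than necessary.

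The paper does not attempt the transitivity computation you flag as the ``main obstacle''. Instead it proves the stronger equality $K_{\pi_c}=K_{L_c}$ (not merely $K_{\pi_c}=K_{Q_c}\cap K_{L_c}$), using the very same corollary you already invoked: if $g\in K_{L_c}$, then over $\bF_{q^s}$ the element $g$ fixes $L_c(\bF_{q^s})\in o_{14}(\bF_{q^s})$; by Corollary~\ref{cor:bijection} there is a unique $\Sigma_{14}(\bF_{q^s})$-plane through that line, namely $\pi_c(\bF_{q^s})$, so $g$ fixes it and in particular $Q_c^{\,g}=Q_c$. In your own language: the set of rank-$1$ points $Q$ with $\langle Q,L\rangle$ of the required type has exactly one element (by the same uniqueness over $\bF_{q^s}$), so the transitivity you were dreading is vacuous. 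Combined with the injection $\pi\mapsto L_c$ you already established, this gives $|K$-orbit of $\pi_c|=|K|/|K_{L_c}|\ge |\Sigma_{12}|$ (resp.\ $|\Sigma_{13}|$), finishing the proof with no explicit matrix work.

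For the loose end in the $s=2$ case: the paper excludes $o_{16,2}$ by noting that the representative of $o_{16,2}$ in \cite[Table~2]{lines}, namely $\langle(0,0,1,1,0,0),(0,0,0,0,1,1)\rangle$, remains of type $o_{16,2}$ over $\bF_{q^2}$ rather than becoming $o_{14}$; hence $L_c$ cannot be of that type and must lie in $o_{15}$. Your worry about $q=4$ is harmless, since $q^s\in\{16,64\}>4$ in all cases.
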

\begin{proof}
 
Recall that $\Sigma_{12}(\Fq)$ consists of all planes $\pi_{c_1}$, where $c_1\neq 0$, $Tr(c_1)=1$, and $\Sigma_{13}(\Fq)$ consists of all planes $\pi_{c_0}$, where $Tr(c_0)=0$ ($c_0\neq 0$). If $h$ is even, then $\pi_{c_1}$ has a unique inflexion point while $\pi_{c_0}$ has none. If $h$ is odd, then it is the other way around. Since the proof is essentially the same in both cases, we only give it for $h$ even.

By expanding $\pi_{c_1}$ to $\mathbb{F}_{q^2}$ and $\pi_{c_0}$ to $\mathbb{F}_{q^3}$, we obtain two planes $\pi_{c_1}(\mathbb{F}_{q^2})\subset\PG(5,q^2)$ and  $\pi_{c_0}(\mathbb{F}_{q^3})\subset \PG(5,q^3)$ belonging to the orbit $\Sigma_{14}(\bF_{q^s})$, $s\in\{2,3\}$. By Corollary \ref{cor:bijection}, each plane is uniquely determined by an inflexion line belonging to the orbit $o_{14}(\bF_{q^s})$, $s\in\{2,3\}$, say $L_1(\mathbb{F}_{q^2})$ and $L_0(\mathbb{F}_{q^3})$.
Let $\sigma_1$ (resp. $\sigma_0$) be the collineation of $\PG(5,q^2)$ (resp. $\PG(5,q^3)$) induced by the Frobenius automorphism $x\mapsto x^q$ of $\mathbb{F}_{q^2}$ (resp. $\mathbb{F}_{q^3}$).  
Since $\pi_{c_1}(\mathbb{F}_{q^2})$ has a unique $\Fq$-rational inflexion point and two $\mathbb{F}_{q^2}$-conjugate inflexion points, and $\pi_{c_0}(\mathbb{F}_{q^3})$ has three $\mathbb{F}_{q^3}$-conjugate inflexion points, it follows that $L_1=\pi_{c_1}\cap L_1(\mathbb{F}_{q^2})\in o_{15}(\bF_{q})$ and $L_0=\pi_{c_0}\cap L_0(\mathbb{F}_{q^3})\in o_{17}(\bF_{q})$. Note that, $L_1$ cannot belong to the orbit $o_{16,2}(\bF_{q})$ since the representative of this orbit in \cite[Table 2]{lines} is spanned by $(0,0,1,1,0,0)$ and $(0,0,0,0,1,1)$, which, over $\mathbb{F}_{q^2}$, generate a line belonging to $o_{16,2}(\bF_{q^2})$. Therefore, $L_1$ and $L_0$ are uniquely determined in $\pi_{c_1}$ and $\pi_{c_0}$, respectively. Moreover, by Corollary \ref{cor:bijection}, these lines uniquely determine the planes $\pi_{c_1}$ and $\pi_{c_0}$, since their respective extensions define a unique inflexion line in $o_{14}(\bF_{q^s})$, $s\in\{2,3\}$. 
Hence, there exists a one-to-one correspondence between planes in $\Sigma_{12}$ (resp. $\Sigma_{13}$) and lines in $o_{15}$ (resp. $o_{17}$). 
Using the stabiliser of these lines as computed in \cite{lines}, we thus obtain  $|K|/2$ planes in $\Sigma_{12}$ and $|K|/3$ planes in $\Sigma_{13}$. Let $K_{\pi_{c_i}}$ and $K_{L_{i}}$ be the stabilisers in $K$ of $\pi_{c_i}$ and $L_{i}$ respectively, $i\in\{0,1\}$. 

In order to show that the planes in $\Sigma_{12}$ form one $K$-orbit, it suffices to show that $K_{\pi_{c_1}}=K_{L_{12}}$. Since an element of $K$ fixing a plane must fix the cubic defined by it, clearly $K_{\pi_{c_1}}\leq K_{L_{12}}$. For the converse containment, consider an element $g\in K_{L_{12}}$. As before, we denote the unique point of rank $1$ contained in $\pi_{c_1}$ by $Q$. It suffices to show that $Q^g=Q$. Both planes, $\pi(\bF_{q^2})$ and $\pi^g(\bF_{q^2})$ are planes of $\PG(5,q^2)$ which belongs to the orbit $\Sigma_{14}(\bF_{q^2})$. The plane $\pi(\bF_{q^2})$
defines a cubic whose inflexion points lie on the line $L_{12}(\bF_{q^2})$, and the plane $\pi^g(\bF_{q^2})$ defines a cubic whose inflexion points lie on the line
$L_{12}^g(\bF_{q^2})=L_{12}(\bF_{q^2})$. By Corollary \ref{cor:bijection}, the plane $\pi^g(\bF_{q^2})=\pi(\bF_{q^2})$, and in particular $Q^g=Q$. This implies that the planes in $\Sigma_{12}$ form a single $K$-orbit.

Since the proof for $\Sigma_{13}$ is completely analogous, we do not include it here.
\end{proof}

We extract the following interesting facts from the proof of the previous lemma.

\begin{Corollary}\label{bijection_12_13}
(i) A line $L$ of type $o_{15}$ in $\PG(5,q)$, $q>2$ and $q$ even,  is contained in a unique plane which meets the Veronese variety $\cV(\bF_q)$ in exactly one point, and which intersects the secant variety $\cV(\bF_{q^2})$ in a cubic with three $\bF_{q^2}$-rational inflexion points which lie on $L(\bF_{q^2})$.

(ii) A line $L$ of type $o_{17}$ in $\PG(5,q)$, $q>2$ and $q$ even,  is contained in a unique plane which meets the Veronese variety $\cV(\bF_q)$ in exactly one point, and which intersects the secant variety $\cV(\bF_{q^3})$ in a cubic with three $\bF_{q^3}$-rational inflexion points which lie on $L(\bF_{q^3})$.
\end{Corollary}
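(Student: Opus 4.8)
The plan is to deduce both statements from Corollary~\ref{cor:bijection} by a Galois descent argument; the proofs of (i) and (ii) are identical after replacing $o_{15}$, $\bF_{q^2}$ and the Frobenius $x\mapsto x^q$ of $\bF_{q^2}$ by $o_{17}$, $\bF_{q^3}$ and the Frobenius $x\mapsto x^q$ of $\bF_{q^3}$, so I describe only (i). Let $L$ be a line of type $o_{15}$ in $\PG(5,q)$. Since $o_{15}$ is a single $K$-orbit and the $K$-action commutes with field extension, the computation already carried out for the corresponding line in the proof of Lemma~\ref{q>4even} shows that $L(\bF_{q^2})$ is a line of type $o_{14}(\bF_{q^2})$ in $\PG(5,q^2)$. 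Concretely, the binary cubic cutting out the rank-$\le 2$ locus on $L$ has the unique rank-$2$ point of $L$ as its only $\bF_q$-rational root, hence splits over $\bF_{q^2}$ into three distinct points, none of which has rank $1$ because $\cV$ is a cap and none of which lies in the nucleus plane (else $L$ would lie in $\mathcal N$), so $L(\bF_{q^2})$ has point-orbit distribution $[0,0,3,q^2-2]$, i.e.\ $L(\bF_{q^2})\in o_{14}(\bF_{q^2})$.

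By Corollary~\ref{cor:bijection} applied over $\bF_{q^2}$, there is a unique plane $\Pi\subset\PG(5,q^2)$ containing $L(\bF_{q^2})$ that meets $\cV(\bF_{q^2})$ in exactly one point $Q$ and intersects the secant variety of $\cV(\bF_{q^2})$ in a cubic with three $\bF_{q^2}$-rational inflexion points lying on $L(\bF_{q^2})$. Let $\sigma$ be the collineation of $\PG(5,q^2)$ induced by $x\mapsto x^q$. Since $\sigma$ preserves the rank of points, fixes $\cV(\bF_{q^2})$, sends cubics to cubics and inflexion points to inflexion points, and fixes $L(\bF_{q^2})$, the plane $\Pi^{\sigma}$ has all the properties that characterise $\Pi$ in Corollary~\ref{cor:bijection}; by uniqueness, $\Pi^{\sigma}=\Pi$, so $\Pi$ descends to a plane $\pi\subset\PG(5,q)$ with $\pi(\bF_{q^2})=\Pi$ and $L\subset\pi$. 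Moreover $Q^{\sigma}=Q$ by uniqueness of $Q$ on $\Pi$, so $Q\in\cV(\bF_q)$, and $\pi$ has no further rank-$1$ point, since such a point would be a second point of $\cV(\bF_{q^2})$ on $\Pi$. Thus $\pi$ is a plane of $\PG(5,q)$ through $L$ meeting $\cV(\bF_q)$ in exactly one point, and $\pi(\bF_{q^2})=\Pi$ has the stated inflexion configuration; this settles existence.

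For uniqueness, suppose $\pi'\subset\PG(5,q)$ also contains $L$, meets $\cV(\bF_q)$ in exactly one point, and $\pi'(\bF_{q^2})$ meets the secant variety of $\cV(\bF_{q^2})$ in a cubic with three $\bF_{q^2}$-rational inflexion points on $L(\bF_{q^2})$. First one shows $\pi'(\bF_{q^2})$ meets $\cV(\bF_{q^2})$ in exactly one point: rank-$1$ points of $\pi'(\bF_{q^2})$ not defined over $\bF_q$ occur in $\sigma$-conjugate pairs, so two or more of them would give at least three rank-$1$ points on $\pi'(\bF_{q^2})$, forcing $\pi'(\bF_{q^2})$ into $\Sigma_1(\bF_{q^2})$ or $\Sigma_2(\bF_{q^2})$ by Section~\ref{atleast3}; but a plane of type $\Sigma_1$ has identically zero determinant, and for type $\Sigma_2$ the associated cubic is a triangle whose Hessian, computed via Lemma~\ref{hessian}, equals the triangle itself, so in neither case can the cubic have exactly three inflexion points on a line --- a contradiction. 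Hence $\pi'(\bF_{q^2})$ contains $L(\bF_{q^2})\in o_{14}(\bF_{q^2})$, meets $\cV(\bF_{q^2})$ in exactly one point, and has three $\bF_{q^2}$-rational inflexion points on $L(\bF_{q^2})$, so by the uniqueness in Corollary~\ref{cor:bijection} we get $\pi'(\bF_{q^2})=\Pi$ and therefore $\pi'=\pi$. The routine part of the proof is the descent of $\Pi$ to $\bF_q$; the main obstacle is this last uniqueness step, namely ruling out planes of $\PG(5,q)$ whose field extension is not of type $\Sigma_{14}$, which is where one must combine the classification of planes meeting $\cV$ in at least three points with the behaviour of inflexion points of reducible cubics.
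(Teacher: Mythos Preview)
Your argument is correct and follows essentially the same Galois descent idea as the paper: the paper's proof simply refers back to Lemma~\ref{q>4even}, whose content is precisely the descent you spell out --- extend to $\bF_{q^s}$, invoke Corollary~\ref{cor:bijection}, and use Frobenius invariance to come back. Your version is a more detailed unpacking of that reference, and your uniqueness argument via ruling out $\Sigma_1(\bF_{q^2})$ and $\Sigma_2(\bF_{q^2})$ is a nice self-contained alternative to the paper's counting bijection between $\Sigma_{12}$ and $o_{15}$.

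One point deserves tightening. In your direct argument that $L(\bF_{q^2})\in o_{14}(\bF_{q^2})$, you write that the binary cubic on $L$ ``has the unique rank-$2$ point of $L$ as its only $\bF_q$-rational root, hence splits over $\bF_{q^2}$ into three distinct points''. This inference is not valid without further work: a cubic with a single $\bF_q$-root could equally well have a triple root, and in fact this is exactly what happens for lines of type $o_{16,2}$, which share the point-orbit distribution $[0,0,1,q]$ with $o_{15}$ but remain of type $o_{16,2}$ after extension (the determinant cubic on the representative of $o_{16,2}$ is $s^3$). So the direct argument as written does not distinguish $o_{15}$ from $o_{16,2}$. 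Your other justification --- that $K$ is transitive on $o_{15}$ and the proof of Lemma~\ref{q>4even} exhibits one line of type $o_{15}$ whose extension lies in $o_{14}(\bF_{q^2})$ --- is correct and suffices; you should lean on that and drop or repair the ``concretely'' sentence. The remark that $\cV$ is a cap is also not the right reason for the absence of rank-$1$ points on $L(\bF_{q^2})$; the cleaner observation is that a conjugate pair of rank-$1$ points would make $L(\bF_{q^2})$ a secant, forcing the third low-rank point to have rank $\le 2$ and lie on that secant's conic plane, contradicting the $\bF_q$-type of $L$.
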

\begin{proof}
The proof follows by the correspondence between  $\{o_{15},o_{17}\}$ and  $\{\Sigma_{12},\Sigma_{13}\}$ defined in Lemma  \ref{q>4even}, and by the geometric-combinatorial characteristics of planes in $\Sigma_{12}$ and $\Sigma_{13}$ and their extensions over $\bF_{q^s}$, $s\in\{2,3\}$, mentioned in Lemmas \ref{pic} and \ref{q>4even}.
\end{proof}

\begin{Lemma}
The point-orbit distributions of planes in $\Sigma_{12}$, $\Sigma_{13}$ and $\Sigma_{14}$ are given by $[1,0,q+1,q^2-1]$, $[1,0,q-1,q^2+1]$ and $[1,0,q\mp 1,q^2\pm1]$, respectively. In particular, these orbits are distinct from each other and from the previously defined orbits $\Sigma_i$, $1\leq i\leq 11$.
\end{Lemma}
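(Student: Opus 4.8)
The plan is to read all three distributions off one family of representatives and then peel the orbits apart using invariants that are already available.

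\emph{Computing the distributions.} By Lemmas~\ref{q>4even} and~\ref{sigma14unique}, each of $\Sigma_{12},\Sigma_{13},\Sigma_{14}$ is the $K$-orbit of a plane $\pi_c$ with the corresponding trace/admissibility condition on $c$ recorded in Remark~\ref{inflexionrmk}, so it suffices to treat $\pi_c$. A point of $\pi_c$ lies in the nucleus plane $\mathcal N$ exactly when the diagonal entries $x$, $y+z$, $c^2x+z$ of its matrix all vanish, which forces $(x,y,z)=0$; hence $r_{2n}=0$. A short computation with $2\times 2$ minors shows that the only rank-$1$ point of $\pi_c$ is $\nu(1,0,c)$, so $r_1=1$. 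By Lemma~\ref{pic} the associated cubic is $\cZ(f)$ with $f=X(Z^2+YZ+c^2Y^2)+Y^2Z$, whose $\Fq$-rational points are exactly the rank-$\le2$ points of $\pi_c$. On the line $Y=0$ the equation reduces to $XZ^2=0$, giving the two points $(1,0,0)$ and $(0,0,1)$; on the affine chart $Y=1$ it becomes $X(Z^2+Z+c^2)+Z=0$, which for each $Z$ with $Z^2+Z+c^2\neq0$ has exactly one solution $X$, and for each root of $Z^2+Z+c^2$ (necessarily nonzero, as $c\neq0$) has none. By Lemma~\ref{quadratic}, $Z^2+Z+c^2$ has no root in $\Fq$ when $\operatorname{Tr}(c)=1$ and two roots when $\operatorname{Tr}(c)=0$. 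Thus $\cZ(f)$ has $q+2$ $\Fq$-points if $\operatorname{Tr}(c)=1$ and $q$ if $\operatorname{Tr}(c)=0$, so $r_{2s}=q+1,\ r_3=q^2-1$ in the first case and $r_{2s}=q-1,\ r_3=q^2+1$ in the second. Specialising to $\operatorname{Tr}(c)=1$ for $\Sigma_{12}$, $\operatorname{Tr}(c)=0$ for $\Sigma_{13}$, and $\operatorname{Tr}(c)=\operatorname{Tr}(1)$ for $\Sigma_{14}$ (so $\operatorname{Tr}(c)=1$ iff $q=2^{2m+1}$) produces the three distributions in the statement.

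\emph{Distinctness.} Both $\Sigma_{12}$ and $\Sigma_{13}$ are nonempty for every even $q>2$ by Remark~\ref{inflexionrmk} and Lemma~\ref{q>4even}, and they have different distributions, so $\Sigma_{12}\neq\Sigma_{13}$. Inspecting Table~\ref{tableplanes}, the only one of $\Sigma_1,\dots,\Sigma_{11}$ with $r_1=1$ and $r_{2n}=0$ is $\Sigma_6$, with distribution $[1,0,q+1,q^2-1]$; hence $\Sigma_{13}$, and $\Sigma_{14}$ when $q=2^{2m}$, are already separated from $\Sigma_1,\dots,\Sigma_{11}$ by their distribution $[1,0,q-1,q^2+1]$. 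It remains to separate $\Sigma_{12}$ (and $\Sigma_{14}$ when $q=2^{2m+1}$) from $\Sigma_6$, and to separate $\Sigma_{14}$ from $\Sigma_{12}$ and $\Sigma_{13}$. For the first, $K$-equivalent planes have $\PGL(3,q)$-equivalent associated cubics, so $\Fq$-reducibility of the associated cubic is a $K$-invariant: the cubic $\cZ(X(Y^2+cYZ+Z^2))$ of a plane in $\Sigma_6$ has the linear factor $X$ over $\Fq$, whereas $f=X(Z^2+YZ+c^2Y^2)+Y^2Z$ is $\Fq$-irreducible --- a factorisation into a linear and a quadratic form would, on comparing coefficients of $X$, force either $Z^2+YZ+c^2Y^2$ to divide $Y^2Z$, or a linear factor of $Z^2+YZ+c^2Y^2$ (necessarily of the shape $Z-aY$ with $a\neq0$, since the product of its roots is $c^2\neq0$) to divide $Y^2Z$, and neither is possible. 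Hence $\Sigma_{12},\Sigma_{14}\notin\{\Sigma_1,\dots,\Sigma_{11}\}$. For the second, the number of inflexion points is a $K$-invariant; by Lemma~\ref{pic} a plane in $\Sigma_{14}$ has three, while by the defining conditions of $\Sigma_{12}$ and $\Sigma_{13}$ (trace different from $\operatorname{Tr}(1)$, or inverse not admissible) a plane in either of these has at most one. Therefore $\Sigma_{14}\neq\Sigma_{12},\Sigma_{13}$, and the proof is complete.

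\emph{The main obstacle.} Step one is routine arithmetic. The real point is that the point-orbit distribution alone is insufficient here: it coincides for $\Sigma_{12}$ and $\Sigma_6$, and it coincides for $\Sigma_{14}$ with exactly one of $\Sigma_{12},\Sigma_{13}$ according to the parity of $h$. The care lies in choosing and verifying the finer invariants --- the $\Fq$-irreducibility of $f$ (versus the reducible cubic of $\Sigma_6$) and the inflexion count from Lemma~\ref{pic} --- and in keeping the two parity cases of $q=2^{2m}$ and $q=2^{2m+1}$ bookkept correctly.
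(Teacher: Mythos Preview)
Your proof is correct and largely parallels the paper's. The computation of the point-orbit distributions is the same in substance (your affine-chart count of the $\Fq$-points of $\cZ(f)$ is in fact more explicit than the paper's parametrisation), and both arguments separate $\Sigma_{14}$ from $\Sigma_{12},\Sigma_{13}$ via the inflexion count from Lemma~\ref{pic}.

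The one genuine methodological difference is how you separate $\Sigma_{12}$ (and $\Sigma_{14}$ in the odd-$h$ case) from $\Sigma_6$. You argue that $\Fq$-reducibility of the associated cubic is a $K$-invariant and verify directly that $f=X(Z^2+YZ+c^2Y^2)+Y^2Z$ has no linear factor over $\Fq$, whereas the $\Sigma_6$ cubic visibly splits off the factor $X$. The paper instead uses a geometric invariant: in $\Sigma_6$ all $q+1$ rank-$2$ points lie in a single conic plane (they all have the same associated conic $\nu(\langle e_2,e_3\rangle)$), whereas in $\Sigma_{12}\cup\Sigma_{14}$ the rank-$2$ points determine pairwise distinct conics. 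Your irreducibility argument is slightly more self-contained (it needs nothing beyond the cubic's equation), while the paper's argument has the advantage of fitting the running theme of using the map $R\mapsto\cC(R)$ as an invariant; either is a perfectly adequate separator here. A minor remark: you distinguish $\Sigma_{12}$ from $\Sigma_{13}$ by their point-orbit distributions, while the paper does this via inflexion counts (which also differ, being $0$ and $1$ in opposite orders depending on the parity of $h$); both routes are valid.
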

\begin{proof}
Consider the cubic curve associated with $\Sigma_{i}$, $i \in \{12,13,14\}$,  defined by \begin{equation}X(Z^2+YZ+c^2Y^2)+Y^2Z=0.\end{equation} 
If $Tr(c)=1$ (resp. $Tr(c)=0$), then $\Sigma_{12}$ (resp. $\Sigma_{13}$) has $q+1$ (resp. $q$) rank-2 points parameterized by \begin{equation}(\frac{y^2z}{z^2+yz+c^2y^2},y,z).\end{equation} 
Furthermore, depending on the $Tr(c)$, being $0$ if $h$ is even or $1$ if $h$ is odd, the orbit $\Sigma_{14}$ has either $q-1$ or $q+1$ rank-2 points. Therefore, point-orbit distributions of planes in $\Sigma_{12}$, $\Sigma_{13}$ and $\Sigma_{14}$ are given by $[1,0,q+1,q^2-1]$, $[1,0,q-1,q^2+1]$ and $[1,0,q\mp 1,q^2\pm1]$, respectively. 
Note that, the fact that the orbits $\{\Sigma_{12}, \Sigma_{13}, \Sigma_{14}\}$ are distinct from each other follows from comparing  their inflexion points (see Lemma \ref{inflexion} and Remark \ref{inflexionrmk}). The fact that these orbits are different from the previously defined orbits (with the exception of $\Sigma_6$ for $\Sigma_{12}$ and $\Sigma_{14}$) follows from comparing their point-orbit distributions. Finally, $\Sigma_6\not\in\{\Sigma_{12}, \Sigma_{14}\}$ since a plane in $\Sigma_6$ contains distinct points which define the same conic plane, while all rank-2 points of a plane in $\Sigma_{12}\cup \Sigma_{14}$ define distinct conic planes.
\end{proof}

\boxed{$(d$-$ii$-$B)$} 
Finally, assume that $\pi=\langle Q_1,Q_2,Q_3\rangle$, where $\pi\cap \mathcal{N}=\emptyset$, $Q_1$ is not lying on $\mathcal{C}(Q_2)\cup \mathcal{C}(Q_3)$ and $Q_2,Q_3$ are both not lying on the tangent of their conics through $U=\mathcal{C}(Q_2)\cap\mathcal{C}(Q_3)$. Provided that $q>2$, we prove the existence of such planes if and only if $q=4$. 

 Without loss of generality, let $q_1=\langle e_1\rangle$, $u=\langle e_3\rangle $, $l_2=\langle e_2,e_3\rangle$ and $l_3=\langle e_3,e_1+e_2\rangle$, where $Q_1=\nu(q_1)$, $U=\nu(u)$, $\mathcal{C}(Q_2)=\nu(l_2)$ and $\mathcal{C}(Q_3)=\nu(l_3)$.

Furthermore, let $R_2=\nu(r_2)$ and $R_3=\nu(r_3)$ denote $\mathcal{C}(Q_2)\cap\langle U,Q_2\rangle$ and $\mathcal{C}(Q_3)\cap\langle U,Q_3\rangle$ respectively. We have two possibilities, either $r_3=\langle q_1,r_2\rangle\cap l_3$ or $r_3\neq\langle q_1,r_2\rangle\cap l_3$. 

In the first case, we may fix $r_3$ as $(1,1,0)$, and then $\pi$ can be represented by 
$$
\begin{bmatrix} x+z&z&.\\z&y+z&.\\.&.&by+cz \end{bmatrix},
$$
for some $b,c \in \Fq$, where $Q_2=(0,0,0,1,0,b)$ and $Q_3=(1,1,0,1,0,c)$. This case will not define a new orbit, as one can always find a point $Q'_3(x,y,z)$ on the line $\mathcal{Z}(bY+cZ)$, such that $\pi=\langle Q_1,Q_2,Q_3\rangle$ where $Q'_3$ is a  point of rank two with $Q_1 \in \mathcal{C}(Q'_3)$, returning us to Case \ref{case c}. Therefore, we are left with the case that $r_3\neq\langle q_1,r_2\rangle\cap l_3$. Without loss of generality we may choose $r_3=\langle e_1+e_2+e_3\rangle$. Then,  $Q_2=(0,0,0,1,0,b)$ and $Q_3=(1,1,1,1,1,c)$ for some $b,c$ in $\Fq$. It follows that $\pi$ can be represented by $$\pi_{b,c}=\begin{bmatrix} x+z&z&z\\z&y+z&z\\ z&z&by+cz \end{bmatrix},$$ where $b(c-1)\neq 0$ since $Q_2$ and $Q_3$ are points of rank two.

\begin{Lemma}\label{prime}
If $\pi_{b,c}\not\in \Sigma_i$, $1\leq i\leq 14$ and $b(c-1)\neq 0$, then $\pi_{b,c}$ has $q+1$ or $q-1$ points of rank two.
\end{Lemma}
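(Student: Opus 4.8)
The plan is to count the rank-$2$ points of $\pi_{b,c}$ through the $\bF_q$-rational points of its associated planar cubic, reduce that to counting roots of a binary quadratic form, and then show that the single ``forbidden'' value of this count only occurs for a plane that already appeared in an earlier orbit.

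\textbf{The cubic and the rank count.} First I would compute the determinant of the matrix representation of $\pi_{b,c}$; collecting the terms containing $X$, the associated cubic is $\mathscr{C}(\pi_{b,c})=\mathcal{Z}(F)$ with
\[
F = X\,Q(Y,Z) + YZ\bigl(bY+(c+1)Z\bigr), \qquad Q(Y,Z)=bY^{2}+(b+c)YZ+(c+1)Z^{2},
\]
and the $\bF_q$-rational points of $\mathscr{C}(\pi_{b,c})$ are exactly the rank-$\le 2$ points of $\pi_{b,c}$. Inspecting the $2\times 2$ minors of the matrix (the two relevant ones being $(Y+Z)(bY+cZ)-Z^{2}=Q(Y,Z)$ and $YZ$) and using $b\neq 0$ and $c\neq 1$, the only rank-$1$ point of $\pi_{b,c}$ is $\nu(e_1)=(1:0:0)$, so the number of rank-$2$ points is $|\mathscr{C}(\pi_{b,c})(\bF_q)|-1$. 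Next I would count $|\mathscr{C}(\pi_{b,c})(\bF_q)|$ by projecting $(X:Y:Z)\mapsto (Y:Z)$: the point $(1:0:0)$ always lies on the cubic; over a point $(Y_0:Z_0)\in\PG(1,q)$ with $Q(Y_0,Z_0)\neq 0$ there is exactly one point of $\mathscr{C}(\pi_{b,c})$, and over a root of $Q$ there is none because $Q$ and $YZ(bY+(c+1)Z)$ have no common zero in $\PG(1,q)$ --- a common zero with $Y=0$ would force $c=1$, one with $Z=0$ would force $b=0$, and one with $bY+(c+1)Z=0$ makes $Q$ evaluate to $c+1\neq 0$, all excluded by $b(c-1)\neq 0$. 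Hence $\pi_{b,c}$ has exactly $q+1-n_Q$ points of rank $2$, where $n_Q\in\{0,1,2\}$ is the number of roots of $Q$ in $\PG(1,q)$.

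\textbf{The exceptional case $c=b$.} By Lemma \ref{quadratic} applied to $Q(Y,1)=bY^{2}+(b+c)Y+(c+1)$ (here $b\neq 0$ and $Q(1,0)=b\neq 0$), one gets $n_Q=1$ exactly when $b+c=0$, i.e. $c=b$, and $n_Q\in\{0,2\}$ otherwise; so the rank-$2$ count is $q-1$ or $q+1$ unless $c=b$, in which case it is $q$. It remains to show that $\pi_{b,b}$ (with $b\neq 0,1$) lies in $\Sigma_i$ for some $i\le 14$, so that this case is excluded by the hypothesis of the lemma. The key observation is that $\pi_{b,b}$ contains the rank-$2$ point with parameters $(1:1:1)$, whose matrix $\left[\begin{smallmatrix}0&1&1\\1&0&1\\1&1&0\end{smallmatrix}\right]$ lies in the nucleus plane $\mathcal{N}$. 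Since $\pi_{b,b}$ still meets $\cV(\bF_q)$ only in $\nu(e_1)$ and is spanned by points of rank at most $2$ (for instance $\nu(e_1)$ together with the rank-$2$ points $(0:0:0:1:0:b)$ and $(1:1:1:1:1:b)$), it realises one of the configurations (a)--(d) of Section \ref{qwe}; since $\pi_{b,b}\cap\mathcal{N}\neq\emptyset$ it cannot fall under configuration (d-ii), and the remaining configurations were all shown to give planes lying in $\Sigma_3,\dots,\Sigma_{11}$, whence $\pi_{b,b}\in\Sigma_i$ for some $i\le 14$. (In fact $\pi_{b,b}$ has point-orbit distribution $[1,1,q-1,q^{2}]$ and meets $\mathcal{N}$ in the single point $(1:1:1)$, so one expects $\pi_{b,b}\in\Sigma_{11}$.)

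\textbf{Main obstacle.} Once the determinant is written down, the only genuinely necessary computation is the ``no common zero'' claim for $Q$ and $YZ(bY+(c+1)Z)$, which is where $b\neq 0$ and $c\neq 1$ enter and which I would flag carefully. The delicate point is the last step: placing $\pi_{b,b}$ into a previously classified orbit without a circular appeal to the main theorem. The cleanest route is to exhibit an explicit congruence $X\in\GL(3,q)$ carrying the matrix of $\pi_{b,b}$ to the representative of $\Sigma_{11}$ in Table \ref{tableplanes} --- the unique rank-$1$ point and the rank-$2$ point lying in $\mathcal{N}$ already pin down the configuration, so this is a finite verification --- after which the lemma follows.
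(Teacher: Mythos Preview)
Your argument is correct and reaches the same conclusion, but by a genuinely different mechanism than the paper's proof. The paper does not isolate the special value $c=b$ at all; instead it uses the ambient geometric hypotheses of case $(d$-$ii$-$B)$ --- that $\pi_{b,c}\cap\mathcal N=\emptyset$ and that $\pi_{b,c}$ contains no line of type $o_{13,2}$ --- to run a parity argument: any line of $\pi_{b,c}$ through two rank-$2$ points must then lie in $o_{14}$ (via Remark~\ref{P121314} and Lemma~\ref{linesspannedbyrank2}), hence carries exactly three rank-$2$ points, so the rank-$2$ points other than a fixed one pair off and the total is odd. Since $q$ is even this immediately rules out the value $q$.

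Your route trades that geometry for algebra: you observe via Lemma~\ref{quadratic} that $n_Q=1$ forces $b+c=0$, and then exhibit the point of $\pi_{b,b}$ parametrised by $(1{:}1{:}1)$ as a point of the nucleus plane, so that $\pi_{b,b}$ falls back under case $(d$-$i)$ and hence into some $\Sigma_i$ with $i\le 11$. This is perfectly valid and in some ways more transparent --- it pinpoints exactly which parameter value violates the standing assumption $\pi\cap\mathcal N=\emptyset$ --- whereas the paper's argument is slicker but leans on the line-orbit classification. Two small remarks: your evaluation of $Q$ at the zero of $bY+(c+1)Z$ actually gives $b(c+1)$ rather than $c+1$, though the conclusion is unaffected since $b\neq 0$; and you do not need the explicit congruence to $\Sigma_{11}$ flagged at the end --- the observation $\pi_{b,b}\cap\mathcal N\neq\emptyset$ already places $\pi_{b,b}$ outside case $(d$-$ii)$, and the earlier sections handle all such planes.
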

 
 \begin{proof}
 The cubic curve $\mathscr{C}_{b,c}$ associated with $\pi_{b,c}$ is defined by \begin{equation}
 Xf_{b,c}(Y,Z)+g_{b,c}(Y,Z)=0,\end{equation} where \begin{equation}\label{f}f_{b,c}(Y,Z)=bY^2+(b+c)YZ+(1+c)Z^2,\end{equation} and \begin{equation}\label{g}g_{b,c}(Y,Z)=bY^2Z+(1+c)YZ^2,\end{equation} and thus $\pi_{b,c}$ has $q+1$, $q$ or $q-1$ rank-2 points depending on the number of  points of $\mathcal{Z}(f_{b,c})$ on $\PG(1,q)$ being zero, one or two, respectively. By Remark \ref{P121314}, any line in $\pi_{b,c}$ passing through two rank-2 points must belong to $o_{14}$. 
 Therefore, fixing a rank-2 point $Q\in \pi_{b,c}$ and considering all lines spanned by $Q$ and the remaining rank-2 points in $\pi_{b,c}$ partitions the set of rank-2 points in $\pi_{b,c}\setminus\{Q\}$ into pairs. Hence, the number of rank-2 points in $\pi_{b,c}$ is odd. More precisely, $\pi_{b,c}$ has $q+1$ rank-2 points if $\mathcal{Z}(f_{b,c})$ has no points in $\PG(1,q)$ and $q-1$ rank-2 points if $\mathcal{Z}(f_{b,c})$ has two points in $\PG(1,q)$.
 \end{proof}

 \begin{Lemma}\label{q>4}
 Let $q=2^h>2$. If $\pi_{b,c}\not\in \Sigma_i$, $1\leq i\leq 14$ and $b(c-1)\neq 0$, then $q=4$.

 \end{Lemma}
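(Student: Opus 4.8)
The plan is to compare $\pi_{b,c}$ with the short list of orbits sharing its point-orbit distribution, and then to use the inflexion line of its associated cubic --- together with the bijections of Corollaries~\ref{cor:bijection} and \ref{bijection_12_13} --- to force $\pi_{b,c}$ into a previously found orbit whenever $q>4$, which is the contrapositive of the statement. Since $\pi_{b,c}\cap\mathcal{N}=\emptyset$ and $\pi_{b,c}$ has a unique rank-$1$ point, Lemma~\ref{prime} gives point-orbit distribution $[1,0,q+1,q^2-1]$ or $[1,0,q-1,q^2+1]$; by Table~\ref{tableplanes} the only orbits among $\Sigma_1,\dots,\Sigma_{14}$ realising one of these is $\Sigma_6$, $\Sigma_{12}$, $\Sigma_{13}$, or (depending on the parity of $h$) $\Sigma_{14}$, so it suffices to show that for $q>4$ the plane $\pi_{b,c}$ lies in one of these four orbits.

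First I would eliminate $\Sigma_6$ and prove $b\neq c$. A plane of $\Sigma_6$ has a line of type $o_{10}$, hence $q+1\ge 3$ collinear rank-$2$ points, whereas by Remark~\ref{P121314} (as in the proof of Lemma~\ref{prime}) every line of $\pi_{b,c}$ through two rank-$2$ points is of type $o_{14}$; so $\pi_{b,c}$ has no $o_{10}$-line and $\pi_{b,c}\notin\Sigma_6$. The same parity input gives $b\neq c$: if $b=c$ then $f_{b,c}$ in \eqref{f} is a perfect square, so $\mathcal{Z}(f_{b,c})$ has a single point of $\PG(1,q)$ and Lemma~\ref{prime} would give $\pi_{b,c}$ exactly $q$ rank-$2$ points, contradicting the parity established there. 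In particular $a_{012}=b+c\neq 0$, so Lemma~\ref{hessian} applies to $\mathscr{C}_{b,c}$.

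The core step is the analysis of $\mathscr{C}_{b,c}=\mathcal{Z}\!\left(Xf_{b,c}(Y,Z)+g_{b,c}(Y,Z)\right)$. I would check that its only singular point is the rank-$1$ point $(1,0,0)$, with tangent cone $\mathcal{Z}(f_{b,c})$ a pair of distinct lines (since $b+c\neq 0$), so that $\mathscr{C}_{b,c}$ is an irreducible nodal cubic; its three inflexion points over $\overline{\bF}_q$ form a Galois-stable set, so the number $\epsilon$ of inflexion points of $\pi_{b,c}$ is $0$, $1$, or $3$, with the missing flexes living in $\bF_{q^3}$, $\bF_{q^2}$, or already in $\bF_q$ respectively. (Running the Hessian elimination of Lemma~\ref{hessian} as in the proof of Lemma~\ref{pic}, I expect the flexes to be cut out by a Berlekamp cubic $\theta^3+\theta+a=0$ with $a=a(b,c)\in\bF_q^*$, which would also recover this trichotomy via Theorem~\ref{cubicequation}.) Now let $q>4$. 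If $\epsilon=3$, the flexes are collinear on a line $L$ of type $o_{14}$ (Lemma~\ref{linesspannedbyrank2}) and Corollary~\ref{cor:bijection} identifies $\pi_{b,c}$ with the unique plane of $\Sigma_{14}$ through $L$. If $\epsilon=1$ (resp.\ $\epsilon=0$), the analogous picture over $\bF_{q^2}$ (resp.\ $\bF_{q^3}$) gives a Frobenius-invariant line of type $o_{14}(\bF_{q^2})$ (resp.\ $o_{14}(\bF_{q^3})$) meeting $\PG(5,q)$ in an $\bF_q$-line of type $o_{15}$ (resp.\ $o_{17}$) --- one must here rule out type $o_{16,2}$, which stays $o_{16,2}$ under extension --- so Corollary~\ref{bijection_12_13} places $\pi_{b,c}$ in $\Sigma_{12}$ (resp.\ $\Sigma_{13}$). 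In every case $\pi_{b,c}\in\{\Sigma_1,\dots,\Sigma_{14}\}$, contradicting the hypothesis; hence $q=4$.

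The main obstacle is the explicit inflexion-point computation: writing down $\Phi^2(A)$ and showing that, away from the node, $\mathscr{C}_{b,c}$ meets its Hessian exactly in the zero set of a single Berlekamp cubic, and then checking, in the cases $\epsilon\in\{0,1\}$, that the extended inflexion line is genuinely of type $o_{15}$ or $o_{17}$ and not $o_{16,2}$. This dichotomy is exactly what collapses when $q=4$: over $\bF_4$ no scalar is admissible, so $\theta^3+\theta+a$ never has three roots and the extended inflexion line can fall into $o_{16,2}$, so the bijection arguments break down --- which is precisely how the extra orbit $\Sigma_{14}'$ arises and why it is handled separately.
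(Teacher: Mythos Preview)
Your approach is correct in outline but genuinely different from the paper's. The paper does \emph{not} treat general $(b,c)$ via the flex trichotomy. Instead it first pins down $(b,c)=(1,0)$ by a very short parity trick: it looks at the two concrete lines $\mathcal{Z}(X+Z)$ and $\mathcal{Z}(Y+Z)$ in $\pi_{b,c}$, observes that (under the standing assumptions) any line off $Q_1$ must carry either one or three rank-$2$ points, and then computes that each of these two lines carries exactly two unless $b=1$ (resp.\ $c=0$). Only after this reduction does the paper compute, for the single plane $\pi_{1,0}$, the three collinear inflexion points and invoke the $\Sigma_{14}$ characterisation (Lemma~\ref{phi14}) to conclude $\pi_{1,0}\in\Sigma_{14}$ for $q>4$. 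So the paper uses the inflexion-line bijection only at the very end, for one explicit plane, whereas you use it as the organising principle throughout.

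What each approach buys: the paper's route avoids the general verification that $\mathscr{C}_{b,c}$ is an irreducible nodal cubic with three distinct collinear flexes over $\overline{\bF}_q$ (which you flag as the main obstacle and would still need to carry out), trading it for two elementary line computations. Your route is more conceptual and explains transparently \emph{why} the bijections of Corollaries~\ref{cor:bijection} and~\ref{bijection_12_13} force $\pi_{b,c}$ into $\Sigma_{12}$, $\Sigma_{13}$ or $\Sigma_{14}$; it also makes clear that the $\epsilon=0,1$ cases never actually survive the hypothesis (they land in $\Sigma_{13},\Sigma_{12}$ already for $q>2$). One small correction: your diagnosis of the $q=4$ breakdown is slightly off. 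The issue is not that the inflexion line might fall into $o_{16,2}$ --- for $\pi_{1,0}$ the three flexes are $\bF_2$-rational and the inflexion line is genuinely of type $o_{14}$ even over $\bF_4$ --- but simply that Corollary~\ref{cor:bijection} requires $q>4$ because $\Sigma_{14}$ does not exist for $q=4$; the $\epsilon=3$ case therefore yields a new orbit precisely when $q=4$.
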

 \begin{proof}

As in the proof of Lemma \ref{prime}, the cubic curve corresponding to
 $\pi_{b,c}$ has equation 
 \begin{equation}
 Xf_{b,c}(Y,Z)+g_{b,c}(Y,Z)=0,
 \end{equation} 
 where $f_{b,c}$ and $g_{b,c}$ are defined in \eqref{f} and \eqref{g}. 
 
 Since $\cZ(f_{b,c})$ has $0$ or $2$ points on $\PG(1,q)$ (which follows from the proof of Lemma \ref{prime}) it follows that $b\neq c$. Consider the line $L=\cZ(X+Z)$ of $\pi_{b,c}$. Since $Q_1$ is not lying on $L$, it follows that $L$ is of type $o_{14}$, $o_{15}$ or $o_{16,2}$, by Remark \ref{P121314} and Table \ref{tableoflines}. More specifically, $L$ has either $3$ points of rank 2 or a unique point of rank 2. In particular, rank-2 points on $L$ satisfy the equation \begin{equation}X^2((1+c)X+(1+b)b)=0,\end{equation} which has exactly two solutions unless $b=1$. 
 
 Similarly, we can consider the line $L'=\cZ(Y+Z)$ of $\pi_{b,c}$. This line has no rank-1 points and has exactly two rank-2 points satisfying \begin{equation}X^2(X+cY)=0,\end{equation} unless $c=0$. Therefore, $b=1$, $c=0$ and $\pi$ reduces to $\pi_{1,0}$, which has $q+1$ rank-2 points if $h$ is odd and $q-1$ rank-2 points if $h$ is even. By Lemma \ref{hessian}, the Hessian of $\mathscr{C}_{1,0}$ defined by \begin{equation}\mathcal{Z}(X(Y^2+YZ+Z^2)+Y^3+YZ^2+ZY^2),\end{equation} intersects $\mathscr{C}_{1,0}$ in three collinear points lying on the line $\mathscr{L}''=\cZ(X+Y)$. Since $Q_1\not\in\mathscr{L}''$ and the configuration of $\pi_{1,0}$ coincides with the second configuration of $\Sigma_{14}$ described in Lemma \ref{phi14}, it follows that for $q>4$, $\pi_{1,0}\in\Sigma_{14}$. Therefore, $\pi_{1,0}$ defines a new orbit if and only if $q=4$. \end{proof}

 We denote this orbit by $\Sigma_{14}'$ which can be represented by $$\Sigma_{14}':\begin{bmatrix} x+z&z&z\\z&y+z&z\\ z&z&y \end{bmatrix}.$$

 \begin{Remark}
 Lemma \ref{q>4} shows that every plane $\pi=\langle Q_1,Q_2,Q_3\rangle$ in $\PG(5,q)$, $q=2^h>4$, containing a unique rank-1 point $Q_1$, where $Q_1\not\in\mathcal{C}(Q_2)\cup\mathcal{C}(Q_3)$ and $\pi\cap \mathcal{N}=\emptyset$, must belong to $\{\Sigma_{12},\Sigma_{13},\Sigma_{14}\}$.
 \end{Remark}

 \begin{Lemma}
 The point-orbit distribution of a plane in $\Sigma_{14}'$ is $[1,0,3,17]$. In particular, $\Sigma_{14}'\not\in  \{\Sigma_1,\Sigma_2,\Sigma_3,\Sigma_4,\Sigma_5,\Sigma_6,\Sigma_7,\Sigma_8,\Sigma_9,\Sigma_{10},\Sigma_{12},\Sigma_{13}\}$.
 \end{Lemma}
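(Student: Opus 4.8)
The plan is to compute the point-orbit distribution directly from the stated representative $\pi'$ of $\Sigma_{14}'$ over $\bF_4$ --- this is the plane $\pi_{1,0}$ introduced in Section~\ref{casedii} (i.e.\ $\pi_{b,c}$ with $b=1$, $c=0$) --- and then to separate $\Sigma_{14}'$ from each orbit in the list, by comparing point-orbit distributions wherever that suffices and by a finer $K$-invariant where the distributions coincide. Write $\mathscr{C}'=\mathscr{C}(\pi')$ for the associated cubic curve.

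First I would write down $\mathscr{C}'$: by \eqref{f} and \eqref{g} with $b=1$, $c=0$ it equals $\cZ\bigl(X(Y^2+YZ+Z^2)+Y^2Z+YZ^2\bigr)$. Then I would list its $\bF_4$-rational points. On the line $\cZ(Z)$ the equation reduces to $XY^2=0$, contributing $(1,0,0)$ and $(0,1,0)$; on the affine part $Z=1$ it becomes $X(Y^2+Y+1)=Y^2+Y$, and since the only zeros of $Y^2+Y+1$ in $\bF_4$ are the two primitive cube roots of unity --- at which the equation reads $0=1$ and has no solution --- the affine points are exactly $(0,0,1)$ (from $Y=0$) and $(0,1,1)$ (from $Y=1$). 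Hence $\mathscr{C}'$ has exactly four $\bF_4$-points, so $\pi'$ has exactly four points of rank at most $2$. Among these, $(1,0,0)$ is the unique rank-$1$ point --- it is the only point of $\pi'$ at which all $2\times2$ minors of $M_{(x,y,z)}$ vanish --- so the remaining three have rank $2$. Finally, a point of $\pi'$ lies in the nucleus plane $\mathcal N$ only if its matrix has zero diagonal, i.e.\ $x+z=y+z=y=0$, which forces $x=y=z=0$; thus $\pi'\cap\mathcal N=\emptyset$. Counting gives $r_1=1$, $r_{2n}=0$, $r_{2s}=3$ and $r_3=(q^2+q+1)-4=17$, so the point-orbit distribution of a plane in $\Sigma_{14}'$ is $[1,0,3,17]$.

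For the separation, I would read off from Table~\ref{tableplanes} the point-orbit distributions of $\Sigma_1,\dots,\Sigma_{10},\Sigma_{12}$ at $q=4$, namely $[5,1,15,0]$, $[3,0,9,9]$, $[2,1,6,12]$, $[2,1,6,12]$, $[2,0,6,13]$, $[1,0,5,15]$, $[1,5,15,0]$, $[1,5,3,12]$, $[1,1,7,12]$, $[1,1,7,12]$, $[1,0,5,15]$; none equals $[1,0,3,17]$, so $\Sigma_{14}'$ differs from all of them. The one orbit needing extra work is $\Sigma_{13}$, whose point-orbit distribution at $q=4$ is also $[1,0,3,17]$; this coincidence is the \textbf{main obstacle}. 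To resolve it I would compare inflexion points: by Lemma~\ref{hessian} the Hessian of $\mathscr{C}'$ equals $\cZ\bigl(X(Y^2+YZ+Z^2)\bigr)$, on which the three rank-$2$ points $(0,1,0),(0,0,1),(0,1,1)$ of $\pi'$ all lie; since each is a non-singular point of $\mathscr{C}'$ (whose unique singular point is the rank-$1$ point $(1,0,0)$), each is an inflexion point, so a plane in $\Sigma_{14}'$ has three inflexion points. On the other hand a plane in $\Sigma_{13}$ is a plane $\pi_c$ with $\operatorname{Tr}(c)=\operatorname{Tr}(1)$ and $c^{-1}$ not admissible; at $q=4$ this forces $c=1$, and no scalar is admissible since $\bF_4\setminus\bF_4=\emptyset$, so by Lemma~\ref{pic} such a plane has no inflexion point. (Equivalently, the three rank-$2$ points of a plane in $\Sigma_{14}'$ are collinear --- they lie on the line $\cZ(X)$ of $\pi'$, which is of type $o_{14}$ --- whereas a direct check on the $q=4$ representative of $\Sigma_{13}$ shows its three rank-$2$ points are not collinear.) Hence $\Sigma_{14}'\ne\Sigma_{13}$, and together with the earlier comparisons this yields $\Sigma_{14}'\notin\{\Sigma_1,\dots,\Sigma_{10},\Sigma_{12},\Sigma_{13}\}$. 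Apart from the separation from $\Sigma_{13}$, every step is a short finite computation over $\bF_4$.
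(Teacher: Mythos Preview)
Your proof is correct and follows essentially the same strategy as the paper's: establish the point-orbit distribution $[1,0,3,17]$ by direct computation on the representative $\pi_{1,0}$ over $\bF_4$, rule out $\Sigma_1,\dots,\Sigma_{10},\Sigma_{12}$ by comparing point-orbit distributions, and distinguish $\Sigma_{14}'$ from $\Sigma_{13}$ via inflexion points (three versus none, the latter coming from Lemma~\ref{pic}). Your argument is simply more explicit than the paper's, which defers the point count and the inflexion claim to the proof of Lemma~\ref{q>4}; your added observation that the three rank-$2$ points of $\Sigma_{14}'$ are collinear (on a line of type $o_{14}$) while those of $\Sigma_{13}$ are not gives a clean alternative separation that the paper does not spell out.
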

 \begin{proof}
 The first part is treated in the proof of Lemma \ref{q>4}. The second part follows from the difference of point-orbit distributions between $\Sigma_{14}'$ and $\Sigma_i$; $1\leq i\leq 12$ and the property that $\Sigma_{14}'$ has three inflexion points by Lemma \ref{q>4} while $\Sigma_{13}$ has none (see Lemma  \ref{inflexion} and Remark \ref{inflexionrmk}).
 \end{proof}

\subsection{Planes containing one rank-1 point and not spanned by points of rank at most 2}\label{sigma14primeorbit}

Let $\pi$ be a plane containing a unique point $Q_1$ of $\mathcal{V}(\Fq)$ and not spanned by points of rank at most $2$. Then, all rank-$2$ points in $\pi$ through $Q_1$ must lie on a unique line (such points exist by Lemma \ref{aux}) and each of the remaining $q$ lines through $Q_1$ must have $q$ points of rank three, and thus belongs to the line-orbit $o_9$ by \cite{lines}. Without loss of generality, let $\pi=\langle Q_1,Q_2,Q_3\rangle$ where $\langle Q_1,Q_3\rangle$ is the representative of $o_9$ in \cite[Table 2]{lines}.
 In particular, take $Q_1(1,0,0,0,0,0)$, $Q_3(0,0,1,1,0,0)$ and $Q_2(0,1,0,a,b,c)$ for some $a,b,c \in \Fq$, then $\pi$ can be represented by
$$\begin{bmatrix} x&y&z\\y&ay+z&by\\z&by&cy \end{bmatrix}.$$

Since points of rank at most two in $\pi$ lie on a line, it follows that the cubic curve $\mathscr{C}=\mathcal{Z}(X(b^2Y^2+acY^2+cYZ)+aYZ^2+cY^3+Z^3)$ associated with $\pi$ is a triple line. Hence, $a=b=c=0$ and the equation of $\mathscr{C}$ reduces to $Z^3=0$. This gives a unique orbit of planes intersecting the Veronese surface in one point and not spanned by points of rank at most two. We denote this orbit by $\Sigma_{15}$, which can be represented by

  $$\Sigma_{15}:\begin{bmatrix} x&y&z\\y&z&.\\z&.&.         \end{bmatrix}.$$

\begin{Lemma}
The point-orbit distribution of a plane in $\Sigma_{15}$ is $[1,1,q-1,q^2]$. In particular, $\Sigma_{15}\neq  \Sigma_i$ for $1\leq i\leq 14$.
\end{Lemma}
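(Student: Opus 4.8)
The plan is to compute the point-orbit distribution of the given representative $\pi_{15}$ directly, and then separate $\Sigma_{15}$ from the other fourteen orbits by invoking invariants already established. First I would determine the cubic curve $\mathscr{C}_{15}$ associated with $\pi_{15}$: setting $\det M_{\pi}=0$ for
\[
M_{\pi}=\begin{bmatrix} x&y&z\\ y&z&0\\ z&0&0\end{bmatrix}
\]
gives $\det M_\pi = -z^3 = z^3$ (characteristic two), so $\mathscr{C}_{15}=\mathcal{Z}(Z^3)$, a triple line. Thus the points of rank at most $2$ in $\pi_{15}$ are exactly the $q+1$ points parametrised by $(x,y,0)$, i.e. the line $\mathcal{Z}(Z)$. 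Among these I would check which have rank $1$: $M_{\pi}$ with $z=0$ is $\begin{bmatrix} x&y&0\\ y&0&0\\ 0&0&0\end{bmatrix}$, which has rank $1$ iff $y=0$, giving the single rank-$1$ point $(1,0,0,0,0,0)$ parametrised by $(1,0,0)$; this is consistent with $\pi_{15}$ meeting $\cV(\Fq)$ in exactly one point, as it must by construction. The remaining $q$ points on $\mathcal{Z}(Z)$ have rank $2$; of these, exactly the one with $x=0$, namely $(0,1,0,0,0,0)$ parametrised by $(0,1,0)$, has zero diagonal and hence lies in the nucleus plane $\mathcal{N}=\mathcal{Z}(Y_0,Y_3,Y_5)$. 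So $r_1=1$, $r_{2n}=1$, $r_{2s}=q-1$, and the remaining $q^2+q+1-(q+1)=q^2$ points have rank $3$, yielding the point-orbit distribution $[1,1,q-1,q^2]$.

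For the separation, comparing point-orbit distributions immediately rules out all $\Sigma_i$ except those sharing the tuple $[1,1,q-1,q^2]$. Scanning Table \ref{tableplanes}, the only other orbit with this distribution is $\Sigma_{11}$. So the crux is to show $\Sigma_{15}\neq\Sigma_{11}$. I expect this to be the main (and essentially only) obstacle, and I would resolve it using a line-orbit invariant. The cubic curve of $\pi_{15}$ is the triple line $\mathcal{Z}(Z^3)$, whereas the cubic curve of the representative of $\Sigma_{11}$, computed earlier in the paper as $\mathscr{C}_{11}=\mathcal{Z}(X^2Z+XY^2+Y^2Z)$, is a non-singular cubic (or at least not a triple line) — already a distinguishing $K$-invariant since the associated cubic curve is preserved by $K$. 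Alternatively, and perhaps more cleanly, one notes that $\pi_{15}$ is by construction \emph{not} spanned by points of rank at most $2$ (its rank-$\leq 2$ points all lie on the single line $\mathcal{Z}(Z)$), whereas $\Sigma_{11}$ was obtained in Section \ref{qwe} precisely as a plane spanned by points of rank at most $2$; being spanned by rank-$\leq 2$ points is a $K$-invariant property, so the two orbits are distinct.

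Concretely I would write: the first assertion follows from the computation above; for the second, $\Sigma_{15}\neq\Sigma_i$ for $i\neq 11$ by comparison of point-orbit distributions, and $\Sigma_{15}\neq\Sigma_{11}$ because a plane in $\Sigma_{11}$ is spanned by its points of rank at most $2$ (by the construction in Section \ref{qwe}) while a plane in $\Sigma_{15}$ is not (its points of rank at most $2$ all lie on a line). One subtlety to double-check is that the $q$ listed in Table \ref{tableplanes} excludes $q=2$, so I do not need to worry about small-field degeneracies here; another is to confirm that $\mathcal{N}\cap\pi_{15}$ really is the single point $(0,1,0,0,0,0)$ and not a line — this is clear since $\mathcal{N}=\mathcal{Z}(Y_0,Y_3,Y_5)$ forces $x=z=0$ in the parametrisation, leaving only $(0,1,0)$. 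With these checks the proof is complete.
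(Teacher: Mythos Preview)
Your proof is correct and follows essentially the same approach as the paper. The only organisational difference is that the paper uses the single observation ``$\Sigma_{15}$ is not spanned by points of rank at most $2$'' to distinguish $\Sigma_{15}$ from \emph{all} of $\Sigma_1,\ldots,\Sigma_{14}$ in one stroke (since each of those was constructed as a plane spanned by such points), whereas you first invoke point-orbit distributions and reserve the span argument only for the residual case $\Sigma_{11}$; both are valid, yours is just slightly more elaborate than necessary.
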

\begin{proof}

Clearly, $\Sigma_{15}\neq \Sigma_i$, for all $1\leq i \leq14$, as $\Sigma_{15}$ is not spanned by points of rank at most 2. Let $\pi_{15}$ be the above representative of $\Sigma_{15}$. Points of rank at most $2$ in $\pi_{15}$ correspond to points on the line $\cZ(Z)$, where only the point with homogeneous coordinates $(0,1,0,0,0,0)$ is contained in the nucleus plane which intersects $\pi_{15}$ in $\mathcal{Z}(X,Z)$. Therefore, the point-orbit distribution of a plane in $\Sigma_{15}$ is $[1,1,q-1,q^2]$.
\end{proof}

\subsection{Planes in $\PG(5,2)$}\label{q2planes}
Table \ref{tableplanes} is not completely correct under the action of $ \PGL(3,2)$. In particular, the orbits $\Sigma_1$,...,$\Sigma_{12}$ can be obtained analogously. However, the orbit $\Sigma_{13}$ does not exist for $q=2$. Furthermore, $\Sigma_{14}'$ can no longer be obtained by considering the span of a rank-1 point and a line of type $o_{14}$ as described in Section \ref{sigma14primeorbit}, since no such line exists in this case. More interestingly, planes meeting the Veronese surface non-trivially and not spanned by points of rank at most $2$ split under the action of $\PGL(3,2)$ into $\Sigma_{15}$ and $\Sigma_{15}'$ which is represented by 
$$\Sigma_{15}': \begin{bmatrix} x&y&z\\y&z&.\\z&.&y         \end{bmatrix}.$$

\begin{Remark}\label{q=2planes}
Over the field of two elements, the full setwise stabiliser $K'$ of the Veronese surface is isomorphic to $Sym_7$ (see Remark \ref{remq=2}) which strictly contains $\PGL(3,2)$ and does not preserve the nucleus plane. Under the action of this larger group $K'$ the number of orbits reduces to $5$. Precisely, we have $\Sigma_1=\Sigma_2$, $\Sigma_3=\Sigma_4=\Sigma_5$, $\Sigma_6=\Sigma_{10}$, $\Sigma_7=\Sigma_9=\Sigma_{12}$ and $\Sigma_{8}=\Sigma_{11}=\Sigma_{14}'=\Sigma_{15}=\Sigma_{15}'$, which is easy to verify either by hand or by using the FinInG package in GAP \cite{fining,GAP}.
\end{Remark}

\begin{Theorem}

There are $5$ $K'$-orbits of planes meeting $\cV(\mathbb{F}_2)$ is at least one point, where $K'\cong Sym_7$ is the group stabilising $\cV(\mathbb{F}_2)$. In particular, these orbits split under the action of $\PGL(3,2)$ into $15$ orbits as described in Remark \ref{q=2planes}.
 \end{Theorem}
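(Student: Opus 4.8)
The plan is to bootstrap from the $\PGL(3,2)$-classification. First I would record that, by the discussion in Section~\ref{q2planes} --- which runs the arguments of Sections~\ref{atleast3}--\ref{sigma14primeorbit} over $\mathbb{F}_2$, discarding $\Sigma_{13}$ (which does not occur) and adjoining $\Sigma_{14}'$, $\Sigma_{15}$ and $\Sigma_{15}'$ --- the planes of $\PG(5,2)$ meeting $\cV(\mathbb{F}_2)$ in at least one point fall into exactly $15$ orbits under $\PGL(3,2)$, with representatives $\Sigma_1,\dots,\Sigma_{12},\Sigma_{14}',\Sigma_{15},\Sigma_{15}'$. Since $\PGL(3,2)\le K'\cong\operatorname{Sym}_7$ (Remark~\ref{remq=2}), every $K'$-orbit is a union of these fifteen $\PGL(3,2)$-orbits, so the theorem reduces to exhibiting the fusion pattern of Remark~\ref{q=2planes} and checking that no further fusion occurs.

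Next I would isolate the basic $K'$-invariant $m(\pi):=|\pi\cap\cV(\mathbb{F}_2)|$, which is $K'$-invariant because $K'$ fixes the $7$-point set $\cV(\mathbb{F}_2)$ setwise. Since $\cV(\mathbb{F}_2)$ is a $7$-cap spanning $\PG(5,2)$ in which no four points are coplanar --- immediate once one checks that four Veronese images in general position, e.g.\ $\nu(e_1),\nu(e_2),\nu(e_3),\nu(e_1+e_2+e_3)$, are linearly independent, and that a coplanar quadruple of points of $\cV(\mathbb{F}_2)$ would have to consist of three points of a conic of $\cV(\mathbb{F}_2)$ together with a fourth point which cannot lie in the corresponding conic plane --- one gets $m(\pi)\in\{1,2,3\}$, with all three values occurring. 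This partitions the fifteen orbits into the three blocks $\{\Sigma_1,\Sigma_2\}$ ($m=3$), $\{\Sigma_3,\Sigma_4,\Sigma_5\}$ ($m=2$), and $\{\Sigma_6,\dots,\Sigma_{12},\Sigma_{14}',\Sigma_{15},\Sigma_{15}'\}$ ($m=1$), so distinct blocks lie in distinct $K'$-orbits. For $m(\pi)=3$ the plane is spanned by its three (necessarily non-collinear) Veronese points, and $\operatorname{Sym}_7$ is transitive on $3$-subsets of $\cV(\mathbb{F}_2)$, so $\Sigma_1\cup\Sigma_2$ is one $K'$-orbit. For $m(\pi)=2$, writing $\pi\cap\cV(\mathbb{F}_2)=\{P,Q\}$, one checks that of the $15$ planes through the secant $\langle P,Q\rangle$ exactly $10$ meet $\cV(\mathbb{F}_2)$ in no further point (the other five being $\langle P,Q,R\rangle$ for the five remaining Veronese points $R$), and that the stabiliser $\operatorname{Sym}_2\times\operatorname{Sym}_5$ of $\{P,Q\}$ in $K'$ acts transitively on these $10$ planes; with transitivity of $\operatorname{Sym}_7$ on secants this shows $\Sigma_3\cup\Sigma_4\cup\Sigma_5$ is one $K'$-orbit.

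It then remains to sort the ten $m=1$ orbits. Here I would realise $K'$ concretely inside $\PGL(6,2)$ by adjoining to $\PGL(3,2)$ a single permutation of the seven Veronese points that is not a collineation of the Fano plane; such a permutation extends uniquely to an element of $\PGL(6,2)$, because the $7$ spanning points of $\PG(5,2)$ carry a $1$-dimensional space of linear dependencies which $\operatorname{Sym}_7$ permutes and hence fixes. Tracking the ten representatives under this extra generator yields the fusions $\Sigma_6\sim\Sigma_{10}$, $\Sigma_7\sim\Sigma_9\sim\Sigma_{12}$ and $\Sigma_8\sim\Sigma_{11}\sim\Sigma_{14}'\sim\Sigma_{15}\sim\Sigma_{15}'$, and that the three resulting classes are pairwise distinct is confirmed by comparing orbit sizes (read off from stabiliser orders) or a suitable $K'$-invariant such as the $K'$-orbit distribution of the seven lines of $\pi$. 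Together with the two orbits above this gives exactly five $K'$-orbits, and the stated splitting under $\PGL(3,2)$. All of these checks are finite and routine, and can be carried out with the \texttt{FinInG} package in GAP; Remark~\ref{q=2planes} records the outcome.

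The main obstacle is precisely the $m=1$ case: the rank of a point is not a $K'$-invariant --- the determinantal cubic hypersurface of symmetric $3\times 3$ matrices is preserved only by $\PGL(3,2)$, not by the larger point-set stabiliser $K'$ --- so there is no cheap geometric way to separate the ten $\PGL(3,2)$-orbits into their $K'$-classes; one genuinely needs an explicit handle on $K'\setminus\PGL(3,2)$ together with a bookkeeping computation, which is why the cleanest route is a machine verification.
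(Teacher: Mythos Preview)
Your proposal is correct and, in fact, more detailed than what the paper offers. The paper does not give a proof of this theorem beyond Remark~\ref{q=2planes}, which simply asserts the five $K'$-orbits and the fusion pattern, stating that this ``is easy to verify either by hand or by using the FinInG package in GAP''. Your approach agrees with this for the difficult $m=1$ block (where you, too, defer to an explicit computation with a generator of $K'\setminus\PGL(3,2)$), but you add genuine conceptual content for $m=2$ and $m=3$ that the paper omits entirely: the use of $m(\pi)=|\pi\cap\cV(\mathbb{F}_2)|$ as a $K'$-invariant, the transitivity of $\operatorname{Sym}_7$ on $3$-subsets for $m=3$, and the frame argument implicit in your $m=2$ count. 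One small point worth making explicit in the $m=2$ step: the images of the five remaining Veronese points in the quotient $\PG(5,2)/\langle P,Q\rangle\cong\PG(3,2)$ form a frame (this follows from the unique linear dependency $\sum_i P_i=0$ among the seven Veronese points), so $\operatorname{Sym}_5$ acts as the full frame stabiliser and is transitive on the ten non-frame points, which are exactly the ten planes with $m=2$; this justifies the transitivity you assert.
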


\section{Comparison with the $q$ odd case}\label{comparison22}

Over finite fields of odd characteristic, there exists a polarity of $\PG(5,q)$ that maps the set of conic planes of $\mathcal{V}(\Fq)$ onto the set of tangent planes of $\mathcal{V}(\Fq)$ (see e.g. \cite[Theorem 4.25]{galois geometry}). This allows the correspondence between rank-1 nets of conics in $\PG(2,q)$, namely, nets with at least one double line, and planes in $\PG(5,q)$ meeting $\cV(\Fq)$ in at least one point, $q$ odd. This correspondence fails over finite fields of characteristic $2$. For instance, let $\pi_6$ be the representative of $\Sigma_6$ defined in Table \ref{tableplanes}. Then, $\pi_6$ meets $\cV(\Fq)$ in a unique point, however its associated net of conics $\mathcal{N}_6$ defined by 
\begin{equation}
\alpha XY+\beta XZ+\gamma (Y^2+cYZ+Z^2)=0
\end{equation}
 has, by Lemma \ref{lem:non-singular}, $q+1$ pairs of real lines defined by the pencil
 $$\mathcal{Z}(XY,XZ),$$ 
 and a unique pair of conjugate imaginary lines given by 
 $$\mathcal{Z}(Y^2+cYZ+Z^2).$$
 Therefore, the hyperplane-orbit distribution of $\pi_6$ is $[0,q+1,1,q^2-1]$, and $\Sigma_6$ has no double lines. In other words, $\mathcal{N}_6$ is not a rank-1 net of conics.

\begin{Corollary}
Rank-1 nets of conics in $\PG(2,q)$ do not correspond to planes having at least one rank-1 point in $\PG(5,q)$ for $q$ even.
\end{Corollary}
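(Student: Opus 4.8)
The plan is to refute the correspondence by exhibiting a single explicit counterexample, furnished by the orbit $\Sigma_6$ constructed in Section~\ref{qwe}. First I would recall the precise content of the correspondence: for $q$ odd, the polarity of $\PG(5,q)$ that interchanges conic planes and tangent planes of $\cV(\Fq)$ sends a plane $\pi$ to a net of conics $\mathcal{N}_\pi$ in $\PG(2,q)$, and under this identification $\pi$ meets $\cV(\Fq)$ in at least one point if and only if $\mathcal{N}_\pi$ is a rank-$1$ net, i.e.\ has at least one double line. Hence, to show that no such correspondence survives in characteristic $2$, it is enough to produce one plane $\pi$ of $\PG(5,q)$, $q$ even, that meets $\cV(\Fq)$ and whose associated net $\mathcal{N}_\pi$ contains no double line.

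For this I would take $\pi_6$, the representative of $\Sigma_6$ in Table~\ref{tableplanes}, with $\operatorname{Tr}(c^{-1})=1$; in particular $c\neq0$. By Lemma~\ref{sigma6} its point-orbit distribution is $[1,0,q+1,q^2-1]$, so $\pi_6$ meets $\cV(\Fq)$ in exactly one point. A short computation of the polar plane $\pi_6^\perp$ (done in the discussion preceding the statement) shows that the associated net $\mathcal{N}_6$ is the linear system
\[
\alpha XY+\beta XZ+\gamma\,(Y^2+cYZ+Z^2)=0,\qquad (\alpha,\beta,\gamma)\in\PG(2,q).
\]
I would then argue that $\mathcal{N}_6$ has no double line: over a field of characteristic $2$ a conic is a double line precisely when the coefficients of $XY$, $XZ$ and $YZ$ all vanish, and for a member of $\mathcal{N}_6$ this forces $\alpha=\beta=\gamma c=0$, hence $(\alpha,\beta,\gamma)=(0,0,0)$ since $c\neq0$, which is absurd. (If a finer description is wanted, Lemma~\ref{lem:non-singular} shows the singular members of $\mathcal{N}_6$ to be exactly the $q+1$ pairs of real lines of the pencil $\cZ(XY,XZ)$ together with the single pair of conjugate imaginary lines $\cZ(Y^2+cYZ+Z^2)$, the latter being irreducible over $\Fq$ by Lemma~\ref{quadratic} precisely because $\operatorname{Tr}(c^{-1})=1$; this gives the hyperplane-orbit distribution $[0,q+1,1,q^2-1]$ of $\pi_6$.)

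Combining the two points yields the Corollary: $\pi_6$ meets $\cV(\Fq)$, yet $\mathcal{N}_6$ is not a rank-$1$ net, so the two families of objects cannot be matched up as they are for $q$ odd. I would close by indicating the structural reason for the failure: in characteristic $2$ the locus in $\PG(5,q)$ of points representing double-line conics is the plane $\cZ(Y_1,Y_2,Y_4)$, which meets $\cV(\Fq)$ in only the three diagonal rank-$1$ points, so---unlike the odd case, where that locus is $\cV(\Fq)$ itself---no polarity of $\PG(5,q)$ can carry it onto $\cV(\Fq)$. The only real obstacle in this argument is conceptual: one has to fix what ``correspond'' means so that a single counterexample is genuinely decisive; the verification for $\Sigma_6$ itself is entirely routine, and in fact already appears in the paragraph preceding the statement.
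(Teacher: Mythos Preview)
Your proposal is correct and follows essentially the same approach as the paper: both exhibit $\pi_6\in\Sigma_6$ as a plane meeting $\cV(\Fq)$ whose associated net $\mathcal{N}_6=\langle XY,\,XZ,\,Y^2+cYZ+Z^2\rangle$ contains no double line, and both record the resulting hyperplane-orbit distribution $[0,q+1,1,q^2-1]$. Your direct check that a double line in characteristic~$2$ must have all mixed coefficients zero is a slight streamlining of the paper's argument via Lemma~\ref{lem:non-singular}, and your closing structural remark about the locus $\cZ(Y_1,Y_2,Y_4)$ is a welcome addition not present in the paper.
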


 \section{Conclusion}

In this paper, we classified and characterized planes in $\PG(5,q)$ intersecting the Veronese surface $\cV(\Fq)$ in at least one point under the action of the group stabilising $\cV(\Fq)$, $q$ even. This complements the work of the second author, T. Popiel and J. Sheekey \cite{nets} in which such planes were classified for $q$ odd. Our results contributes towards completing the classification of nets of conics in $\PG(2,q)$, $q$ even. In future work, we hope to classify planes in $\PG(5,q)$ which are disjoint from $\cV(\Fq)$. Such a classification would complete the longstanding open problem of classifying nets of conics over finite fields.

\end{document}